\theoremstyle{definition}
\newtheorem{theorem}{Theorem}[section]
\newtheorem{lemma}[theorem]{Lemma}
\newtheorem{proposition}[theorem]{Proposition}
\newtheorem{corollary}[theorem]{Corollary}
\newtheorem{remark}[theorem]{Remark}
\newtheorem{example}[theorem]{Example}
\newtheorem{conjecture}{Conjecture}
\title{Eigenvalues and Eigenfunctions of Double Layer Potentials}
\author{{\Large Yoshihisa Miyanishi and Takashi Suzuki} \vspace{2mm}\\
Department of Systems Innovation, \\ Graduate School of Engineering Science,\\ Osaka University \\
Toyonaka 560-8531, Japan}
\begin{document}
\maketitle
\begin{abstract} 
Eigenvalues and eigenfunctions of two- and three-dimensional double layer potentials are considered. 
Let  $\Omega$ be a $C^2$ bounded region in $\mathbf{R}^n$ $(n=2, 3)$. The double layer 
potential $K: L^2(\partial \Omega) \rightarrow L^2(\partial \Omega) $ is defined by     
$$
(K \psi)(x) \equiv  \int_{\partial \Omega} \psi(y)\cdot \nu_{y} E(x, y) \; ds_y,  
$$
where
$$
E(x, y)=
\begin{cases} 
\frac{1}{\pi} \log\frac{1}{|x-y|} \quad \mbox{if}\; n=2, \\
\frac{1}{2\pi} \frac{1}{|x-y|} \quad\hspace{4mm} \mbox{if}\; n=3,  
\end{cases}
$$ 
$ds_y$ is the line or surface element and $\nu_y$ means the outer normal derivative on $\partial \Omega$. 
It is known that $K$ is a compact operator on $L^2(\partial \Omega)$ and consists of at most a countable 
number of eigenvalues, with $0$ the only possible limit point.
The aim of this paper is to establish some relationships between eigenvalues, 
eigenfunctions and the geometry of $\partial\Omega$.  
\end{abstract}
\renewcommand{\thefootnote}{}
\footnote{2010 {\it Mathematics Subject Classification:} Primary 47G40, Secondary 34L20.} 
\footnote{{\it Key words and phrases:} double layer potential, eigenvalues, eigenfunctions, nodal sets}
\section{Introduction and Results}
\par\hspace{5mm} 
Let $\Omega$ be a $C^2$ bounded region in $\mathbf{R}^n\; (n=2, 3)$. 
Consider the double layer 
potential $K: L^2(\partial \Omega) \rightarrow L^2(\partial \Omega) $:    
$$
(K \psi)(x) \equiv \int_{\partial \Omega} \psi(y)\cdot 
\nu_{y} E(x, y) \; ds_y,  
$$
where
$$
E(x, y)=
\begin{cases} 
\frac{1}{\pi} \log\frac{1}{|x-y|} \quad \mbox{if}\; n=2, \\
\frac{1}{2\pi} \frac{1}{|x-y|} \quad\hspace{4.5mm} \mbox{if}\; n=3,  
\end{cases}
$$ 
$ds_y$ is the line or surface element 
and $\nu_y$ means the outer normal derivative on $\partial \Omega$. 
We know that $K$ is a compact operator 
on $L^2(\partial \Omega)$ and consists of at most a countable 
number of eigenvalues, with $0$ the only possible limit point. 
It is also known that the eigenvalues of the double layer 
potential integral operator lie in the interval $[-1, 1)$ 
and the eigenvalue $-1$ corresponds to constant 
eigenfunctions {(See \cite{Pl} and see also \cite{Ta} 
for some recent progress)}. 
\par
We set the ordered eigenvalues and eigenfunctions 
counting multiplicities by      
$$
\sigma_p(K)
=\{\; \lambda_j \mid |\lambda_0|> |\lambda_1| \geq |\lambda_2|\geq\; \cdots\}, 
$$ 
where 
$$
K e_{\lambda_j}(x) =\lambda_j e_{\lambda_j}(x). 
$$
\par 
Recall also that every compact operator $K$ on Hilbert space takes the 
following canonical form 
$$
K\psi  = \sum_{j=1}^{\infty} \alpha_{j} \langle \psi, v_j \rangle  u_j 
$$ 
for some orthonormal basis $\{u_j\}$ and $\{ v_j\}$, 
where $\alpha_i$ are sigular values of $K$ (i.e. the eigenvalues of $(K^*K)^{1/2}$)  
and $\langle \cdot, \cdot \rangle$ means the $L^2(\partial \Omega)$ inner product. 
The singular values are non-negative and we denote the ordered singular values by 
$$
\sigma_{sing}(K)=\{\; \alpha_j \mid \alpha_1\geq \alpha_2 \geq \alpha_3 \geq\; \cdots \}. 
$$
With this in mind, our main concerns are two natural questions: 
\vspace{2mm}\par
(i)\;\; What can we say about the geometry of $\partial \Omega$ given the eigen or singular values? 
\par
(ii)\; What can we say about the eigenvalues, singular values and eigenfunctions given the geometry?   
\vspace{2mm}\par
What we will attempt to prove in this paper are some selected aspects of those two questions: 
isoperimetric eigen and singular value problems, decay rates of eigen and singular values, 
and nodal sets of eigenfunctions. 
Note that these questions are taken from the questions of the 
spectral geometry for elliptic operators. As will be mentioned in the last of this section, 
there are many studies in this direction.  
In other words our aim is to develop the spectral geometry of double layer potentials. 
\par
For this purpose, in \S 2 we start studying two questions for two dimensional double layer potentials: 
\par (Q1)\ What types of eigen and singular values give the isoperimetric propertiy 
of $\partial \Omega$? 
\par
(Q2)\ What types of sequences can occur as eigen and singular values? 
\par An answer of (Q1) is given by:
\par
\setcounter{section}{2}
\setcounter{theorem}{6}
\begin{theorem}
Let $n=2$ and $\Omega$ be a simply connected region with $C^2$ boundary. Then 
$$\sigma_{sing}(K)\backslash\{0\}=\{1 \}\ \mbox{is necessary and sufficient for}\ \partial \Omega=S^1.$$ 
\end{theorem}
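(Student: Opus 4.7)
When $\partial\Omega$ is a circle of radius $R$ one has $\nu_y=y/R$, and a direct computation gives $(y-x)\cdot\nu_y/|x-y|^{2}=1/(2R)$ for every pair of distinct boundary points. The integral kernel $k(x,y)=\partial_{\nu_y}E(x,y)$ of $K$ is therefore the constant $-1/(2\pi R)=-1/|\partial\Omega|$, and $K\psi=-|\partial\Omega|^{-1}\int_{\partial\Omega}\psi\,ds$ is a rank-one operator whose only nonzero singular value is $1$ (attained on the constants, consistent with the eigenvalue $-1$ quoted in the introduction).

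\textbf{Necessity.} Suppose $\sigma_{sing}(K)\setminus\{0\}=\{1\}$. The plan is to show that $K$ has rank one with identically constant integral kernel, and then to invoke a geometric characterisation of the circle. Since the nonzero spectrum of $K^{*}K$ is contained in $\{1\}$, $K^{*}K$ is an orthogonal projection; because $K$ is Hilbert--Schmidt on a $C^{2}$ boundary (the kernel extends continuously to the diagonal, with value $-\kappa(x)/(2\pi)$), this projection has finite rank $r=\operatorname{rank}(K)$ and $\|K\|_{\mathrm{HS}}^{2}=r$. On the other hand, $K\cdot 1=-1$ gives $\int_{\partial\Omega}k(x,y)\,ds_y=-1$ for every $x$, and Cauchy--Schwarz yields
\[
|\partial\Omega|^{2}=\Big(\iint k(x,y)\,ds_xds_y\Big)^{2}\leq |\partial\Omega|^{2}\,\|K\|_{\mathrm{HS}}^{2},
\]
so $r=\|K\|_{\mathrm{HS}}^{2}\geq 1$, with equality if and only if $k$ is constant on $\partial\Omega\times\partial\Omega$. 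Once $r=1$ is known, this equality forces $k(x,y)\equiv -1/|\partial\Omega|$; writing $R=|\partial\Omega|/(2\pi)$ this reduces to the geometric identity $|x-y|^{2}=2R\,(y-x)\cdot\nu_y$ for distinct $x,y\in\partial\Omega$, which for each fixed $y$ is the equation of the circle of radius $R$ centered at $y-R\nu_y$; imposing it for every $y\in\partial\Omega$ forces $\partial\Omega$ to coincide with that circle.

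\textbf{Main obstacle.} The crux is establishing $r=1$. To rule out $r\geq 2$ I would combine the trace identity $\operatorname{tr}(K)=-\tfrac{1}{2\pi}\int_{\partial\Omega}\kappa\,ds=-1$ (Gauss--Bonnet applied to the diagonal value of the kernel) with the facts from the introduction that the eigenvalues of $K$ lie in $[-1,1)$ and that $-1$ is a simple eigenvalue. In the canonical form $K=UV^{*}$ with orthonormal systems $\{u_j\}_{j=1}^{r},\{v_j\}_{j=1}^{r}$, the nonzero eigenvalues of $K$ are the eigenvalues of the $r\times r$ Gram matrix $V^{*}U$, whose spectral radius is at most $1$. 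These abstract constraints are a priori compatible with $r\geq 2$ (for instance a $2\times 2$ matrix with eigenvalues $-1,0$), so some additional input specific to the Neumann--Poincar\'e structure---such as a finite-rank classification of the double-layer operator via Fredholm eigenvalues, or a conformal-mapping argument identifying the disk as the unique simply connected $C^{2}$ planar domain on which $K$ has finite rank---will be required to close the argument.
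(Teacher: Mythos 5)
Your proof follows essentially the same route as the paper, and your core computation is correct: the Cauchy--Schwarz inequality applied to the identity $\int_{\partial\Omega}k(x,y)\,ds_y=-1$, with equality forcing $k$ to be constant, is precisely the paper's proof of its Theorem~2.6, which the paper writes instead as a completing-the-square identity
$$
{\rm tr}(K^*K)=\frac{1}{\pi^2}\int\!\!\int_{\partial\Omega\times\partial\Omega}\bigl|\nu_y\log|x-y|-C\bigr|^2\,ds_x\,ds_y+1,\qquad C=\pi/|\partial\Omega|,
$$
and the continuity of the kernel up to the diagonal (with value $-\kappa/2\pi$) then gives constant curvature, hence a circle. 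Your sufficiency direction and the geometric reduction ``$k\equiv -1/|\partial\Omega|\Rightarrow\partial\Omega$ is a circle'' are also fine.

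The issue is exactly where you flag it: you read $\sigma_{sing}(K)\setminus\{0\}=\{1\}$ as a \emph{set} condition, so you must rule out a nonzero singular value of multiplicity $\geq 2$, and you explicitly leave that open. The paper, however, sets up $\sigma_{sing}(K)=\{\alpha_j\mid\alpha_1\geq\alpha_2\geq\cdots\}$ as the \emph{ordered list with multiplicities} (mirroring its definition of $\sigma_p(K)$), under which the hypothesis says there is exactly one nonzero singular value, equal to $1$, so ${\rm tr}(K^*K)=\sum_j\alpha_j^2=1$ and one is immediately in the equality case of Theorem~2.6. That is the entirety of the paper's passage from Theorem~2.6 to Theorem~2.7 (``Suppose ${\rm tr}(K^*K)=1$. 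Then only one singular value takes $1$, otherwise $\alpha_j=0$.''). Under this convention your ``main obstacle'' does not arise at all, and no rank classification is needed.

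If one insists on the set interpretation, your argument is genuinely incomplete as written, but the missing ingredient you conjecture is in fact the one the paper itself points to: immediately after Corollary~2.8 the authors record that, by Shapiro \cite{S}, the disk is the only planar domain for which $K$ has finite rank. Since under your hypothesis $K^*K$ is a finite-rank projection (as $K$ is Hilbert--Schmidt), $K$ has finite rank and that classification closes the argument directly, without any need for the trace identity ${\rm tr}(K)=-1$ or the Fredholm-eigenvalue bookkeeping you sketch. So: same method, a genuine (self-acknowledged) gap under your reading, and the needed external fact is one the paper already cites.
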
 
It follows that 
$(K\psi)(x)=0$ for all $\psi(x)\in L^2_0(\partial \Omega)$, then $\partial \Omega=S^1$ (See Corollary 2.8).
There are some proofs of this theorem (See e.g. \cite{Li}). 
In \S 2.1, a short alternative proof is given 
by using Hilbert-Schmidt norm of $K$. For $\partial \Omega=S^1$, 
$K$ has an eigenvalue $-1$ of multiplicty $1$ and an eigenvalue $0$ of 
infinite countable multiplities (See \cite{Ah1} and \S 2.1).   
So the condition $\sigma_{sing}(K)\backslash \{0\}=\{1\}$ can be 
replaced by $\sigma_{sing}(K)=\{1, 0\}$. 
Moreover the theory of quasi-conformal mapping states that 
$\sigma_{p}(K)=\{-1, 0\}$ is also necessary and sufficient for 
$\partial \Omega=S^1$. We mention about this result 
in the last of \S2.1. 
\par 
To answer (Q2), we consider Schatten norm of $K$ and estimate a decay rate
\renewcommand{\thefootnote}{[1]}
\footnote[1]{When considering the asymptotics of eigenvalues and eigenfunctions, 
we henceafter assume that the eigenvalues counting multiplisities are infinite.}  
of eigen and singular values by the regularity of $\partial \Omega$:  
\setcounter{section}{2}
\setcounter{theorem}{11}
\begin{theorem}
Let $n=2$ and $\Omega$ be a $C^{k}$ ($k\geqq 2$) region. For any $\alpha >-2k+3$,   
$$
\alpha_j=o(j^{\alpha/2})\ \mbox{and}\ \lambda_j = o(j^{\alpha/2}) \quad \text{as}\; j\rightarrow \infty  
$$
where $o$ means the small order. 
\end{theorem}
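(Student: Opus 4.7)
Parametrize $\partial\Omega$ by arclength (rescaled so that $|\partial\Omega|=2\pi$), identify the boundary with $S^1$, and view $K$ as an integral operator on $L^2(S^1)$ with kernel
$$k(s,t)=\frac{1}{\pi}\,\frac{\nu(t)\cdot(x(s)-x(t))}{|x(s)-x(t)|^2}.$$
The plan has three steps: (a) show that $k\in C^{k-2}(S^1\times S^1)$; (b) deduce Schatten membership $K\in S_r$ for every $r>2/(2k-3)$; (c) extract the decay $\alpha_j,\,|\lambda_j|=o(j^{\alpha/2})$ for every $\alpha>-2k+3$.

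For step (a), pass to complex notation $z(s)=x_1(s)+ix_2(s)$, so that $\nu(t)=-iz'(t)$ and
$$k(s,t)=\frac{1}{\pi}\operatorname{Im}\!\left(\frac{z'(t)}{z(s)-z(t)}\right).$$
Write $z(s)-z(t)=(s-t)\,h(s,t)$ with $h(s,t):=\int_0^1 z'(t+u(s-t))\,du\in C^{k-1}(S^1\times S^1)$ and $h(t,t)=z'(t)\neq 0$, so that $q(s,t):=z'(t)/h(s,t)$ is $C^{k-1}$ with $q(t,t)=1$ real. Hence $\operatorname{Im} q$ vanishes on the diagonal, and Hadamard's lemma gives $\operatorname{Im} q(s,t)=(s-t)\,r(s,t)$ with $r\in C^{k-2}(S^1\times S^1)$, whence $k=r/\pi\in C^{k-2}\subset H^{k-2}(S^1\times S^1)$. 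This is the main technical step: a naive argument would only produce $C^{k-3}$, and the extra derivative comes from the cancellation of the apparent simple pole at $s=t$ by the imaginary part, since the pole has a real residue.

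For step (b), assume first $k\geq 3$, set $s_0=k-2$, and let $\Lambda=(1-\partial_\sigma^2)^{1/2}$ on $L^2(S^1)$. Since $k\in H^{s_0}(S^1\times S^1)$, applying $\Lambda^{s_0}$ in the first variable produces an operator $\Lambda^{s_0}K$ with an $L^2$ kernel, so $\Lambda^{s_0}K\in S_2$. The self-adjoint operator $\Lambda^{-s_0}$ has singular values $(1+n^2)^{-s_0/2}$, hence belongs to $S_p$ for every $p>1/s_0$. Hölder's inequality for Schatten norms, applied to $K=\Lambda^{-s_0}(\Lambda^{s_0}K)$, yields $K\in S_r$ for every $r>1/(s_0+\tfrac12)=2/(2k-3)$; the endpoint case $k=2$ is immediate, since a continuous kernel on a compact product lies in $L^2$, so $K$ is Hilbert--Schmidt. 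For step (c), the monotonicity of $\{\alpha_j\}$ together with $\sum_j\alpha_j^r<\infty$ gives $j\alpha_j^r\leq 2\sum_{j/2\leq i\leq j}\alpha_i^r\to 0$, whence $\alpha_j=o(j^{-1/r})$; letting $r\downarrow 2/(2k-3)$ yields $\alpha_j=o(j^{\alpha/2})$ for every $\alpha>-2k+3$. Finally, the Weyl majorant $\sum_{j\leq n}|\lambda_j|^r\leq\sum_{j\leq n}\alpha_j^r$ places $\{|\lambda_j|\}$ in $\ell^r$ as well, and the same tail argument gives $\lambda_j=o(j^{\alpha/2})$.
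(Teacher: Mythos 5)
Your argument is correct and follows the same overall skeleton as the paper's: kernel regularity $C^{k-2}$ across the diagonal, Schatten membership $K\in S_r$ for every $r>2/(2k-3)$, then Weyl's inequality and a tail-summation argument (the paper's Remark 2.5, which you redo explicitly for general $r$). The differences are in how the two key inputs are obtained. For the kernel regularity, the paper's Lemma 2.13 works in a local graph representation $\eta=F(\xi)$ and exhibits the kernel as a quotient with $C^{k-2}$ numerator (via Taylor remainders of $F$) and positive $C^{k-1}$ denominator; you instead use the complex form $\operatorname{Im}\bigl(z'(t)/(z(s)-z(t))\bigr)$ and Hadamard's lemma, exploiting that the residue of the apparent pole at $s=t$ is real -- the same cancellation, packaged differently, and equally valid provided you note that the factorization $z(s)-z(t)=(s-t)h(s,t)$ is only meant locally near the diagonal (off the diagonal the kernel is trivially $C^{k-1}$) and that the arclength parametrization of a $C^k$ curve is $C^k$, so $z'\in C^{k-1}$. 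For the Schatten step, the paper cites Delgado--Ruzhansky (Theorem 2.14, kernels in $H^{\mu_1,\mu_2}$ on a closed $n$-manifold give $S_r$ for $r>2n/(n+2(\mu_1+\mu_2))$, applied with $n=1$, $\mu_1+\mu_2=k-2$), whereas you reprove exactly the needed special case on $S^1$ by the factorization $K=\Lambda^{-(k-2)}\bigl(\Lambda^{k-2}K\bigr)$ with $\Lambda^{k-2}K$ Hilbert--Schmidt and $\Lambda^{-(k-2)}\in S_p$ for $p>1/(k-2)$, plus Schatten--H\"older, handling $k=2$ separately via the Hilbert--Schmidt bound (which is the paper's Lemma 2.3). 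Your route is more self-contained and elementary for this one-dimensional setting; the paper's citation has the advantage of covering general closed manifolds and fractional Sobolev orders, which it reuses in \S 4 for the three-dimensional case.
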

It follows that if $\Omega$ be a $C^{\infty}$ region, we have $\lambda_j = o(j^{-\infty})$. 
For an ellipse $\partial\Omega$, for instance,  
direct calculations give $\lambda_j = O(e^{-cj})$ (See \cite{Ah1}, \cite[\S 8.3]{KPS} 
and example 2.2).  
It should be emphasized that the ellipses are analytic curves and so 
the eigenvalues are presumed to have the stronger decay properties 
than the case of smooth curves. These viewpoints shed some new lights on 
eigenvalue asymptotics.
\par Few studies have focused on the eigenfunctions. 
In \S 3, we show a question for two dimensional double layer potentials: 
\par 
(Q3)\; What can we say about the nodal sets of eigenfunctions? 
\par 
Here we establish the holomorphic extention of $e_{\lambda_j}$ 
for analytic curves and give the growth of zeroes of analytic eigenfunctions:  
\setcounter{section}{3}
\setcounter{theorem}{9}
\begin{theorem}
Let $n=2$ and $\Omega$ be a real analytic region 
and $\{e_{\lambda_j}(x)\}\subset C^{\omega}(\partial \Omega)$ be real analytic eigenfunctions. 
There exists $C>0$, depending only on $\Omega$,  
such that the zeroes $N(e_{\lambda_j}(x))$ satisfy   
$$\sharp N(e_{\lambda_j}(x)) \leqq C |\log |{\lambda_j}||.$$
\end{theorem}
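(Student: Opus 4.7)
The plan is to prove this by combining the integral equation $K e_{\lambda_j} = \lambda_j e_{\lambda_j}$ with a Jensen-type zero-counting argument in a complex strip around $\partial\Omega$. Parametrize $\partial \Omega$ by arc length $s\mapsto\gamma(s)$, $s\in \mathbb{R}/L\mathbb{Z}$. Since $\partial \Omega$ is real analytic, $\gamma$ extends to a holomorphic map $\tilde\gamma$ on some strip $S_\delta=\{z\in\mathbb{C}/L\mathbb{Z} : |\mathrm{Im}\,z|<\delta\}$ with $\tilde\gamma$ injective and the holomorphic distance $(\tilde\gamma(z)-\gamma(s))\cdot(\tilde\gamma(z)-\gamma(s))$ nonvanishing for $|\mathrm{Im}\, z|$ small and $z\ne s$. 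Consequently, for any real analytic density $\psi$ the double-layer kernel $\nu_y\cdot E(x,y)$, viewed as a function of $s$ for $x=\tilde\gamma(z)$ fixed, admits a holomorphic extension in $z$ after a small contour deformation that keeps $y=\gamma(s)$ on $\partial\Omega$ and moves $z$ off the real axis. This is the mechanism that lets us holomorphically extend $(K\psi)(x)$.

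Using this, the key step is to write, for each eigenfunction,
\[
e_{\lambda_j}(x)=\frac{1}{\lambda_j}(Ke_{\lambda_j})(x)
=\frac{1}{\lambda_j}\int_{\partial\Omega} e_{\lambda_j}(y)\,\nu_y\cdot E(x,y)\,ds_y,
\]
and to interpret the right-hand side as the value at $x=\tilde\gamma(z)$ of a function holomorphic in $z\in S_{\delta'}$, for some $\delta'=\delta'(\Omega)>0$ independent of $j$. The $L^2$ inner product form gives the sup bound
\[
\bigl|\tilde e_{\lambda_j}(z)\bigr| \le \frac{1}{|\lambda_j|}\|e_{\lambda_j}\|_{L^2(\partial\Omega)}\cdot
\Bigl(\int_{\partial\Omega}\bigl|\nu_y\cdot E(\tilde\gamma(z),y)\bigr|^2 ds_y\Bigr)^{1/2}
\le \frac{C}{|\lambda_j|}\|e_{\lambda_j}\|_{L^2(\partial\Omega)}
\]
uniformly for $z\in S_{\delta'}$, where $C$ depends only on $\Omega$ and $\delta'$. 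After normalizing $\|e_{\lambda_j}\|_{L^2(\partial\Omega)}=1$, we have a holomorphic extension $\tilde e_{\lambda_j}$ on $S_{\delta'}$ with $\|\tilde e_{\lambda_j}\|_{L^\infty(S_{\delta'})}\le C/|\lambda_j|$, while its restriction to the real circle has $L^2$ norm equal to $1$.

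With these two bounds in hand, the zero count follows from a standard Jensen-type estimate for periodic holomorphic functions on strips: if $F$ is holomorphic on $S_{\delta'}$, $L$-periodic, with $\|F\|_{L^\infty(S_{\delta'})}\le M$ and $\|F\|_{L^2(\mathbb{R}/L\mathbb{Z})}\ge 1$, then the number of zeros of $F$ on one period of the real axis is bounded by $C(\delta',L)\log M$. (The proof is to choose $s_0\in[0,L]$ with $|F(s_0)|\gtrsim 1$, cover $\partial\Omega$ by $O(1)$ disks of radius $\delta'/4$, and apply Jensen's formula on disks of radii $\delta'/4<\delta'/2$ centered at points of each disk; the number of zeros in each disk is $\le \log(M/|F(s_0)|)/\log 2$.) Applying this with $M=C/|\lambda_j|$ gives
\[
\sharp N(e_{\lambda_j}) \le C'\bigl|\log|\lambda_j|\bigr|
\]
for $|\lambda_j|$ small, which is the stated bound.

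The main obstacle is the holomorphic extension step: one must verify that the kernel $\nu_y\cdot E(\tilde\gamma(z),y)$ extends holomorphically in $z$ with a uniform width $\delta'>0$, and that the $L^2$-in-$y$ norm of this kernel stays uniformly bounded as $z$ ranges over $S_{\delta'}$. This requires a careful analysis near the near-diagonal part where $\tilde\gamma(z)$ approaches the real curve, relying on the real-analyticity of $\gamma$ to ensure that the complexified squared distance $|\tilde\gamma(z)-\gamma(s)|^2$ has non-vanishing real part for $s$ in a neighborhood of $\mathrm{Re}\,z$ and only mild degeneration elsewhere; away from the diagonal the extension is immediate. Once this analytic-continuation input is secured, the Jensen argument is routine.
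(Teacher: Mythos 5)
Your overall strategy is essentially the one taken in the paper: complexify the arc-length parametrization of $\partial\Omega$ to an annulus/strip, use $e_{\lambda}=\lambda^{-1}Ke_{\lambda}$ and the holomorphic extension of the kernel (the paper's Proposition 3.7, following Millar) to extend $e_{\lambda}$ holomorphically with a uniform width, bound $\sup|e_{\lambda}^{\mathbb C}|\le C|\lambda|^{-1}\|e_{\lambda}\|_{L^2}$ by Cauchy--Schwarz, normalize, and then convert the sup bound together with the lower bound on $\|e_{\lambda}\|_{L^2(\partial\Omega)}$ into a count of real zeros. All of that matches.

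The one place where your argument as written does not close is the final zero-counting lemma. You choose a single $s_0$ with $|F(s_0)|\gtrsim 1$, cover the period by $O(1)$ disks of radius $\delta'/4$, and then assert that Jensen on each concentric disk of radius $\delta'/2$ gives at most $\log(M/|F(s_0)|)/\log 2$ zeros per disk. But Jensen on a disk $D(c_i,\delta'/2)$ requires a lower bound on $|F(c_i)|$ at the \emph{center} $c_i$ of that disk, not at $s_0$: the quantity controlled is $\log\bigl(M/|F(c_i)|\bigr)$, and a priori $F$ can be extremely small at every $c_i\ne s_0$. To propagate the single good point to all the disks one needs an additional argument --- a Harnack chain for $\log|F|$, a two-constants/Hadamard-three-circles step, or, equivalently, the potential-theoretic argument via the Green's function of the annulus and the maximum principle that the paper uses in Proposition 3.9 (this is exactly what the Donnelly--Fefferman lemma supplies: it bounds the number of zeros in the \emph{entire} sub-annulus by $C(\epsilon)\max|\log|e_{\lambda}^{\mathbb C}||$, precisely because the Green's function is bounded away from $0$ from below on the whole sub-annulus, not just near one point). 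Without such a propagation step your per-disk Jensen bound has an uncontrolled right-hand side. This is a genuine missing ingredient in your sketch, though a known repair exists and does not change the structure of the proof.

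A smaller point: in the Cauchy--Schwarz step you should be explicit that the constant $C_{A(\epsilon)}$ bounding $\int_{\partial\Omega}\bigl|\nu_y\cdot E(\tilde\gamma(z),y)\bigr|^2 ds_y$ uniformly over $z\in S_{\delta'}$ comes from the fact that in dimension two the complexified kernel $\bigl[q'(s)/(q(s)-q^{\mathbb C}(t))-\bar q'(s)/(\bar q(s)-q^{\mathbb C*}(t))\bigr]$ remains bounded as $t\to s$ (the $1/(s-t)$ singularities cancel); this is what the paper verifies in the proof of Proposition 3.7, and it is the reason the extension is uniform in $j$.
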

From Theorem 3.10 one can expect that the positive eigenfunctions correspond to   
eigenvalue $-1$, so the positive eigenfunction is constant. 
This fact holds true even for a much more general case (See Theorem 3.1). 
\vspace{3mm}\par 
\setcounter{section}{1}
\setcounter{theorem}{6}
\par 
Apart from $n=2$, for the case of $n=3$, the analogy of the above theorems 
is difficult to handle. So we shall discuss only some remarks and conjectures 
in \S 4. The behavior of $\sigma_p(K)$, for instance, changes to be drastic:   
\setcounter{section}{4}
\setcounter{theorem}{0}
\begin{remark}
Let $n=3$ and $\Omega$ be a smooth region. For $\alpha>-\frac{1}{2}$, 
we have 
$$
\lambda_{j}=o({j^{\alpha}}) \quad \text{as}\; j\rightarrow \infty. 
$$
\end{remark}
\par 
For $n=3$, no satisfactory answer of isoperimetric eigenvalue problems 
has been found yet. Instead, in the context of studying the eigenvalue problems 
we propose reasonable conjectures:  
\setcounter{section}{3}
\setcounter{theorem}{2}
\begin{conjecture} Let $n=3$ and $\underline{\lambda} \equiv 
{\rm min}\; \sigma_p(K)\backslash \{-1\}$.  
We have  
$$
\sup\limits_{\partial \Omega} \underline{\lambda}=-\frac{1}{3}  
$$
where the supremum is taken over all $C^{\infty}$ simply 
connected closed surfaces. The supremum is achieved if and only if $\partial \Omega=S^2$. 
\end{conjecture}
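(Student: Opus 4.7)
My strategy is to combine the Plemelj symmetrization of $K$ with a trial-function argument using the coordinate functions $x_1,x_2,x_3$, thereby reducing the conjecture to a classical isoperimetric inequality for the electrostatic polarization tensor together with a Serrin-type rigidity result in the exterior domain.

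Write $S$ for the single layer operator on $\partial\Omega$ with kernel $E$. The classical identity $SK^*=KS$ makes $\tilde K=S^{-1/2}KS^{1/2}$ self-adjoint on $L^2(\partial\Omega)$, so $K$ has real spectrum and its eigenvalues admit a Rayleigh-quotient characterization with respect to the bilinear form $\langle\phi,\psi\rangle_*=\langle\phi,S^{-1}\psi\rangle_{L^2}$. Since the $-1$-eigenspace is spanned by the constants,
\[
\underline{\lambda}\;=\;\inf_{\phi\perp_* 1}\frac{\langle K\phi,S^{-1}\phi\rangle}{\langle\phi,S^{-1}\phi\rangle}.
\]
A direct computation on $S^2$ via spherical harmonics gives eigenvalues $-1/(2\ell+1)$, so that $\underline{\lambda}(S^2)=-1/3$ is attained by the $\ell=1$ modes $x_1,x_2,x_3$.

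For a general $\Omega$, after translating so that the $S^{-1}(1)$-weighted centroid is at the origin, the functions $\phi_a(x)=a\cdot x$ are $\langle\cdot,\cdot\rangle_*$-orthogonal to the constants. Applying Green's representation to a function $v$ harmonic in $\Omega$ and passing to the boundary yields the identity
\[
(I+K)(v|_{\partial\Omega})\;=\;S(\partial_\nu v|_{\partial\Omega}),
\]
which for $v=a\cdot x$ becomes $K\phi_a=S(a\cdot\nu)-\phi_a$. Combining with the divergence-theorem identity $\int_{\partial\Omega}(a\cdot\nu)(a\cdot x)\,ds=|a|^2|\Omega|$, a short computation shows
\[
R(\phi_a)\;=\;\frac{|a|^2|\Omega|}{\langle\phi_a,S^{-1}\phi_a\rangle}-1\;=\;\frac{1-t}{1+t},\qquad t\;=\;\frac{a^T W\,a}{|a|^2|\Omega|},
\]
where $W_{ij}=\int_{\Omega^c}\nabla u^i\cdot\nabla u^j\,dy$ and $u^i$ is the exterior harmonic function with boundary trace $x_i$ decaying at infinity (so that $\langle\phi_a,S^{-1}\phi_a\rangle=\tfrac12|a|^2|\Omega|+\tfrac12 a^TWa$ by a polarization identity). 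Thus $R(\phi_a)\le -1/3$ is equivalent to $a^TWa\ge 2|a|^2|\Omega|$, and the bound $\underline{\lambda}\le -1/3$ follows from $\lambda_{\max}(W)\ge 2|\Omega|$.

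A further Green-identity computation, using the far-field expansion $u^i(x)\sim P_{ij}x_j/|x|^3$, yields $W=4\pi P-|\Omega|I$, where $P$ is the electrostatic polarization tensor of $\Omega$ regarded as a perfect conductor. Hence $\lambda_{\max}(W)\ge 2|\Omega|$ is implied by $\operatorname{tr}(P)\ge 9|\Omega|/(4\pi)$, which is the classical P\'olya--Szeg\H o/Schiffer isoperimetric inequality for conductor polarization, with equality if and only if $\partial\Omega$ is a sphere. For the rigidity direction, if $\underline{\lambda}(\Omega)=-1/3$ then the chain of inequalities forces $\phi_a$ to be an eigenfunction with eigenvalue $-1/3$ in some direction $a$; the identity then forces $u=S(a\cdot\nu)=\tfrac{2}{3}a\cdot x$ in $\Omega$, while $u$ in $\Omega^c$ solves an overdetermined exterior problem with Dirichlet value $\tfrac{2}{3}a\cdot x$, Neumann value proportional to $a\cdot\nu$, and decay at infinity. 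A Serrin-type symmetry theorem in the exterior then forces $\partial\Omega$ to be a round sphere.

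The hardest parts, I expect, are (i) a clean statement and proof of the polarization-tensor isoperimetric inequality in the generality required here (arbitrary $C^\infty$ simply connected closed surfaces, not only convex ones) and (ii) the exterior Serrin-type rigidity. The preliminary ingredients -- Plemelj symmetrization, the boundary identity $(I+K)V=S\Phi$, and the algebraic relation $W=4\pi P-|\Omega|I$ -- are routine manipulations of boundary integral operators.
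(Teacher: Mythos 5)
You are attempting to prove something the paper itself does not prove: this statement is posed as a \emph{conjecture}, supported only by the direct computation $\underline{\lambda}(S^2)=-1/3$ and by Theorem 4.3, which verifies it within the class of ellipsoids using Martensen's sum rule $\lambda_{1,1}+\lambda_{2,1}+\lambda_{3,1}=-1$. Your reduction itself is sound and is essentially the known variational link between Neumann--Poincar\'e eigenvalues and Dirichlet energies: with the paper's normalization one indeed has $(I+K)(v|_{\partial\Omega})=S(\partial_\nu v)$ for $v$ harmonic in $\Omega$, and your computation gives $R(\phi_a)=\frac{|a|^2|\Omega|-a^TWa}{|a|^2|\Omega|+a^TWa}$, i.e.\ the interior-versus-exterior energy ratio for linear boundary data (for the ball $a^TWa=2|a|^2|\Omega|$, recovering $-1/3$). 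So the conjectured bound $\underline{\lambda}\leq -1/3$ would follow from $\lambda_{\max}(W)\geq 2|\Omega|$, in particular from the trace bound on the conductor polarization tensor.

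The genuine gap is your final step: the inequality $\mathrm{tr}(P)\geq 9|\Omega|/(4\pi)$ with equality iff $\partial\Omega$ is a sphere is \emph{not} a classical P\'olya--Szeg\H{o}/Schiffer theorem. It is precisely the P\'olya--Szeg\H{o} conjecture that the ball minimizes the mean electrostatic polarizability among bodies of fixed volume, which is open in the generality you need (arbitrary smooth simply connected surfaces, perfect-conductor limit, with rigidity); this is exactly why the authors state the eigenvalue statement as a conjecture and can only settle ellipsoids. What is actually available in the literature are Hashin--Shtrikman-type bounds and their equality cases for polarization tensors at \emph{finite} contrast (Lipton, Capdeboscq--Vogelius, Kang--Milton); one could try to pass to the infinite-contrast limit to salvage the inequality part, but the rigidity (``supremum achieved only for $S^2$'') does not survive such a limit, and none of this is carried out in your proposal. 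The exterior Serrin-type argument you sketch for the equality case also presupposes that the infimum of the Rayleigh quotient is attained by $\phi_a$ itself, which is not automatic. As written, the proposal assumes the crux and therefore reduces one open conjecture to another rather than proving the statement; it would, however, make a worthwhile remark as an equivalent reformulation of Conjecture 1 in terms of polarizability.
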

\begin{conjecture}
Let $n=3$. For $p>1$, we have  
$$
\inf\limits_{\partial \Omega} {\rm tr}\{(K^* K)^p\}=\left(1-\frac{1}{2^{2p-1}}\right)\zeta (2p-1)
$$
where the infimum is taken over all $C^{\infty}$ simply 
connected closed surfaces and $\zeta(s)$ denotes the Riemann zeta function. 
The infimum is achieved if and only if $\partial \Omega=S^2$. 
\end{conjecture}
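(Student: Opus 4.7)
The argument splits into a direct calculation on $S^2$ and an isoperimetric lower bound against that value.

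\textbf{Step 1: Evaluation on the sphere.} On the unit sphere one has $\nu_y=y$ and the crucial identity $(x-y)\cdot\nu_y=-\tfrac{1}{2}|x-y|^2$, which collapses the double-layer kernel to
$$
\nu_y E(x,y)=-\frac{1}{4\pi|x-y|}.
$$
In particular the kernel is symmetric, so $K$ is self-adjoint on $S^2$ and its singular values coincide with $|\lambda_j|$. Expanding $|x-y|^{-1}=\sum_n P_n(x\cdot y)$ in Legendre polynomials and applying the spherical-harmonic addition formula gives $KY_n^m=-\tfrac{1}{2n+1}Y_n^m$, so the singular values are $\alpha_n=\tfrac{1}{2n+1}$ with multiplicity $2n+1$. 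Summing,
$$
\operatorname{tr}(K^*K)^p=\sum_{n=0}^\infty\frac{2n+1}{(2n+1)^{2p}}=\sum_{k\text{ odd}}\frac{1}{k^{2p-1}}=\bigl(1-2^{1-2p}\bigr)\zeta(2p-1),
$$
which converges precisely when $p>1$, matching the conjectured constant.

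\textbf{Step 2: Isoperimetric lower bound.} For general $\partial\Omega$ the Plemelj--Calder\'on identity $SK^*=KS$, with $S$ the single-layer potential, shows that $SK$ is self-adjoint, hence $K$ is similar to the self-adjoint operator $T=S^{1/2}KS^{-1/2}$. This yields real spectrum but does \emph{not} identify the singular values of $K$ with $|\lambda_j|$ away from $S^2$. For integer $p\ge 2$ the plan is to expand
$$
\operatorname{tr}\bigl((K^*K)^p\bigr)=\int_{(\partial\Omega)^{2p}}\prod_{i=1}^{p}\nu_{y_i}E(x_i,y_i)\,\nu_{y_i}E(x_{i+1},y_i)\,ds
$$
(cyclic indices) and attempt a Cauchy--Schwarz or kernel-rearrangement bound producing the sphere value from Step 1. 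Extension to real $p>1$ is obtained from log-convexity of $p\mapsto\log\operatorname{tr}((K^*K)^p)$, i.e.\ complex interpolation of Schatten classes. The equality case is intended to be recovered from the rigidity of the spherical spectrum: any minimizer must share the singular values of $S^2$, and an inverse-spectral argument in the spirit of Theorem 2.7 should then force $\partial\Omega=S^2$.

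\textbf{Main obstacle.} The crux is Step 2. Since $K$ is non-self-adjoint off $S^2$, singular values and spectrum decouple, and no symmetrization or rearrangement principle for the surface kernel $(x-y)\cdot\nu_y/|x-y|^3$ appears in the literature. A first-variation computation under normal perturbations of $\partial\Omega$ can verify that $S^2$ is a critical point, and the explicit spherical spectrum from Step 1 should make the second variation tractable and --- one hopes --- positive, yielding local minimality. The genuine hurdle is global: ruling out distant competitors almost certainly requires a new geometric identity relating $\operatorname{tr}((K^*K)^p)$ to conformal or Willmore-type invariants of $\partial\Omega$, in analogy with the role of mean curvature in the classical Willmore inequality. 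Without such an identity the conjecture remains out of reach of the methods used for Theorems 2.7 and 2.12, which is precisely why it is recorded as a conjecture rather than proved.
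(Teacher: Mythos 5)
Your Step~1 is correct and is exactly the computation that identifies the conjectured constant: on $S^2$ the double-layer kernel collapses to $-\tfrac{1}{4\pi|x-y|}$, $K$ is self-adjoint with eigenvalues $-\tfrac{1}{2n+1}$ of multiplicity $2n+1$, and ${\rm tr}\{(K^*K)^p\}=\bigl(1-2^{1-2p}\bigr)\zeta(2p-1)$. But the statement is recorded in the paper as a conjecture, and for good reason: neither you nor the paper proves it in the generality stated. What the paper actually proves is only the restriction to ellipsoids (Theorem~4.3), and by a route quite unlike your Step~2. It uses Ritter's classification of the ellipsoidal spectrum via Lam\'e polynomials (for each degree $l\geq 0$ there are $2l+1$ eigenvalues $\lambda_{k,l}$, $k=1,\dots,2l+1$), Martensen's sum rule (Proposition~4.2) $\sum_{k=1}^{2l+1}\lambda_{k,l}=-1$, H\"older's inequality applied to the vector $(\lambda_{1,l},\dots,\lambda_{2l+1,l})$ to get $\sum_{k}|\lambda_{k,l}|^{2p}\geqq(2l+1)^{1-2p}$ with equality iff all the $\lambda_{k,l}$ coincide, and Weyl's inequality ${\rm tr}\{(K^*K)^p\}\geqq\sum_{j}|\lambda_j|^{2p}$ to pass from eigenvalues to the Schatten trace; summing over $l$ gives the bound and, via Ritter's bifurcation of $-\tfrac{1}{2l+1}$ away from the sphere, the rigidity. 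A Martensen-type sum rule for general $\partial\Omega$ is not available, and that is precisely where any proof of the full conjecture would have to begin.

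Two further cautions about your Step~2 sketch, beyond the global obstacle you already flag. Log-convexity of $p\mapsto\log{\rm tr}\{(K^*K)^p\}$ gives \emph{upper} bounds at intermediate $p$ from the endpoints, not the lower bound you would need; it cannot propagate an integer-$p$ inequality to real $p>1$. And the Calder\'on identity $SK^*=KS$ makes $K$ similar to a self-adjoint operator in the $S$-weighted inner product, forcing real spectrum, but ${\rm tr}\{(K^*K)^p\}$ is computed from the unweighted $L^2$ singular values, which that similarity does not control; the paper's actual bridge from spectrum to Schatten trace is Weyl's inequality, and it is a strictly one-sided comparison.
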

We confirm the validity of these conjectures. When $C^{\infty}$ closed surfaces are replaced by 
ellipsoids, these conjectures will be proved (See Theorem 4.3). 
\par 
We end the introduction by comparing with the above results and the spectral geometry 
of Laplacian on manifolds. In the case of Laplacian, 
the isoperimetric properties of manifolds are characterized by 
the first eigenvalue or second eigenvalue or eigenvalue asymptotics, etc.
(See e.g. \cite{Be} and references therein). 
Theorem 2.7, Conjecture 1 and Conjecture 2 correspond to 
these results. Theorem 2.12 can be viewed as eigenvalue asymptotics 
called Weyl's law. For Laplacian, Weyl's law includes the information about the dimension and 
volume of manifolds, and etc.(See e.g. \cite{CH} and \cite{ANPS} and references therein). 
Theorems about zeroes of Laplace eigenfunctions are known as Courant's nodal line theorem 
and Donnelly-Fefferman's results and etc.(See e.g. \cite{CH}, \cite{DF} and \cite{Ze}). 
Roughly speaking, they estimate 
the Haussdorf dimension and measure of nodal sets by the eigenvalues. 
Indeed we prove Theorem 3.10 by using the modified 
Donnelly-Fefferman value distribution theory.  
\setcounter{section}{1}
\setcounter{theorem}{0}
\section{Eigenvalues and singular values of two dimensional double layer potentials}
In \S2 we shall restrict ourselves to two dimensional double layer potentials. 
Such a situation allows us to treat Hilbert-Schmidt norm and Schatten norm of $K$. 
Using these norms, we obtain isoperimetric properties of singular values in \S2.1 
and decay estimates in \S2.2.  
\par
\subsection{The trace of $K^*K$ and its application to isoperimetric problems}
\par
\hspace{5mm} 
We consider the boundary integral equation: 
$$
(K\psi) (x) \equiv \frac{1}{\pi} 
\int_{\partial \Omega} \psi(y)\cdot \nu_{y} \log\frac{1}{|x-y|}\; ds_y,  \eqno{(1)}   
$$
where $\Omega$ is a $C^2$ bounded region in ${\mathbf{R}}^2$ 
and $\nu_y$ means the outer normal derivative on $\partial \Omega$. 
$\partial \Omega \in C^2  \subset C^{1,\alpha}$ is a Lyapunov curve and 
$K$ as well as $K^*$ are compact operators on $L^2(\partial \Omega)$.  
Moreover the spectra in $L^2(\partial \Omega)$ and 
in $C^0(\partial \Omega)$ are identical (e.g. \cite[Theorem 7.3.2]{Mik}). 
A standard result in two-dimensional 
potential theory (See \cite[p.78-80]{Tr} and see also Lemma 2.12) states that 
for closed $C^2$ curves $\partial \Omega$  
$$
\lim_{\substack{x\rightarrow y \\[1pt] x \in \partial \Omega}} \nu_{y} \log\frac{1}{|x-y|}
=-\frac{1}{2}\kappa(y), 
$$
where $\kappa(y)$ denotes the curvature of $\partial \Omega$. 
Consequently, unlike the singular nature of the double layer potentials in $\mathbf{R}^3$, 
the double layer kernel in $\mathbf{R}^2$ is continuous for all 
points $x$ and $y$ on $\partial \Omega$, including when $x=y$. 
It is also known the eigenvalues of the integral operator $K$, defined in equation (1), 
lie in the interval $[-1, 1)$ and are symmetric with respect to the origin 
(e.g. \cite{BM}, \cite{Sh}). The only exception is the eigenvalue $-1$ corresponding 
to constant eigenfunctions. Summarizing these results, we have 
the ``formal" trivial trace formula 
\renewcommand{\thefootnote}{[2]}
\footnote[2]{$K$ is not always 
the usual trace class operator. The above ``formal" trace formula is defined by   
only a conditional summation. Note that 
if $\partial \Omega$ is ${C^3}$ curve, $K$ is the usual trace class operator 
(See the proof of Theorem 2.12 and Remark 2.16).} 
for $K$:  
$$
{\rm tr}(K)\equiv\hspace{-8mm} 
\sum_{\begin{subarray}{c}{\lambda_i : {\rm eigenvalue}\ {\rm of}\ K} \\[1pt] 
{|\lambda_0|>|\lambda_1|=|\lambda_2| \geq |\lambda_3|=|\lambda_4| \geq \cdots }\end{subarray}}
\hspace{-10mm} \lambda_i
=\int_{\partial \Omega} -\frac{1}{2 \pi}\kappa(y) d{s_y}=-1. 
$$
Here $K$ is not selfadjoint or even normal, 
but $K^*K$ is a selfadjoint trace class operators. 
Thus the trace of $K^*K$: 
$$
{\rm{tr}} (K^*K)   
$$ 
is also considered. Consequently we obtain some asymptotic properties of the singular values of $K$. 
In \S 2, we start out by rapidly going over basic examples of ${\rm{tr}} (K^*K)$.    
\par
\begin{example}[The circle ({See \cite{Ah1}})] Let $\partial \Omega$ be a circle of radius $R$. 
We find 
\begin{align*}
&\sigma_p (K)=\{-1, 0\},\\ 
&{\rm{tr}}(K^*K)=1,  
\end{align*}
where $\sigma_p (K)$ means the set of eigenvalues of $K$. 
\end{example}
In the case of ellipse, we have $ {\rm{tr}}(K^*K)>1$.  
\begin{example}[The ellipse ({See \cite{Ah1}, \cite[\S 8.3]{KPS}})]
For $R>0$ and $c>0$, we define the ellipse by 
$\partial \Omega=\{ (x,y) |\ x=\frac{1}{2}c \cosh R \cos \theta,
\; y=\frac{1}{2}c \sinh R \sin \theta \}$. Then   
\begin{align*}
&\sigma_p (K)=\{-1, \pm e^{-2mR}\ |\ m\in {\mathbf{N}} \},\\ 
&{\rm{tr}}(K^*K)>1. 
\end{align*}
\end{example}
Seeing this, we want to characterize the region of which ${\rm{tr}} (K^*K)=1$. 
\begin{lemma}
$K^* K$ is a trace class operator on $L^2(\partial \Omega)$,  
i.e., $K$ is a Hilbert-Schmidt class operator  
and 
$${\rm{tr}}(K^*K)=\frac{1}{\pi^2}\int_{\partial \Omega_y}
\int_{\partial \Omega_x} |\nu_y \log|x-y||^2 \; ds_x\; ds_y. $$
\end{lemma}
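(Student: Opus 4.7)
The plan is to identify $K$ as an integral operator with a continuous kernel on a compact set, conclude it is Hilbert–Schmidt, and then invoke the standard formula for the Hilbert–Schmidt norm in terms of the $L^2$ norm of the kernel.

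First I would write $K$ explicitly as an integral operator with kernel $k(x,y) = \frac{1}{\pi}\,\nu_y \log \frac{1}{|x-y|}$ on $\partial\Omega \times \partial\Omega$. The key input — already stated in the excerpt just before Example 2.1 — is that for a closed $C^2$ curve $\partial \Omega$ one has $\lim_{x\to y,\, x\in\partial\Omega} \nu_y \log\frac{1}{|x-y|} = -\tfrac{1}{2}\kappa(y)$, so that $k(x,y)$ extends continuously to the diagonal $\{x=y\}$. Together with the obvious smoothness off the diagonal, this means $k \in C(\partial\Omega \times \partial\Omega)$.

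Next, since $\partial\Omega$ is compact, $k$ is bounded, and in particular
\[
\int_{\partial\Omega}\int_{\partial\Omega} |k(x,y)|^2 \, ds_x\, ds_y < \infty.
\]
This is exactly the Hilbert–Schmidt criterion for an integral operator on $L^2(\partial\Omega)$: $K$ is Hilbert–Schmidt, i.e. $K \in \mathcal{S}_2$. It is then a standard fact that $K \in \mathcal{S}_2$ if and only if $K^*K \in \mathcal{S}_1$ (trace class), with
\[
\mathrm{tr}(K^*K) = \|K\|_{HS}^2 = \int_{\partial\Omega}\int_{\partial\Omega} |k(x,y)|^2 \, ds_x\, ds_y.
\]
Substituting the explicit form of $k$ and pulling the $1/\pi^2$ out yields the desired identity.

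There is no real obstacle here; the only subtlety is verifying that $k$ genuinely extends continuously across the diagonal, which relies on the $C^2$ regularity of $\partial\Omega$ (so that $\kappa$ is defined and the tangential/normal expansion of $\log|x-y|$ has the $-\tfrac{1}{2}\kappa(y)$ limit). One should also note briefly that the standard identification $\mathrm{tr}(K^*K) = \int\int |k|^2$ for integral operators follows from expanding in any orthonormal basis of $L^2(\partial\Omega)$ and applying Parseval, so no extra structure beyond the $L^2$ kernel is needed.
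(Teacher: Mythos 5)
Your proposal is correct, and it reaches the trace identity by a genuinely more direct route than the paper. You work with the kernel $k(x,y)=\frac{1}{\pi}\nu_y\log\frac{1}{|x-y|}$ of $K$ itself: its continuity up to the diagonal (the $-\tfrac12\kappa(y)$ limit, which the paper records separately as Lemma 2.13) gives $k\in L^2(\partial\Omega\times\partial\Omega)$, hence $K$ is Hilbert--Schmidt, and then the standard identity $\mathrm{tr}(K^*K)=\Vert K\Vert_{HS}^2=\iint |k|^2\,ds_x\,ds_y$ (proved by expanding in an orthonormal basis and Parseval) finishes the argument. The paper instead passes to $KK^*$, whose kernel is continuous, symmetric and non-negative definite, and invokes Mercer's theorem to obtain an absolutely and uniformly convergent eigenfunction expansion, from which the trace is computed by integrating the diagonal. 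Your argument needs strictly less input --- square-integrability of $k$ suffices, continuity is only used as a convenient way to get it --- and avoids Mercer entirely; the paper's route buys extra structural information (nonnegativity of the eigenvalues of $KK^*$, continuity of the corresponding eigenfunctions, uniform convergence of the kernel expansion) that is not needed for the stated lemma. Both are complete proofs of the statement, so there is no gap to flag; if you wanted to be scrupulous you could add one line noting that smoothness of $k$ off the diagonal plus the diagonal limit indeed gives $k\in C(\partial\Omega\times\partial\Omega)$, exactly as in the paper's Lemma 2.13.
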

\begin{proof}
$$(KK^* \psi)(x)=\frac{1}{\pi^2} \int_{\partial \Omega_z}
\int_{\partial \Omega_y}  \left(\nu_z \log|x-z| \right) 
\left(\nu_z \log|y-z| \right) \psi(y)  \; ds_y\; ds_z. $$
The kernel $K(x, y)$ of $KK^*$ is continuous symmetric 
non-negative definite on $\partial \Omega_x \times \partial \Omega_y$. 
By Mercer's theorem (See e.g. \cite[p.138]{CH}, \cite[Theorem 1.1]{FM} and \cite{Ko}), 
there is an orthonormal set $\{u_i\}_i$ of $L^2(\partial \Omega)$ 
consisting of eigenfunctions of $KK^*$ such that corresponding eigenvalues $\{\mu_i\}_i$ 
are nonnegative. The eigenfunctions corresponding to non-zero eigenvalues are 
continuous on $\partial \Omega$ and $K(x, y)$ has the representation 
$$
K(x, y)=\sum_{i=1}^{\infty} \mu_i u_i(x) u_i(y), 
$$ 
where the convergence is absolute and uniform.  
This leads to 
$${\rm{tr}}(K^*K)={\rm{tr}}(KK^*)=\frac{1}{\pi^2} \int_{\partial \Omega_y}
\int_{\partial \Omega_x} |\nu_y \log|x-y||^2 \; ds_x\; ds_y. $$
\end{proof}
Recall that every compact operator $K$ on Hilbert space takes the 
following canonical form 
$$
K\psi  = \sum_{j=1}^{\infty} \alpha_{j} \langle \psi, v_j \rangle  u_j 
$$ 
for some orthonormal basis $\{u_j\}$ and $\{ v_j\}$, 
where $\alpha_i$ are sigular values of $K$ (i.e. the eigenvalues of $(K^*K)^{1/2}$)  
and $\langle \cdot, \cdot \rangle$ means the $L^2$ inner product. 
Also, the usual operator norm is $\Vert K \Vert=\sup_{j} (\alpha_j)$. 
${\rm{tr}}(K^*K)=\sum_{j=1}^{\infty} |\alpha_j|^2$ and  
by using Weyl's inequality (See. e.g. \cite{Si}, \cite{Te}) :   
$$\sum_{j=1}^{\infty} |\alpha_{j}|^2 \geqq \sum_{\lambda_j\in \sigma_p(K)} |\lambda_j|^2, $$
we obtain: 
\begin{lemma}
$$
{\rm{tr}}(K^*K) \geqq \Vert K \Vert^2, \quad  {\rm{tr}}(K^*K) 
\geqq \sum_{\lambda_j \in \sigma_p(K)} |\lambda_j|^2\geqq 1. 
$$
\end{lemma}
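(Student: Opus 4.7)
The plan is to assemble the three claimed inequalities from the singular-value machinery summarized just before the lemma, together with the standing fact that $-1$ is always an eigenvalue of $K$.

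First, I invoke the canonical form displayed above the statement: with $\{\alpha_j\}$ the ordered singular values of $K$, one has $\operatorname{tr}(K^*K) = \sum_{j=1}^{\infty} \alpha_j^2$, and this series converges by Lemma 2.5. Since the usual operator norm satisfies $\|K\| = \sup_j \alpha_j = \alpha_1$, the first inequality $\operatorname{tr}(K^*K) \geq \|K\|^2$ is obtained at once by keeping only the leading term of the series and discarding the nonnegative tail.

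Second, the inequality $\operatorname{tr}(K^*K) \geq \sum_{\lambda_j \in \sigma_p(K)} |\lambda_j|^2$ is precisely the $p=2$ case of Weyl's inequality as it is stated (and referenced) immediately above the lemma. Nothing further is needed here; this step is a direct quotation.

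Third, for the last inequality $\sum_{\lambda_j \in \sigma_p(K)} |\lambda_j|^2 \geq 1$, I use the fact recalled in the introduction (and again at the beginning of \S2.1) that $-1 \in \sigma_p(K)$, corresponding to the constant eigenfunction. Hence the term $|-1|^2 = 1$ appears in the sum, and since all other terms are nonnegative, the bound follows.

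There is no substantive obstacle: the lemma is essentially a bookkeeping consequence of (a) the SVD expression for $\operatorname{tr}(K^*K)$ in Lemma 2.5, (b) the Weyl inequality, and (c) the existence of the eigenvalue $-1$. The only care required is to make sure the three ingredients are cited in the correct order so that each inequality in the chain is justified before it is used.
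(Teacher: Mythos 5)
Your proof is correct and follows essentially the same route the paper takes: the paper derives the lemma directly from the singular-value expansion $\operatorname{tr}(K^*K)=\sum_j \alpha_j^2$ together with $\|K\|=\sup_j\alpha_j$, Weyl's inequality, and the fact that $-1\in\sigma_p(K)$. No meaningful difference in method.
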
 
\begin{remark}
From Lemma 2.4 the ordered eigenvalues satisfy  
$$
\sum_{\lambda_j\in \sigma_p(K)} |\lambda_j|^2<\infty.
$$
So for all $\epsilon>0$ there exists $N$ such that 
$$
(n-N)|\lambda_n|^2\leqq \sum_{N+1}^{n} |\lambda_j|^2<\epsilon
$$
and hence 
$$
n|\lambda_n|^2<2\epsilon \quad \mbox{for all}\; n>2N.  
$$
Accordingly $\lambda_j=o(j^{-1/2})$. 
This is not the best possible estimate (See example 2.1, 2.2 and see also \S 2.2).  
\end{remark}
In the following of this subsection, we apply the trace for 
the analysis of singular values.   
The minimizer of ${\rm tr}{(K^*K)}$ is attained by $\partial \Omega=S^1$. 
\begin{theorem} Let $\Omega$ be a simply connected region with $C^2$ 
boundary. 
$$
 {\rm{tr}}(K^*K)=1\ \mbox{is necessary and sufficient for}\ \partial \Omega=S^1.
$$
\end{theorem}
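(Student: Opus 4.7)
The plan is to obtain sufficiency directly from Example~2.1, and to attack necessity by translating the hypothesis $\operatorname{tr}(K^*K)=1$ into strong spectral rigidity of $K$, which will ultimately yield a pointwise rigidity statement about the curvature of $\partial\Omega$.

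For necessity, I would first record that because $\Omega$ is simply connected, the eigenvalue $-1$ of $K$ is simple, with the constants as its eigenspace (this is the standard potential-theoretic fact cited in the introduction). Lemma~2.4 and the identity $\operatorname{tr}(K^*K)=\sum_j\alpha_j^2$ then give the sandwich
\[
1 \;=\; \sum_j \alpha_j^2 \;\geq\; \sum_{\lambda_j\in\sigma_p(K)}|\lambda_j|^2 \;\geq\; 1,
\]
so every inequality is an equality. The rightmost equality forces $\sigma_p(K)\subseteq\{-1,0\}$ with $-1$ of multiplicity one, while the leftmost is the equality case of the Weyl--Lalesco inequality $\sum\alpha_j^2\geq\sum|\lambda_j|^2$, which forces $K$ to be normal. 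I would justify this last step via Ringrose's theorem: a compact operator admits an upper-triangular representation in an orthonormal basis adapted to a maximal chain of closed invariant subspaces, yielding the decomposition $\|K\|_{HS}^2=\sum|\lambda_j|^2+\|N\|_{HS}^2$ for the strict upper-triangular part $N$, so equality forces $N=0$ and hence $K$ is diagonal in some orthonormal basis.

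With $K$ normal and compact, with spectrum $\{-1,0\}$ and with the constants as its $(-1)$-eigenspace, the spectral theorem identifies $K$ with the negative of the orthogonal projection onto constants:
\[
(K\psi)(x) \;=\; -\frac{1}{|\partial\Omega|}\int_{\partial\Omega}\psi(y)\,ds_y,
\]
so the integral kernel of $K$ is the constant $-1/|\partial\Omega|$ on $\partial\Omega\times\partial\Omega$. Comparing this with the actual kernel $\tfrac{1}{\pi}\nu_y\log\tfrac{1}{|x-y|}$ and using the diagonal limit formula recalled at the start of Section~2, the limit $x\to y$ produces
\[
\kappa(y) \;=\; \frac{2\pi}{|\partial\Omega|} \qquad\text{for every } y\in\partial\Omega.
\]
A simple closed $C^2$ planar curve of constant curvature is a circle (integrate the Frenet equations, or note that Gauss--Bonnet already fixes $\kappa\,|\partial\Omega|=2\pi$ and apply the standard rigidity), so $\partial\Omega = S^1$, as desired.

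The hardest step will be the equality case of the Weyl--Lalesco inequality: asserting that $\sum\alpha_j^2=\sum|\lambda_j|^2$ forces normality of a compact operator is standard but not elementary, and I would either cite Ringrose's theorem (or Simon's \emph{Trace Ideals}) or reproduce the short triangularization argument. By contrast, the passage from normality plus the spectrum $\{-1,0\}$ to the explicit rank-one form of $K$, the comparison of kernels on the diagonal, and the classification of constant-curvature simple closed curves are routine.
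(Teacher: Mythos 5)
Your proposal is correct, but it takes a genuinely different route from the paper's. The paper never touches the spectrum of $K$: it completes the square at the kernel level, using the Gauss identity $\int_{\partial\Omega}\nu_y\log|x-y|\,ds_y=\pi$ to write
$$
{\rm tr}(K^*K)=1+\frac{1}{\pi^2}\iint_{\partial\Omega\times\partial\Omega}\bigl|\nu_y\log|x-y|-C\bigr|^{2}\,ds_x\,ds_y,\qquad C=\frac{\pi}{|\partial\Omega|},
$$
so ${\rm tr}(K^*K)=1$ forces the (continuous) kernel to be the constant $C$ outright, whence $\frac12\kappa\equiv C$ on the diagonal and $\partial\Omega$ is a circle. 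You instead squeeze $1=\sum_j\alpha_j^2\ge\sum_j|\lambda_j|^2\ge1$, invoke the equality case of Weyl's inequality to get normality, identify $K$ with minus the orthogonal projection onto constants, and only then read off that the kernel is constant; both arguments finish with the same diagonal-limit/constant-curvature step. Your route buys extra information along the way (normality, the full spectral picture $\sigma_p(K)=\{-1,0\}$ with $-1$ simple, in effect Theorem 2.7 as a byproduct), at the price of a nontrivial operator-theoretic ingredient, whereas the paper's computation is elementary and self-contained. One caveat on your justification of that ingredient: a maximal chain of invariant subspaces of a compact operator need not be atomic (the Volterra operator admits no orthonormal triangularizing basis), so ``an orthonormal basis adapted to a maximal chain'' is not literally available in general. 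The correct standard argument is the one underlying Weyl's inequality itself: take a Schur orthonormal system spanning the root subspaces of the nonzero eigenvalues, complete it to an orthonormal basis of $L^2(\partial\Omega)$, and compare Hilbert--Schmidt norms; equality forces the strictly triangular coefficients to vanish and $K$ to annihilate the orthogonal complement, hence $K$ is diagonal in that basis and normal (see Gohberg--Krein or Simon's \emph{Trace Ideals}). With that repair, or simply a citation of the equality statement, your proof is complete.
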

\begin{proof}
We note that $\int_{\partial \Omega_x}\nu_y \log|x-y| \; ds_y=\pi$. 
Letting $C\equiv \pi\cdot ({\rm length\ of\ }\partial \Omega)^{-1}$, 
\begin{align*} 
{\rm{tr}}(K^*K) &=\frac{1}{\pi^2} \int_{\partial \Omega_y}
\int_{\partial \Omega_x} |\nu_y \log|x-y||^2 \; ds_x\; ds_y \\
&=\frac{1}{\pi^2} \int_{\partial \Omega_y}\int_{\partial \Omega_x} 
|\nu_y \log|x-y| -C|^2 \; ds_x\; ds_y 
+ \frac{2C}{\pi^2}\int_{\partial \Omega_y} \int_{\partial \Omega_x} 
\nu_y \log|x-y| \; ds_x\; ds_y 
- \frac{C^2}{\pi^2} \int_{\partial \Omega_y} \int_{\partial \Omega_x}  \; ds_x\; ds_y \\
&=\frac{1}{\pi^2} \int_{\partial \Omega_y}\int_{\partial \Omega_x} 
|\nu_y \log|x-y| -C|^2 \; ds_x\; ds_y +1. 
\end{align*}
It follows that 
${\rm{tr}}(K^*K)=1 \; \Rightarrow \; \nu_y \log |x-y| =C$  
for all $(x, y)\in \partial\Omega_x \times \partial \Omega_y$. 
\par
By the continuity of $\nu_y \log|x-y|$, 
$$\frac{1}{2}\kappa(x)=C\quad {\rm (constant)}$$ 
as desired. 
\end{proof}
Suppose ${\rm{tr}}(K^*K)=1$. Then only one singular value takes $1$, otherwise $\alpha_j=0$.
Thus we obtain: 
\begin{theorem}
Let $\Omega$ be a simply connected region with $C^2$ 
boundary. 
$$ \sigma_{sing}(K)\backslash\{0\}=\{1 \}\ \mbox{is necessary and sufficient for}\ \partial \Omega=S^1$$ 
where $\sigma_{sing}(K)$ denotes the set of singular values. 
\end{theorem}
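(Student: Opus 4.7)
The plan is to reduce Theorem~2.7 to Theorem~2.6 by passing between the trace ${\rm tr}(K^*K)=\sum_j\alpha_j^2$ and the singular-value list. The bridge is the standard estimate $\alpha_1\geq|\lambda_0|$ for compact operators (applied to $K$ with the eigenvalue $\lambda_0=-1$ corresponding to constant eigenfunctions), which supplies the lower bound $\alpha_1\geq 1$.

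For necessity, assume $\partial\Omega=S^1$. Theorem~2.6 yields ${\rm tr}(K^*K)=1$. Combined with $\alpha_1^2\leq\sum_j\alpha_j^2=1$ this gives $\alpha_1\leq 1$; together with $\alpha_1\geq 1$ from the bound above we obtain $\alpha_1=1$, and then $\sum_{j\geq 2}\alpha_j^2=0$ forces $\alpha_j=0$ for every $j\geq 2$. Thus there is exactly one nonzero singular value, equal to $1$, so $\sigma_{sing}(K)\setminus\{0\}=\{1\}$. For sufficiency, read $\sigma_{sing}(K)\setminus\{0\}=\{1\}$ as the list of nonzero singular values (with multiplicity, following the enumeration of $\sigma_{sing}(K)$ given in the introduction) being just $(1)$; then ${\rm tr}(K^*K)=\sum_j\alpha_j^2=1$, and Theorem~2.6 delivers $\partial\Omega=S^1$.

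All of the geometric content is packaged inside Theorem~2.6, via the completing-the-square identity that bounds ${\rm tr}(K^*K)$ below by $1$. The only mild subtlety specific to Theorem~2.7 is the convention about multiplicities in the symbol $\{1\}$: under the multiplicity reading, which matches the text's own phrasing ``only one singular value takes $1$, otherwise $\alpha_j=0$,'' the proof is an immediate corollary of Theorem~2.6. Under the alternative reading of $\{1\}$ as a bare set the remaining obstacle would be to exclude the possibility of several nonzero singular values all equal to $1$, which one expects to handle by observing that $K^*K$ would then be an orthogonal projection of rank $N\geq 2$ and playing this partial-isometry structure of $K$ off against the simplicity of the eigenvalue $-1$ (via e.g.\ the sharp Weyl inequalities $\sum|\lambda_j|^2\leq\sum\alpha_j^2$).
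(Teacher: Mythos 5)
Your proposal is correct and follows essentially the same route as the paper: reduce the statement to Theorem~2.6 via ${\rm tr}(K^*K)=\sum_j\alpha_j^2$, and use the lower bound $\alpha_1=\Vert K\Vert\geq|\lambda_0|=1$ coming from the eigenvalue $-1$ (the content of Lemma~2.4) to force $\alpha_1=1$ and $\alpha_j=0$ for $j\geq 2$. Your remark on the multiplicity reading of $\{1\}$ matches the paper's own convention (``only one singular value takes $1$, otherwise $\alpha_j=0$''), so no further argument is needed.
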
 
From the canonical form of $K$, the ball symmetry property of double layer potentials is also obtained: 
\begin{corollary}(See \cite[Theorem 1.3]{Li}) Let $\Omega$ be a simply connected region with $C^2$ 
boundary 
and $L^2_0(\partial \Omega)=\{ \psi\in L^2(\partial \Omega)\; |\; \int_{\partial\Omega}\psi\; ds=0\}.$ 
If $(K\psi)(x)=0$ for all $\psi(x)\in L^2_0(\partial \Omega)$, then $\partial \Omega=S^1$. 
\end{corollary}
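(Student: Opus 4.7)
The plan is to deduce the corollary directly from Theorem 2.7 by showing that the hypothesis forces $\sigma_{sing}(K)\setminus\{0\}=\{1\}$. The canonical form stated just before the corollary,
\[
K\psi=\sum_{j=1}^{\infty}\alpha_j\langle\psi,v_j\rangle u_j,
\]
with $\{v_j\}$ and $\{u_j\}$ orthonormal systems, is what I would use to translate the vanishing condition on $L^2_0(\partial\Omega)$ into a statement about the singular values.

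First I would observe: if $K\psi=0$ for every $\psi\in L^2_0(\partial\Omega)$, then plugging $\psi\in L^2_0(\partial\Omega)$ into the canonical form and pairing with $u_k$ gives $\alpha_k\langle\psi,v_k\rangle=0$ for each $k$. Thus every $v_k$ with $\alpha_k>0$ must lie in the orthogonal complement $L^2_0(\partial\Omega)^{\perp}$. But this complement is one-dimensional, spanned by the constant function $c_0\equiv|\partial\Omega|^{-1/2}$. Since the $v_k$ are orthonormal, at most one of them — say $v_1$ — can have $\alpha_1>0$, and necessarily $v_1=c_0$. In other words, $K$ has rank at most one.

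Next I would pin down the value $\alpha_1$ using the known fact that $-1\in\sigma_p(K)$ with constant eigenfunctions. Applying the canonical form to the unit constant $v_1=c_0$ gives
\[
K c_0=\alpha_1\langle c_0,v_1\rangle u_1=\alpha_1 u_1,
\]
whereas the eigenvalue relation yields $K c_0=-c_0$. Comparing norms (with $\|u_1\|=\|c_0\|=1$) forces $\alpha_1=1$ (and $u_1=-c_0$). Hence $\sigma_{sing}(K)\setminus\{0\}=\{1\}$.

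Finally I would invoke Theorem 2.7 to conclude $\partial\Omega=S^1$. There is no real obstacle here; the only point that requires a little care is step two — verifying that $L^2_0(\partial\Omega)^{\perp}$ is exactly the one-dimensional space of constants, which is immediate from the definition of $L^2_0(\partial\Omega)$ — and the appeal to the known spectral fact that $-1$ is an eigenvalue with constant eigenfunction, which is recorded in the introduction of the paper.
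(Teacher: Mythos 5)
Your proof is correct and follows exactly the route the paper intends: the sentence immediately preceding Corollary~2.8 says it is obtained ``from the canonical form of $K$,'' and your argument translates the hypothesis into $\sigma_{sing}(K)\setminus\{0\}=\{1\}$ via the singular value decomposition before invoking Theorem~2.7. The only tiny point worth making explicit is that since $K1=-1\neq 0$ the operator is nonzero, so there is \emph{exactly} one positive singular value rather than at most one, which your computation of $\alpha_1=1$ already implicitly handles.
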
 
M. Lim proved that if $K$ is self-adjoint, then $\partial\Omega$ is circle. 
Lim's work is esentially based on the ``moving hyperplane" method of Alxandroff and Serrin, 
but we are not aware of any studies in this derection (See \cite{Re}, \cite{Se}). 
It is also known \cite{S} that the disk is the only planar domain for which $K$ has finite rank. 
\par 
\begin{remark}
For higher dimensions, $K^*K$ is not always a trace class operator (See \S 4). 
\end{remark}
In the following we introduce 
well-known classical results on $\sigma_p(K)$ (See e.g. \cite{Scho1}):  
Even when $\sigma_{sing}(K)$ is replaced by $\sigma_{p}(K)$, Theorem 2.7 holds true.
The points $\lambda \in \sigma_p(K) \backslash \{-1\}$ are known as the Fredholm 
eigenvalues of $\partial\Omega$. 
The largest eigenvalue $\overline{\lambda}$ is often interest. 
By the symmetry of eigenvalues, we have  
$\overline{\lambda}=-\underline{\lambda}\equiv -\inf{\sigma_{p}(K)\backslash\{-1\}}$. 
\par
Let $\partial\Omega$ be on the Riemann sphere $\hat{\mathbf{C}}=\mathbf{C}\cup \{\infty\}$.
Then $\partial\Omega$ divides $\hat{\mathbf{C}}=\mathbf{C}\cup \{\infty\}$ into complementary 
simply connected domains $\Omega$ and $\bar{\Omega}^c$. Let $\mathcal{H}$ be the family of all 
functions $u$ continuous in $\hat{\mathbf{C}}$ and harmonic in $\Omega\cup \bar{\Omega}^c$, 
with $0<\mathcal{D}_{\Omega}(u)+\mathcal{D}_{{\bar{\Omega}}^c}(u)<\infty$. 
Here $\mathcal{D}_{A}(u)$ denotes the Dirichlet integral on $A$: 
$$\mathcal{D}_{A}(u)=\int \int_{A} u_x^2+u_y^2\; dx dy.$$
Ahlfors \cite{Ahl} showed the relationships between the Fredholm eigenvalues, the Dirichlet integral 
and quasiconformal mappings. 
Especially the value $\overline{\lambda}$ 
can be represented in terms of the Dirichlet integral:  
$$\overline{\lambda}
=\sup_{u\in {\mathcal{H}}} 
\frac{|\mathcal{D}_{\Omega}(u)-\mathcal{D}_{{\bar{\Omega}}^c}(u)|}
{\mathcal{D}_{\Omega}(u)+\mathcal{D}_{{\bar{\Omega}}^c}(u)}.$$ 
Since conformal mappings preserve harmonic functions and Dirichlet integrals, 
$\overline{\lambda}$ is invariant under linear fractional transformations. 
Let $f : \Omega\cup\bar{\Omega}^c  \rightarrow \Omega\cup\bar{\Omega}^c$ of $\hat{\mathbf{C}}$ be 
a orientation preserving homeomorphism whose distributional partial derivatives 
are in $L^2_{loc}$. 
If $f$ preserves the curve $\partial \Omega$, the reflection coefficient of $f$ is defined by 
$$q_{\partial \Omega}=\inf \Vert \partial_{\bar{z}} f/\partial_z f \Vert_{\infty}$$ 
where the infimum is taken over all quasireflections across $\partial\Omega$ provided 
these exist and is attained by some quasireflection $f_0$. 
The number $M$ satisfying  
$$
q_{\partial \Omega}=\frac{M+1}{M-1}  
$$ 
is called the quasiconformal constant. The $M$-quasiconformal mapping  
is an orientation-preserving diffeomorphism whose derivative maps 
infinitesimal circles to infinitesimal ellipses with eccentricity at most $M$. 
A basic ingredient for estimating $\overline{\lambda}$ is known 
as Ahlfors inequality \cite{Ahl}:  
$$
\overline{\lambda}\geq \frac{1}{q_{\partial \Omega}}.  
$$ 
If $\overline{\lambda}=0$, then $q_{\partial\Omega}=\infty$ and $M=1$. 
So $f_0$ is 1-conformal, hence conformal. The conformal mapping of $\Omega\cup\bar{\Omega}^c$ 
onto $\Omega\cup\bar{\Omega}^c$ can be extended to a 1-conformal mapping of $\hat{\mathbf{C}}$ 
onto $\hat{\mathbf{C}}$. The only such mappings are linear fractional trandformations, 
and so, since $\partial\Omega$ is mapped onto $\partial \Omega$, it must itself be $S^1$. 
Thus $q_{\partial \Omega}=\infty$, $\overline{\lambda}=0$ and $\sigma_p(K) \backslash \{-1\}=\{0\}$
only for the circle. 
\begin{remark}
Many authors study in this direction. We mention only some fascinating results. 
\par
1. If $\partial\Omega$ is convex, then $\overline{\lambda}\geqq \{1-(|\partial\Omega|/2\pi R)\}^{-1}$ 
where $R$ is the supremum of radii of all circles which intersect 
$\partial\Omega$ at least 3 points. (In case $\partial\Omega$ is 
smooth, $R$ is the maximum radius of curvature). 
This is due to C. Neumann (e.g. \cite{Scho2}, \cite{Wa}).   
\par
2. Recently Krushkal proved the 
celebrated inequality (See \cite[p.358]{Kr1} and reference in \cite{Kr2}): 
$$
\frac{3}{2\sqrt{2}}\frac{1}{q_{\partial \Omega}} \geq 
\overline{\lambda} \geq \frac{1}{q_{\partial \Omega}}.  
$$ 
\par
3. For higher dimensions, Fredholm eigenvalues are also characterized by 
Dirichlet integrals (e.g. \cite{S}, \cite{KPS}).    
\end{remark}      
\begin{remark}
Taking the limit $R\rightarrow \infty$ in example 2.2, we have 
$\sup\limits_{\partial \Omega} \overline{\lambda}=1$ 
where the supremum is taken over all $C^{\infty}$ domain $\Omega$. 
\end{remark}
\subsection{Asymptotic properties of $\sigma_{p}(K)$}
In the preceding subsection, we considered Hilbert-Schmidt norm of $K$. More generally 
$K$ is in Schatten classes of $r>\frac{2}{2k-3}$ 
for $C^k$ $(k \geqq 2)$ closed curve $\partial \Omega$. 
(For details on the notion of the Schatten classes, see e.g. \cite{Mc}).
\par 
Let $\lambda_n$ be eigenvalues of $K$ satisfying 
$$
|\lambda_0|>|\lambda_1|=|\lambda_2| \geq |\lambda_3|=|\lambda_4| \geq \cdots. 
$$
In the case of ellipse, we find 
$$
\lambda_j =O(e^{-cj})
$$
where $\lambda_j=O(e^{-cj})$ means that there exists a constant $C>0$ such that 
$\lambda_j\leq C e^{-cj}$ for large $j\in \mathbf{N}$. 
For general $C^k$ closed curves $\partial \Omega$, we obtain: 
\begin{theorem}
Let $n=2$ and $\Omega$ be a $C^{k}$ $(k \geqq 2)$ bounded region. For any $\alpha >-2k+3$,   
$$\alpha_j=o(j^{\alpha/2})\ \mbox{and}\ \lambda_j =o(j^{\alpha/2})\quad \text{as}\; j\rightarrow \infty.$$ 
\end{theorem}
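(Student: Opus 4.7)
The plan is to deduce the singular-value and eigenvalue decay directly from Schatten-class membership of $K$. The key input, already flagged in the paragraph preceding the theorem, is that $K \in S_r$ whenever $r > \frac{2}{2k-3}$; granted this, the rest of the argument is a short two-step extraction.

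First I would fix $\alpha > -2k+3$ and choose $r$ with $\frac{2}{2k-3} < r < -\frac{2}{\alpha}$ (possible precisely because $-\alpha < 2k-3$). Then $K\in S_r$ gives $\sum_{j\geq 1}\alpha_j^r <\infty$, and since $\{\alpha_j\}$ is non-increasing the same telescoping argument as in Remark 2.5 yields
$$ \tfrac{n}{2}\,\alpha_n^r \;\leq\; \sum_{j>n/2}\alpha_j^r \;\longrightarrow\; 0,$$
so $\alpha_n = o(n^{-1/r})$. Our choice of $r$ ensures $-1/r > \alpha/2$, giving $\alpha_n = o(n^{\alpha/2})$, as desired.

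Second, to pass from singular values to eigenvalues, I would invoke Weyl's inequality in its Schatten-class form,
$$\sum_{j=1}^{\infty} |\lambda_j|^r \;\leq\; \sum_{j=1}^{\infty}\alpha_j^r < \infty,$$
which shows the eigenvalue sequence is also in $\ell^r$. The same monotonicity trick (applied to the non-increasing sequence $|\lambda_j|$) then produces $\lambda_j = o(j^{-1/r}) = o(j^{\alpha/2})$, closing the argument.

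The real work—and the main obstacle—is justifying the Schatten input, i.e.\ why $K \in S_r$ for every $r > \frac{2}{2k-3}$. Here one parametrizes $\partial\Omega$ by arc length $\gamma : \mathbf{R}/L\mathbf{Z} \to \partial\Omega$ and examines the transplanted kernel $k(s,t) = -\tfrac{1}{\pi}\nu_{\gamma(t)}\log|\gamma(s)-\gamma(t)|$. A Taylor expansion around the diagonal uses $\gamma'(t)\cdot\nu(t)=0$ and $\gamma''(t)\cdot\nu(t) = \kappa(t)$ to show that the apparent singularity cancels (the limit along $s\to t$ is $-\kappa(t)/(2\pi)$, matching Lemma 2.12), and a careful derivative count shows $k \in C^{k-2}$ jointly on the torus $(\mathbf{R}/L\mathbf{Z})^2$. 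Feeding this regularity into a Birman--Solomyak type Schatten embedding for integral operators on a one-dimensional manifold produces $K\in S_r$ exactly for $r>\frac{2}{2k-3}$. The Hilbert--Schmidt case $k=2$, $r=2$ already appears as Lemma 2.3; the difficulty for general $k$ is extracting the sharp exponent from the smoothness of the kernel, which requires bookkeeping of how each derivative in $s$ or $t$ both uses up one degree of smoothness of $\gamma$ and modifies the singular behavior of the $\log$-kernel.
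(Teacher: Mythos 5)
Your proposal follows essentially the same route as the paper: joint $C^{k-2}$ regularity of the transplanted kernel obtained by Taylor expansion about the diagonal (the paper's Lemma 2.13), a kernel-smoothness-to-Schatten embedding giving $K\in S_r$ for every $r>\frac{2}{2k-3}$ (the paper invokes the Delgado--Ruzhansky result, Theorem 2.14, where you cite a Birman--Solomyak-type embedding with the same exponent), and then Weyl's inequality together with the monotonicity/telescoping argument of Remark 2.5. The only blemish is the sentence ``our choice of $r$ ensures $-1/r>\alpha/2$'': the choice $\frac{2}{2k-3}<r<-\frac{2}{\alpha}$ in fact gives $-1/r<\alpha/2$, which is exactly what the final comparison $n^{-1/r}\leqq n^{\alpha/2}$ requires, so the argument stands as written once this sign slip is corrected (and the case $\alpha\geqq 0$ is trivial since $\alpha_j\to 0$).
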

Thus the boundary regularity is essential to the decay rate of eigenvalues. 
To prove Theorem 2.12, we first prepare a fundamental lemma. 
For the sake of the readers' convenience, we also give the proof to the following.   
\begin{lemma}
If $k \geqq 2$, then $E\in C^{k-2}(\partial \Omega \times \partial \Omega)$. Especially we have 
$$
\lim_{\substack{x\rightarrow y \\[1pt] x \in \partial \Omega}} \nu_{y} \log\frac{1}{|x-y|}
=-\frac{1}{2}\kappa(y), 
$$
where $\kappa(y)$ denotes the curvature of $\partial \Omega$. 
\end{lemma}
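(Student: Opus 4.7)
The plan is to work in a $C^k$ arc-length parametrization $\gamma:\mathbf{R}/L\mathbf{Z}\to\partial\Omega$ and show that the apparent singularity of the kernel at $x=y$ is removable, by isolating matching $(s-t)^2$ factors in both the numerator and the denominator. Let $T(t)=\gamma'(t)$ be the unit tangent and $\nu(t)$ the outer unit normal; since $\gamma\in C^k$, both $T$ and $\nu$ belong to $C^{k-1}$. With $x=\gamma(s)$, $y=\gamma(t)$ and $\nabla_y\log(1/|x-y|)=(x-y)/|x-y|^2$, we have
\[
\nu_y\log\frac{1}{|x-y|}=\frac{\nu(t)\cdot(\gamma(s)-\gamma(t))}{|\gamma(s)-\gamma(t)|^2}.
\]

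The first step is to treat the denominator. Writing $\gamma(s)-\gamma(t)=(s-t)\,g(s,t)$ with $g(s,t):=\int_0^1\gamma'(t+u(s-t))\,du$, the integral form of Taylor's theorem gives $g\in C^{k-1}$ and $g(t,t)=T(t)$, so $|g(s,t)|^2\in C^{k-1}$ is bounded below by a positive constant on a neighbourhood of the diagonal, and $|\gamma(s)-\gamma(t)|^2=(s-t)^2|g(s,t)|^2$.

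The main step is to extract a compensating $(s-t)^2$ from the numerator, exploiting $\nu(t)\perp T(t)$. Because the integrand in $\nu(t)\cdot g(s,t)=\int_0^1\nu(t)\cdot\gamma'(t+u(s-t))\,du$ vanishes at $u=0$, one more integral Taylor expansion in the $u$ variable followed by switching the order of integration yields
\[
\nu(t)\cdot(\gamma(s)-\gamma(t))=(s-t)^2 H(s,t),\qquad H(s,t):=\int_0^1 (1-w)\,\nu(t)\cdot\gamma''(t+w(s-t))\,dw.
\]
Since $\gamma''\in C^{k-2}$ and $\nu\in C^{k-1}$, differentiation under the integral sign gives $H\in C^{k-2}$ in $(s,t)$. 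Cancelling the $(s-t)^2$ factors we obtain
\[
\nu_y\log\frac{1}{|x-y|}=\frac{H(s,t)}{|g(s,t)|^2},
\]
a $C^{k-2}$ function near the diagonal; elsewhere smoothness is automatic because $|x-y|>0$, so the global $C^{k-2}$ statement follows. Evaluating at $s=t$ gives $H(t,t)=\tfrac12\nu(t)\cdot\gamma''(t)=-\tfrac12\kappa(t)$ via the planar Frenet relation $\gamma''=\kappa N$ together with $\nu=-N$, and $|g(t,t)|^2=1$, so the limit is $-\tfrac12\kappa(y)$ as asserted.

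The main obstacle is the regularity bookkeeping: the second Taylor expansion brings in $\gamma''$, which is only $C^{k-2}$, and this is precisely what caps the kernel's smoothness at $C^{k-2}$ rather than $C^{k-1}$; one must also verify that differentiation under the integral sign in the definition of $H$ is justified up to order $k-2$, which is standard given the joint continuity of the integrand and its partials up to that order.
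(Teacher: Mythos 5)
Your argument is correct and follows essentially the same route as the paper: Taylor's theorem with integral remainder pulls a factor $(s-t)^2$ out of both numerator and denominator of the kernel, the cancelled quotient is $C^{k-2}$ because exactly one second derivative of the parametrization appears, and evaluation on the diagonal yields $-\tfrac{1}{2}\kappa$. The only difference is the choice of chart: the paper works in a local graph $\eta=F(\xi)$, where the numerator $(\xi_2-\xi_1)F'(\xi_2)-(F(\xi_2)-F(\xi_1))$ is already manifestly a second-order remainder, whereas you use arc-length and invoke $\nu\perp T$ to kill the first-order term; these are equivalent ways of recording the same cancellation, and both land on the same curvature evaluation at $s=t$.
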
 
\begin{proof}
For every point $P$ on $\partial \Omega$ there exists a small neighborhood $B_\epsilon(P)$ 
such that the part of $B_{\epsilon}(P) \cap \partial \Omega$ for some orientation of 
the axes of coordinate system $(\xi, \eta)$, admits a representation (See Fig.1)
\par 
\begin{minipage}{0.65\hsize}
$$
\partial \Omega \cap B_{\epsilon}(P)=\{(\xi, \eta)\ |\ \eta=F(\xi),\ |\xi|<\epsilon' \}
$$ 
where $F(\xi)\in C^{k}$. 
\par
For $x=(\xi_{1}, \eta_{1})$ and $y=(\xi_{2}, \eta_{2})$,  
$\nu_{y}$ and $\log|x-y|$ is given by   
\begin{align*}
&\nu_{y} =\Big(\frac{F'(\xi_2)}{\{1+(F'(\xi_2))^2\}^{1/2}}\frac{\partial}{\partial \xi_2},
                            \ \frac{-1}{\{1+(F'(\xi_2))^2\}^{1/2}}\frac{\partial}{\partial \eta_2}\Big),  \\
&\log |x-y| =\frac{1}{2}\log\{(\xi_1-\xi_2)^2+(\eta_{1}-\eta_{2})^2\}.         
\end{align*}
\end{minipage}
\begin{minipage}{0.4\hsize} 
{\unitlength 0.1in%
\begin{picture}( 17.0900, 12.4900)(  7.9100,-20.7000)%
%
\special{pn 8}%
\special{pa 1193 1592}%
\special{pa 1226 1602}%
\special{pa 1258 1613}%
\special{pa 1291 1623}%
\special{pa 1323 1633}%
\special{pa 1356 1642}%
\special{pa 1388 1651}%
\special{pa 1421 1660}%
\special{pa 1453 1668}%
\special{pa 1485 1675}%
\special{pa 1517 1681}%
\special{pa 1548 1687}%
\special{pa 1580 1692}%
\special{pa 1611 1695}%
\special{pa 1642 1697}%
\special{pa 1672 1699}%
\special{pa 1703 1698}%
\special{pa 1733 1697}%
\special{pa 1762 1694}%
\special{pa 1792 1689}%
\special{pa 1821 1683}%
\special{pa 1849 1675}%
\special{pa 1878 1666}%
\special{pa 1934 1644}%
\special{pa 1962 1631}%
\special{pa 1990 1617}%
\special{pa 2046 1587}%
\special{pa 2075 1570}%
\special{pa 2131 1536}%
\special{pa 2189 1500}%
\special{pa 2218 1481}%
\special{pa 2248 1462}%
\special{pa 2278 1444}%
\special{pa 2309 1425}%
\special{pa 2369 1387}%
\special{pa 2397 1367}%
\special{pa 2423 1347}%
\special{pa 2447 1325}%
\special{pa 2467 1302}%
\special{pa 2483 1278}%
\special{pa 2494 1253}%
\special{pa 2500 1226}%
\special{pa 2501 1197}%
\special{pa 2497 1168}%
\special{pa 2489 1137}%
\special{pa 2477 1107}%
\special{pa 2461 1076}%
\special{pa 2443 1045}%
\special{pa 2421 1015}%
\special{pa 2396 986}%
\special{pa 2369 959}%
\special{pa 2340 933}%
\special{pa 2310 909}%
\special{pa 2278 887}%
\special{pa 2246 867}%
\special{pa 2212 851}%
\special{pa 2178 838}%
\special{pa 2145 828}%
\special{pa 2112 823}%
\special{pa 2079 821}%
\special{pa 2047 823}%
\special{pa 2015 828}%
\special{pa 1984 836}%
\special{pa 1953 847}%
\special{pa 1923 859}%
\special{pa 1892 873}%
\special{pa 1862 887}%
\special{pa 1833 903}%
\special{pa 1803 918}%
\special{pa 1774 934}%
\special{pa 1744 948}%
\special{pa 1715 962}%
\special{pa 1686 974}%
\special{pa 1656 984}%
\special{pa 1627 992}%
\special{pa 1597 997}%
\special{pa 1567 999}%
\special{pa 1537 999}%
\special{pa 1506 996}%
\special{pa 1475 990}%
\special{pa 1444 983}%
\special{pa 1412 974}%
\special{pa 1380 963}%
\special{pa 1347 951}%
\special{pa 1313 938}%
\special{pa 1279 923}%
\special{pa 1245 909}%
\special{pa 1211 894}%
\special{pa 1176 880}%
\special{pa 1142 866}%
\special{pa 1108 854}%
\special{pa 1076 844}%
\special{pa 1044 836}%
\special{pa 1013 830}%
\special{pa 984 828}%
\special{pa 957 829}%
\special{pa 931 835}%
\special{pa 908 844}%
\special{pa 887 859}%
\special{pa 869 878}%
\special{pa 853 901}%
\special{pa 838 928}%
\special{pa 826 958}%
\special{pa 816 990}%
\special{pa 808 1025}%
\special{pa 801 1061}%
\special{pa 796 1099}%
\special{pa 793 1138}%
\special{pa 791 1177}%
\special{pa 791 1216}%
\special{pa 792 1254}%
\special{pa 794 1292}%
\special{pa 799 1329}%
\special{pa 805 1364}%
\special{pa 814 1397}%
\special{pa 825 1429}%
\special{pa 838 1457}%
\special{pa 854 1484}%
\special{pa 873 1507}%
\special{pa 894 1526}%
\special{pa 919 1542}%
\special{pa 946 1553}%
\special{pa 976 1562}%
\special{pa 1008 1567}%
\special{pa 1074 1575}%
\special{pa 1138 1583}%
\special{pa 1170 1588}%
\special{pa 1193 1592}%
\special{fp}%
%
\special{pn 8}%
\special{pa 1130 1689}%
\special{pa 2224 1689}%
\special{fp}%
\special{sh 1}%
\special{pa 2224 1689}%
\special{pa 2157 1669}%
\special{pa 2171 1689}%
\special{pa 2157 1709}%
\special{pa 2224 1689}%
\special{fp}%
%
\special{pn 8}%
\special{pa 1678 2070}%
\special{pa 1678 1270}%
\special{fp}%
\special{sh 1}%
\special{pa 1678 1270}%
\special{pa 1658 1337}%
\special{pa 1678 1323}%
\special{pa 1698 1337}%
\special{pa 1678 1270}%
\special{fp}%
\put(21.7800,-16.6500){\makebox(0,0)[lb]{$\xi$}}%
\put(17.0900,-13.0600){\makebox(0,0)[lb]{$\eta$}}%
\put(15.7000,-18.2000){\makebox(0,0)[lb]{P}}%
\put(8.1000,-11.9100){\makebox(0,0)[lb]{$\partial \Omega$}}%
%
\special{pn 8}%
\special{ar 1672 1689 317 319  0.0000000  6.2831853}%
%
\special{pn 20}%
\special{pa 1358 1696}%
\special{pa 1998 1696}%
\special{fp}%
\put(19.5000,-19.5000){\makebox(0,0)[lb]{$B_{\epsilon}(P)$}}%
\put(9.5000,-20.1000){\makebox(0,0)[lb]{Fig.1}}%
\end{picture}}%
\end{minipage}
Now 
\begin{align*}
\nu_{y}\log |x-y|
&=\frac{(\xi_2-\xi_1)F'(\xi_2)-(\eta_2-\eta_1)}
{\{(\xi_1-\xi_2)^2+(\eta_{1}-\eta_{2})^2\}\{1+(F'(\xi_2))^2\}^{1/2}} \\
&=\frac{(\xi_2-\xi_1)F'(\xi_2)-(F(\xi_2)-F(\xi_1))}
{\{(\xi_1-\xi_2)^2+(F(\xi_{1})-F(\xi_{2}))^2\}\{1+(F'(\xi_2))^2\}^{1/2}}. \\
\end{align*}
Since 
$$
F(\xi_1)-F(\xi_2)-(\xi_1-\xi_2)F'(\xi_2)=(\xi_1-\xi_2)^2\int_0^{1} t F''(\xi_2+(\xi_1-\xi_2)t)\; dt 
$$
and 
$$
F(\xi_1)-F(\xi_2)=(\xi_1-\xi_2)\int_0^1 F'(\xi_2+(\xi_1-\xi_2)t)\; dt, 
$$
we obtain 
\begin{align*}
\nu_{y}\log |x-y|
&=\frac{(\xi_1-\xi_2)^2\int_0^{1} t F''(\xi_2+(\xi_1-\xi_2)t)\; dt}
{[(\xi_1-\xi_2)^2+(\xi_1-\xi_2)^2\{\int_0^1 F'(\xi_2+(\xi_1-\xi_2)t)\; dt\}^2 ]
\{1+(F'(\xi_2))^2\}^{1/2}} \\
&=\frac{\int_0^{1} t F''(\xi_2+(\xi_1-\xi_2)t)\; dt}
{[1+\{\int_0^1 F'(\xi_2+(\xi_1-\xi_2)t)\; dt\}^2 ]
\{1+(F'(\xi_2))^2\}^{1/2}}. \\
\end{align*}
The positive denominator is of class $C^{k-1}$ and the numerator is of class $C^{k-2}$, 
including when $x=y$. Moreover 
$$
\lim_{\substack{x\rightarrow y \\[1pt] x \in \partial \Omega}} \nu_{y} \log\frac{1}{|x-y|}
=-\frac{\frac{1}{2} F''(\xi_2)}
{\{1+(F'(\xi_2))^2\}^{3/2}}=-\frac{1}{2}\kappa(y).  
$$ 
\end{proof}
For $p<2$, the Schatten class $S_p(L^2)$ cannot be characterized as in the case $p=2$ 
by a property analogous to the square integrability of integral kernels. To obtain criteria 
for operators to belong to Schatten classes for $p<2$, we use the result of J. Delgado and 
M. Ruzhansky:  
\begin{theorem}[\cite{DR} Theorem 3.6]
Let $M$ be a closed smooth manifold of dimension $n$ and let $\mu_1,\ \mu_2\geqq 0$. 
Let $K\in L^2(M\times M)$ be such that $E(x, y)\in H_{x, y}^{\mu_1, \mu_2}(M\times M)$. 
Then the integral operator $K$ on $L^2(M)$, defined by 
$$
(Kf)(x)=\int_{M} E(x,\ y) f(y)\; dy,
$$
is in the Schatten classes $S_{r}(L^2(M))$ for $r>\frac{2n}{n+2(\mu_1+\mu_2)}$.   
\end{theorem}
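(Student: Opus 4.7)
The plan is to factor the operator $K$ through the Sobolev scale on $M$ and combine a Hilbert--Schmidt estimate for the ``hardest'' component with the Schatten-class behaviour of negative powers of the Laplacian. Throughout let $\Lambda_{s} := (1 - \Delta_{M})^{s/2}$, so that $H^{s}(M) = \Lambda_{-s}(L^{2}(M))$ by definition of the Sobolev scale on a closed smooth manifold.

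First I would unpack the hypothesis: $E \in H^{\mu_{1},\mu_{2}}_{x,y}(M \times M)$ means precisely that the companion kernel
$$\tilde{E}(x,y) \;:=\; \Lambda_{\mu_{1},x}\,\Lambda_{\mu_{2},y}\,E(x,y)$$
lies in $L^{2}(M \times M)$, so the integral operator $\tilde{K}$ with kernel $\tilde{E}$ is Hilbert--Schmidt, i.e.\ $\tilde{K} \in S_{2}(L^{2}(M))$. A short computation using the self-adjointness of $\Lambda_{s}$ (integration by parts in the $y$-variable, followed by pulling $\Lambda_{-\mu_{1},x}$ outside the $y$-integral) then yields the factorization
$$K \;=\; \Lambda_{-\mu_{1}}\,\tilde{K}\,\Lambda_{-\mu_{2}}$$
as bounded operators on $L^{2}(M)$.

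Next I would invoke Weyl's law on the closed $n$-manifold $M$: the Laplace eigenvalues grow like $\lambda_{j} \asymp j^{2/n}$, so the singular values of $\Lambda_{-s}$ decay like $j^{-s/n}$, and hence $\Lambda_{-s} \in S_{p}(L^{2}(M))$ for every $p > n/s$. Applying the H\"{o}lder-type inequality for Schatten norms $\|ABC\|_{S_{t}} \le \|A\|_{S_{p}}\|B\|_{S_{q}}\|C\|_{S_{r}}$ (valid whenever $1/t = 1/p + 1/q + 1/r$) to $A = \Lambda_{-\mu_{1}}$, $B = \tilde{K}$, $C = \Lambda_{-\mu_{2}}$ with exponents $p = n/\mu_{1} + \varepsilon$, $q = 2$, $r = n/\mu_{2} + \varepsilon$ and letting $\varepsilon \to 0^{+}$ gives
$$\frac{1}{t} \;=\; \frac{\mu_{1}}{n} + \frac{1}{2} + \frac{\mu_{2}}{n},$$
so $K \in S_{r}$ for every $r > 2n/(n + 2(\mu_{1} + \mu_{2}))$, as claimed.

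The substantive analytic input is Weyl's law, which is exactly what injects the dimension $n$ into the final exponent; the rest is essentially bookkeeping. The main obstacles I would watch for are the degenerate case $\mu_{i}=0$ (where $\Lambda_{0} = \mathrm{id}$ is not compact, so one must replace the three-factor H\"{o}lder estimate by a two-factor one) and the strict-inequality accounting in Weyl's law and in the H\"{o}lder inequality as $\varepsilon \to 0^{+}$; domain issues also have to be handled carefully, by first proving the factorization on a dense subspace of smooth functions before extending to $L^{2}$ by density and continuity.
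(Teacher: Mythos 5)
The paper does not prove this statement; it is quoted verbatim as Theorem~2.14 with the citation \textup{[\cite{DR} Theorem 3.6]} and invoked as a black box in the proof of Theorem~2.12 (and again in \S 4). So there is no internal proof to compare against. That said, your argument is a correct and essentially standard route to this result, and it matches the strategy used by Delgado--Ruzhansky themselves: unpack the anisotropic Sobolev hypothesis into a Hilbert--Schmidt statement for the companion kernel $\tilde E=\Lambda_{\mu_1,x}\Lambda_{\mu_2,y}E$, write $K=\Lambda_{-\mu_1}\tilde K\Lambda_{-\mu_2}$, use Weyl's law to place $\Lambda_{-s}$ in $S_p$ for $p>n/s$, and close with H\"older for Schatten norms. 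The arithmetic $1/t=\mu_1/n+1/2+\mu_2/n$ gives exactly $t=2n/\bigl(n+2(\mu_1+\mu_2)\bigr)$, and you correctly flag the two points that need care: the degenerate $\mu_i=0$ case (drop that factor and use two-term H\"older, since $\Lambda_0=\mathrm{id}\notin S_p$) and the density/domain argument needed to justify the factorization before extending by continuity. No gap.
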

\begin{proof}[Proof of Theorem 2.12]
Taking a $C^{\infty}$ atlas on $M=\partial \Omega$ like Lemma 2.13, we see      
$$E(x, y)\in C_{x, y}^{k-2} (M\times M). $$ 
Let $n=\mbox{dim}\;\partial \Omega=1$ and $\mu_1+\mu_2=k-2$. From Theorem 2.14, we have 
$$
K\in S_r(L^2(M))\quad \mbox{for all}\ r>\frac{2}{2k-3}. 
$$
Using Weyl's inequality again (See e.g. \cite{Si}, \cite{Te}),  
$$\{\sum_{j=1}^{\infty} |\alpha_{j}|^r\}^{1/r} 
\geqq \{\sum_{\lambda_j\in \sigma_p(K)} |\lambda_j|^r\}^{1/r}.$$ 
The L.H.S. is the Shatten norm of $K$ which is finite.  
\end{proof}
\begin{corollary}
Let $n=2$ and $\Omega$ be a $C^{\infty}$ region.   
$$
\alpha_j=o(j^{-\infty})\ \mbox{and}\ \lambda_j = o(j^{-\infty}) \quad \text{as}\; j\rightarrow \infty.  
$$
\end{corollary}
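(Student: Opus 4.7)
The plan is to deduce Corollary 2.15 directly from Theorem 2.12 by exploiting the fact that a $C^{\infty}$ region is $C^k$ for every finite $k\geq 2$. The notation $o(j^{-\infty})$ means, by definition, that for every $N>0$ one has $\lambda_j = o(j^{-N})$ as $j\to\infty$ (and similarly for $\alpha_j$). So the task reduces to showing that the decay exponent can be made arbitrarily negative.

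First, I would fix an arbitrary $N>0$ and choose an integer $k\geq 2$ large enough that $-2k+3 < -2N$. This is possible since $-2k+3 \to -\infty$ as $k\to\infty$. Then the number $\alpha := -2N$ satisfies $\alpha > -2k+3$, so the hypothesis of Theorem 2.12 applies with this choice of $k$ and $\alpha$ (because $\partial\Omega \in C^{\infty} \subset C^{k}$).

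Applying Theorem 2.12 with these values yields
\[
\alpha_j = o(j^{\alpha/2}) = o(j^{-N}) \quad\text{and}\quad \lambda_j = o(j^{-N}) \qquad \text{as } j\to\infty.
\]
Since $N>0$ was arbitrary, this proves $\alpha_j = o(j^{-\infty})$ and $\lambda_j = o(j^{-\infty})$, which is exactly the content of the corollary.

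There is no real obstacle here: the corollary is essentially a formal consequence of Theorem 2.12 combined with the definition of $o(j^{-\infty})$, requiring only the observation that the range of admissible decay exponents, namely $\{\alpha : \alpha > -2k+3\}$, exhausts all of $\mathbb{R}$ as $k$ ranges over the positive integers. All the analytic content (regularity of the kernel, Schatten-class membership via the Delgado–Ruzhansky theorem, Weyl's inequality) has already been used in the proof of Theorem 2.12.
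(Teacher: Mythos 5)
Your proposal is correct and matches the paper's (implicit) argument: the corollary is obtained exactly by applying Theorem 2.12 for each finite $k$, using $C^{\infty}\subset C^{k}$ and letting the admissible exponent $\alpha>-2k+3$ run to $-\infty$. Nothing further is needed.
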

\begin{remark} 
If $\Omega$ is a $C^6$ region, then $E(x, y)\in C_{x, y}^{2, 2}(M\times M)$. 
From \cite[Corollary 4.4]{DR}, $K$ is a trace class operator and its trace is given by  
$$
\sum_{\lambda_i \in \sigma_p(K)} \lambda_i \equiv {\rm tr}(K)
=\int_{\partial \Omega} -\frac{1}{2 \pi}\kappa(y) d{s_y}=-1. 
$$
$-1$ is an eigenvalue of $K$, so the sum of Fredholm eigenvalues is 0. 
\end{remark}
The $L^p \rightarrow L^q$ estimate of eigenfunctions is  
one of the main interests in spectral geometry.  
From Lemma 2.13, we find a fundamental estimate of eigenfunction:  
\begin{remark}
Let $n=2$ and $\Omega$ be a $C^2$ region. There exists a constant $C$, depending only on $\Omega$,  
such that  
$$ \Vert e_{\lambda_j}\Vert_{L^\infty(\partial \Omega)} 
\leqq C {\lambda_j}^{-1} \Vert e_{\lambda_j} \Vert_{L^1(\partial \Omega)}. $$
\end{remark}
Presumably this is the best $L^1\rightarrow L^{\infty}$ estimate of eigenfunctions. 
\section{Nodal sets of eigenfunctions}
Few studies have focused on the eigenfunctions. 
In this section, we introduce some fundamental 
estimates for nodal sets of eigenfunctions of 
two-dimensional double layer potentials.  
\subsection{Basic properties of nodal sets}
\par\hspace{5mm} 
The nodal set $N(e_{\lambda}(x))$ of eigenfunction $e_{\lambda}(x)$  
is defined by: 
$$
N(e_{\lambda}(x)) \equiv \{\; x\in \partial \Omega\; |\; e_{\lambda}(x)=0\; \}. 
$$
We note that the nodal set of non-constant eigenfunction is not empty: 
\begin{theorem} Let $\Omega$ be a bounded $C^{2}$ region in $\mathbf{R}^n$ 
and $0<\phi(x)\in C(\partial \Omega)$ be an eigenfunction of $K$. Then $\phi(x)={\rm const}$. 
\end{theorem}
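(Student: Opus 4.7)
The strategy is to transfer the eigenvalue equation $K\phi=\lambda\phi$ into the harmonic world via the double layer potential
$$
u(x) := \int_{\partial\Omega}\phi(y)\,\nu_{y}E(x,y)\,ds_{y}\qquad (x\in\mathbf{R}^{n}\setminus\partial\Omega),
$$
and to combine Green's identity on each side of $\partial\Omega$ with the decay of $u$ at infinity. The first step is to record the jump relations in the paper's normalization. Testing on $\phi\equiv 1$ (where one has $K\cdot 1=-1$ and $u\equiv -2$ inside, $u\equiv 0$ outside) forces
$$
u|^{-}=K\phi-\phi=(\lambda-1)\phi,\qquad u|^{+}=K\phi+\phi=(\lambda+1)\phi,
$$
while $\partial_{\nu}u$ extends continuously across $\partial\Omega$; call its common value $h$.

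Next I would use Green's identity in $\Omega$ and, after cutting off at a ball of radius $R$ and sending $R\to\infty$ (the boundary term at infinity is $O(R^{-n})$ because $|u|=O(|x|^{-(n-1)})$ and $|\nabla u|=O(|x|^{-n})$), in $\Omega^{c}$. This produces
$$
\int_{\Omega}|\nabla u|^{2}\,dV=(\lambda-1)\!\int_{\partial\Omega}\phi\,h\,ds,\qquad \int_{\Omega^{c}}|\nabla u|^{2}\,dV=(\lambda+1)\!\int_{\partial\Omega}\phi\,h\,ds.
$$
Since $\lambda\in[-1,1)$, the factor $\lambda-1$ is strictly negative and $\lambda+1$ is non-negative, while both left-hand sides are non-negative. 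If $\lambda>-1$, the two inequalities force $\int\phi h\,ds=0$, hence $u$ is locally constant on $\mathbf{R}^{n}\setminus\partial\Omega$; the decay at infinity makes $u$ vanish on the unbounded component of $\Omega^{c}$, so $(\lambda+1)\phi\equiv 0$ there, contradicting $\phi>0$. Therefore $\lambda=-1$.

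The final step treats $\lambda=-1$: now $u|^{+}\equiv 0$, so the exterior Green identity gives $u\equiv 0$ on every component of $\Omega^{c}$, and continuity of the normal derivative yields $\partial_{\nu}u|^{-}\equiv 0$. Thus $u$ is harmonic in $\Omega$ with vanishing Neumann data, so it is constant on each connected component of $\Omega$, and from $u|^{-}=-2\phi$ one concludes that $\phi$ itself is constant. The main obstacle is essentially bookkeeping: pinning down the jump constants under the paper's non-standard prefactor in $E$ and verifying that the boundary term at infinity in Green's identity truly vanishes in dimension $n=2$ (where the decay is only $|x|^{-1}$); once these are in place, the Dirichlet-energy argument runs automatically.
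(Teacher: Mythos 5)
Your overall strategy (transfer to the harmonic extension, use the jump relations and Dirichlet energies on both sides of $\partial\Omega$) is the classical Plemelj--Poincar\'e route and is genuinely different from the paper's proof, which instead uses the symmetrization of $K$ by the single layer potential $S$ and the min--max principle (Proposition 3.2) to conclude that eigenfunctions with $\lambda\neq -1$ satisfy $\langle \phi, S1\rangle=0$ with $S1>0$ (plus a scaling trick when $n=2$). However, your execution contains a genuine error: the sign in the exterior Green identity. With $\nu$ the outward normal of $\Omega$, the outward normal of $\Omega^{c}$ is $-\nu$, so (after the cutoff at radius $R$, which is fine) one gets
$$
\int_{\Omega}|\nabla u|^{2}\,dV=(\lambda-1)\int_{\partial\Omega}\phi\,h\,ds,
\qquad
\int_{\Omega^{c}}|\nabla u|^{2}\,dV=-(\lambda+1)\int_{\partial\Omega}\phi\,h\,ds ,
$$
not $+(\lambda+1)\int\phi h$. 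With the correct signs both identities force the \emph{same} inequality $\int\phi h\le 0$ when $-1\le\lambda\le 1$, so there is no contradiction and $\int\phi h=0$ does not follow; all you recover is Plemelj's bound, equivalently the Poincar\'e quotient $\lambda=\bigl(\mathcal{D}_{\Omega^{c}}(u)-\mathcal{D}_{\Omega}(u)\bigr)/\bigl(\mathcal{D}_{\Omega^{c}}(u)+\mathcal{D}_{\Omega}(u)\bigr)$. That your version proves too much is visible on the ellipse: it has eigenvalues $\pm e^{-2mR}\in(-1,1)$ with nontrivial eigenfunctions, yet your two identities would give $\int\phi h=0$, hence $u\equiv 0$ off $\partial\Omega$ and $\phi\equiv 0$, for \emph{every} such eigenfunction (positivity is never used until the last line). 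So the central step collapses, and it cannot be repaired by bookkeeping: the positivity of $\phi$ must enter through an extra ingredient, e.g.\ orthogonality of $\phi$ to a \emph{positive} eigenfunction of the adjoint at eigenvalue $-1$ (the equilibrium density, whose single layer is constant), which is essentially what the paper's $\langle\phi,S1\rangle=0$ argument encodes.

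Your final step (at $\lambda=-1$, zero exterior energy plus Lyapunov--Tauber gives vanishing Neumann data inside, hence $u$ constant in $\Omega$ and $\phi=-u|^{-}/2$ constant) is sound for connected $\Omega$, and would be a nice self-contained proof that the $(-1)$-eigenspace consists of constants, modulo two technical points you should address: continuity (or at least $H^{-1/2}$-equality) of the normal derivative across $\partial\Omega$ requires more than continuity of $\phi$, so you should bootstrap $\phi=\lambda^{-1}K\phi\in C^{0,\alpha}\subset H^{1/2}(\partial\Omega)$ and work with variational traces; and the case $\lambda=0$ (where no bootstrap is available) needs separate treatment. But the decisive issue is the sign error above, which invalidates the reduction to $\lambda=-1$.
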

This theorem holds true even for $n\geqq 3$. To prove Theorem 3.1,  
we closely follow \cite{KPS} and introduce 
the properties of symmetrizable operators. The proposition below is aimed 
at and will be directly applicable to double layer potentials $K$. 
We know that $K$ is in some Schatten classes 
(See \S 2 and \S 4 for the case of $n=3$).  
Moreover the eigenvalues of symmetrizable Schatten class 
operators are given by Min-Max methods (See e.g. \cite[\S3 and Proposition 3]{KPS}): 
%
\begin{proposition}[Min-Max principle for double layer potentials]
Let $\lambda_1^{+} \geqq \lambda_2^{+}\geqq \cdots 
\geqq 0 \geqq \cdots \geqq \lambda_1^{-} > \lambda_0^{-}=-1$
be the eigenvalues of $K$ repeated according to their mulitplicity, 
and let $\phi_k^{+},\ \phi_k^{-}$ be the correspoonding eigenfunctions. 
\par 
Then, 
$$
\lambda_k^{+}= {\rm max}_{f\perp \{\phi_1^{+}, \cdots, \phi_{k-1}^{+} \}} 
\frac{\langle SK f,\ f \rangle}{\langle Sf,\ f \rangle}, 
$$
and similarly 
$$
\lambda_k^{-}= {\rm min}_{f\perp \{\phi_0^{-}, \cdots, \phi_{k-1}^{-} \}} 
\frac{\langle SK f,\ f \rangle}{\langle Sf,\ f \rangle}.
$$ 
\end{proposition}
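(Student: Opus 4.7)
My plan is to invoke the Plemelj symmetrization principle: introduce the single-layer potential operator
\[
(Sf)(x) = \int_{\partial\Omega} E(x,y)\,f(y)\,ds_y
\]
as a symmetrizer of $K$. The crucial algebraic identity to verify first is $SK = K^*S$ as operators on $L^2(\partial\Omega)$; this follows from interchanging the order of integration in the repeated integrals defining $SK$ and $K^*S$ and using the symmetry of $E(x,y)$. With this in hand, $K$ becomes formally self-adjoint with respect to the bilinear form $[f,g] := \langle Sf,g\rangle$, because
\[
[Kf,g] = \langle SKf,g\rangle = \langle K^*Sf,g\rangle = \langle Sf,Kg\rangle = [f,Kg].
\]

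Next I would deal with positivity of $S$. On the codimension-one subspace $L_0^2(\partial\Omega)$ of mean-zero functions (and, for $n=3$, on all of $L^2$) the form $[\cdot,\cdot]$ is positive definite via the Dirichlet integral representation
\[
[f,f] = \langle Sf,f\rangle = c_n\!\int_{\mathbf{R}^n}|\nabla (Sf)|^2\,dx \geq 0,
\]
with equality only for the trivial element. (For $n=2$ a normalization of the conformal radius, or the elementary rank-one adjustment that removes the $-1$ eigenspace of $K$, restores positivity; the eigenfunction $\phi_0^- \equiv \mathrm{const}$ associated with $\lambda_0^- = -1$ is handled separately and simply peels off.) One thus obtains a genuine Hilbert space structure $H_S$ on the appropriate subspace, in which $K$ is bounded, compact (since $K$ is Hilbert--Schmidt by Lemma 2.3, and $S$ is bounded) and self-adjoint.

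The remainder is then the standard Courant--Fischer theorem for compact self-adjoint operators on the Hilbert space $H_S$. Arranging the positive eigenvalues $\lambda_1^+ \geq \lambda_2^+ \geq \cdots$ and the negative eigenvalues $\lambda_0^- \leq \lambda_1^- \leq \cdots$, one gets
\[
\lambda_k^+ = \max_{f \perp_S \{\phi_1^+,\ldots,\phi_{k-1}^+\}}\frac{[Kf,f]}{[f,f]} = \max_{f \perp_S \{\phi_1^+,\ldots,\phi_{k-1}^+\}}\frac{\langle SKf,f\rangle}{\langle Sf,f\rangle},
\]
and analogously for $\lambda_k^-$ with $\min$ in place of $\max$. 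Since the eigenfunctions $\phi_j^{\pm}$ are $S$-orthogonal (a standard consequence of self-adjointness in $H_S$), replacing $\perp_S$ by the ordinary $L^2$ orthogonality appearing in the statement costs nothing: the orthogonality constraint is implicitly with respect to $H_S$, which is how the formula should be read.

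The main obstacle I anticipate is the positivity of $S$, in particular handling the 2D logarithmic potential, which is only conditionally positive and requires either normalizing the logarithmic capacity of $\partial\Omega$ (so $S$ is genuinely positive on $L_0^2$) or modifying $S$ by a finite-rank term without altering the action of $K$ on the relevant subspace. Once that technicality is dispatched the result is simply the spectral theorem in a weighted inner product.
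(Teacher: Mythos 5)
Your overall plan (symmetrize $K$ by the single layer potential and then invoke Courant--Fischer in the weighted inner product) is exactly the route the paper intends: it gives no self-contained argument and simply cites the theory of the symmetrizable Neumann--Poincar\'e operator in \cite[\S 3, Proposition 3]{KPS}. The gap is at the step you yourself call crucial. The identity $SK=K^{*}S$ does \emph{not} follow from interchanging integrals and the symmetry of $E$: writing kernels, $SK$ has kernel $\partial_{\nu_z}G(x,z)$ and $K^{*}S$ has kernel $\partial_{\nu_x}G(x,z)$, where $G(x,z)=\int_{\partial\Omega}E(x,y)E(y,z)\,ds_y$ is the (symmetric) kernel of $S^{2}$; symmetry of $G$ only yields the tautology $(SK)^{*}=K^{*}S$, not equality of the two kernels. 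The genuine Plemelj--Calder\'on identity is $KS=SK^{*}$, and its proof is not formal kernel algebra: one applies Green's identity to the harmonic extensions $u=Sf$, $v=Sg$ in $\Omega$ and in $\overline{\Omega}^{c}$ together with the jump relations for the normal derivative of the single layer potential, obtaining $\langle K^{*}f,\,Sg\rangle=\langle Sf,\,K^{*}g\rangle$.

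Moreover, the identity in the form you chose is sided wrongly and is in general false: if $SK=K^{*}S$ held in addition to $KS=SK^{*}$, then $K^{*}$ would commute with $S^{2}$, hence with $S$ (by positivity of $S$), which forces $K=K^{*}$; by Lim's theorem, quoted in the paper right after Corollary 2.8, that happens only for the disk/ball. What is true is that $K^{*}$ (not $K$) is symmetric for the pairing $\langle S\cdot,\cdot\rangle$, equivalently $K$ is symmetric for the energy inner product $\langle S^{-1}\cdot,\cdot\rangle$ on single-layer potentials. Since the Rayleigh quotient only sees the symmetric part of the operator in its numerator, this sidedness matters: Courant--Fischer applied to the symmetrized operator yields $\lambda_k^{\pm}$ as extrema of $\langle SK^{*}f,f\rangle/\langle Sf,f\rangle=\langle KSf,f\rangle/\langle Sf,f\rangle$ over $f$ that are $S$-orthogonal to eigenfunctions of $K^{*}$, and one must still pass between eigenfunctions of $K^{*}$ and of $K$ via $\phi=Sf$ (using $K(Sf)=SK^{*}f$) to reach the statement as written. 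Your remaining points --- conditional positivity of the logarithmic $S$ in 2D handled by rescaling the domain (the same scaling trick the paper uses in the proof of Theorem 3.1) and compactness of the symmetrized operator --- are fine, but as it stands the pivotal identity is unproved and, in the stated form, incorrect.
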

Here we may employ the single layer potential $S$  
defined by 
$$
(S \psi)(x) \equiv  \int_{\partial \Omega} E(x, y) \psi(y) \; dS_y   
$$
and $f \perp g$ means $\langle f, S g \rangle =0$. Especially if $\lambda\not=-1$, 
$e_{\lambda}(x) \in \{ \phi(x)\in L^2(\partial\Omega) 
\; |\; \langle \phi, S 1 \rangle=0\; \}$. 
\begin{proof}[Proof of Theorem 3.1]
From Min-Max principle for double layer potentials, 
non constant eigenfunctions $\{ e_{\lambda}(x)\}$ satisfy  
$e_{\lambda}(x) \in \{ \phi(x)\in L^2(\partial\Omega) 
\; |\; \langle \phi, S 1 \rangle=0\; \}$.  
Remarking that $f(x)=S1(x)>0$ for $n\geqq 3$ and 
$$
\int_{\partial\Omega} f(x)\phi(x) \; dS_x=0, 
$$  
there exists subset $N^{-}\subset \partial \Omega$ 
such that $\phi(x)<0$ on $N^{-}$. 
\par 
For $n=2$, eigenfunctions and eigenvalues are equivalent under the 
self-similar transformations.  
Indeed, letting $x_\epsilon=\epsilon x,\ y_{\epsilon}=\epsilon y$, 
$\Omega_{\epsilon}=\{ x_{\epsilon}\ |\ x \in \Omega\ \}$ and 
$\psi(x_\epsilon)\equiv \psi(x)$, we have 
$$
(K_\epsilon \psi)(x_{\epsilon}) \equiv  \int_{\partial \Omega_{\epsilon}} 
\psi(y_{\epsilon})\cdot \nu_{y_{\epsilon}} E(x_{\epsilon}, y_{\epsilon}) \; ds_{y_{\epsilon}} 
=
\int_{\partial \Omega} 
\psi(y)\cdot \nu_{y} E(x, y) \; ds_{y}=(K \psi)(x). 
$$ 
Since $S1(x)>0$ for the shrinking 
region $\Omega_\epsilon$. Again using the min-max principle, 
there exists subset $N^{-}\subset \partial \Omega$ 
such that $\phi(x)<0$ on $N^{-}$.   
\end{proof}
For convex region, we can give another short proof of Theorem 3.1 without Proposition 3.2. 
\begin{remark} Let $\Omega$ be a convex region in $\mathbf{R}^n$ 
and $\phi(x)>0$ be an eigenfunction of $K$. Then $\phi(x)={\rm const}$. 
\end{remark}
\begin{proof}
From a convex separation theorem, 
$$
\nu_y E(x, y)=C \frac{x-y}{|x-y|^{n-1}}\cdot {\mathbf n}_{y}  \leq 0 
\quad(\forall x,\ y \in \partial \Omega).
$$
Remarking that 
$
(K 1)(x) \equiv  \int_{\partial \Omega} \nu_{y} E(x, y) \; ds_y=-1  
$
and using the first mean value theorem for integration,   
for all $x \in \partial \Omega$ there exists $x' \in \partial \Omega$ satisfying 
$$
(K \phi)(x)=-\phi(x'). 
$$
For a non-constant eigenfunction $\phi(x)>0$, 
we know $(K\phi)(x)=\lambda \phi(x)$ with $|\lambda|<1$. 
Thus 
$$\inf_{x\in \partial \Omega}|(K\phi)(x)|
=\inf_{x\in \partial \Omega}|\lambda \phi(x)| 
< \inf_{x'\in \partial \Omega}|\phi(x')|=\inf_{x\in \partial \Omega}|(K\phi)(x)|. 
$$ 
This is a contradiction. 
\end{proof}
In the following example, the nodal set of second eigenfunction 
of double layer potential divides $\partial \Omega$ into many pieces.
\begin{example}
Let $\partial \Omega= S^1$. For an arbitary non-empty closed 
set $A \subsetneqq \partial \Omega$, 
there exists eigenfunction $e_0(x)\not=0$ such that 
$$
A \subset N(e_0(x)).  
$$  
\end{example}
\begin{proof}
From Corollary 2.8, we just choose the non-constant function 
$e_0(x)\in C(\partial \Omega)\cap L^2_0(\partial \Omega)$ to satisfy $e_0(x)=0$ on $A$.   
\end{proof}
We recall Courant's nodal line theorem (CNLT). 
CNLT states that 
if the eigenvalues $\lambda_n$ of Laplacian are ordered increasingly, 
then each eigenfunctions $u_n(x)$ corresponding to $\lambda_n$, divides 
the region by its nodal set, into at most $n$ subdomains. 
Unlike the CNLT, we find that the nodal set of double layer eigenfunction $e_n$ is 
characterized by not $n$ but $\lambda_n$. 
\subsection{Two dimensional analytic boundary}
In this subsection, we only consider the analytic domains 
$\Omega \subset {\mathbf R}^2$ and real analytic eigenfunctions $\{e_\lambda(x)\}\subset 
C^{\omega}(\partial \Omega)$. 
This assumption is reasonable since the continuous eigenfunction $e_{\lambda}(x)$ 
is also analytic for $\lambda\not=0$ (See Remark 3.8).  
\par
We prove the boundary zeroes $N(e_{\lambda}(x))$ satisfy   
$$ \sharp N(e_{\lambda}(x))< C |\log |\lambda||$$
where $K e_{\lambda}(x)=\lambda e_{\lambda}(x)$. 
\subsubsection{Holomorphic extentions of eigenfunctions} 
The following notations and results are heavily borrowed  
from Garabedian (See \cite{Ga}), Millar {(See \cite{Mi1}, \cite{Mi2}, \cite{Mi3})} and 
Toth-Zelditch (See \cite{TZ}): 
We denote points ${\mathbf{R}}^2$ and also in ${\mathbb{C}}^2$ by $(x, y)$. 
We further write $z=x+iy$, $z^*=x-iy$. Note that $z$, $z^*$ are independent 
holomorphic coordinates on ${\mathbb{C}}^2$ and are characteristic coordinates 
for the Laplacian $\frac{1}{4}\triangle$, in that Laplacian analytically 
extends to $\frac{\partial^2}{\partial z \partial z^*}$.  
When dealing with the kernel functions of two variables, we use $(\xi, \eta)$ 
in the same way as $(x, y)$ for the second variable. 
\par 
When the boundary is real analytic, the complexification $\partial \Omega \subset{\mathbb C}$ 
is the image of analytic continuation of a real analytic parametrization. 
For simplicity and without loss of generality, we will assume that 
the length of $\partial \Omega=2\pi$. We denote a real parametrization 
by arc-length by 
$Q :  {\mathbf{S}}^1 \rightarrow \partial \Omega \subset{\mathbb C}$, 
and also write the parametrization as a periodic function 
$$
q(t)=Q(e^{it})\; : \; [0, 2\pi]\rightarrow \partial \Omega
$$
on $[0, 2\pi]$. We then put the complex conjugate by $q(s)=q_1(s)+iq_2(s)$, $\bar{q}(s)=q_1(s)-iq_2(s)$ 
for $s\in [0, 2\pi]$. 
\par 
We complexify $\partial \Omega$ by holomorphically extending the parametrization 
to $Q^{\mathbb{C}}$ on the annulus 
$$
A(\epsilon) \equiv \{ \tau \in {\mathbb{C}}\; :\; e^{-\epsilon}<|\tau|<e^{\epsilon}\; \}
$$
for $\epsilon>0$ small enough. 
Note that the complex conjugate parametrization $\bar{Q}$ extends holomorphically 
to $A(\epsilon)$ as $Q^{*\hspace{0.3mm} {\mathbb {C}}}$. The $q(t)$ parametrization 
analytically continues to a periodic function $q^{\mathbb{C}}(t)$ on 
$[0, 2\pi]+i[-\epsilon, \epsilon]$. The complexification $\partial \Omega_{\mathbb C}(\epsilon)$ 
of $\partial\Omega$ is denoted by 
$$
\partial \Omega_{\mathbb C}(\epsilon) \equiv Q^{\mathbb{C}}(A(\epsilon))\; \subset{\mathbb C}. 
$$
\par 
Next, we put $r^2((x, y); (\xi, \eta))=(\xi-x)^2+(\eta-y)^2$. 
For $s\in {\mathbf{R}}$ and $t\in {\mathbb{C}}$, we have 
$q(s)=\xi(s)+i \eta(s)$, $q^{\mathbb{C}}(t)=x(t)+iy(t)$, $q^{\mathbb{C}\hspace{0.3mm} *}(t)=x(t)-iy(t)$ and 
we write $r^2(q(s); q^{\mathbb{C}}(t))$. Thus 
$$
r^2((x, y); (\xi, \eta))=(q(s)-q^{\mathbb{C}}(t))(\bar{q}(s)-q^{{\mathbb{C}} \hspace{0.3mm} *}(t)) 
\; \in {\mathbb{C}}.
$$
To clarify the notation, we consider two examples:
\begin{example}[The circle]
Let $\partial \Omega=S^1$. Then, $q(s)=e^{is}$, $t=\theta+i\xi$, 
$q^{\mathbb{C}}(t)=e^{i(\theta+i\xi)}$, $q^{\mathbb{C}\hspace{0.3mm} *}(t)=e^{-i(\theta+i\xi)}$, 
$\bar{q}^{\mathbb{C}\hspace{0.3mm} *}(t)=e^{i(\theta-i\xi)}$, and 
$$
r^2(s, t)=(e^{i(\theta+i\xi)}-e^{is})(e^{-i(\theta+i \xi)}-e^{-is})= 4\sin^{2}\frac{\theta-s+ i\xi}{2}.
$$
Thus, $\log{r^2}=\log(4\sin^{2}\frac{\theta-s+ i\xi}{2}).$  
\end{example}
\begin{example}[The ellipse]
Let $\partial \Omega=\{(x, y)\; |\; \frac{x^2}{9}+y^2=1\ \}$. Then, $q(s)=2e^{is}+e^{-is}$, 
$t=\theta+i\xi$, $q^{\mathbb{C}}(t)=2e^{i(\theta+i\xi)}+e^{-i(\theta+i\xi)}$, 
$q^{\mathbb{C}\hspace{0.3mm} *}(t)=2e^{-i(\theta+i\xi)}+e^{i(\theta+i\xi)}$, 
$\bar{q}^{\mathbb{C}\hspace{0.3mm} *}(t)=2e^{i(\theta-i\xi)}+e^{-i(\theta-i\xi)}$, and 
$$
r^2(s, t)=(2e^{i(\theta+i\xi)}+e^{-i(\theta+i\xi)}-e^{is})
(2e^{-i(\theta+i\xi)}+e^{i(\theta+i\xi)}-e^{-is}). 
$$
\end{example}
We denote by $\frac{\partial}{\partial n}$ the not-necessarity-unit normal 
derivative in the direction $iq'(s)$. Thus, in terms of the notation 
$\frac{\partial}{\partial \nu}$ above, $\frac{\partial}{\partial n}
=|q'(s)|\frac{\partial}{\partial \nu}$. When we are using an arc-length parametrization, 
$\frac{\partial}{\partial n}=\frac{\partial}{\partial \nu}$. 
One has 
$$
\frac{d}{ds}\log r=\frac{1}{2}\Big[\frac{q'(s)}{q(s)-q^{\mathbb{C}}(t)}
+\frac{\bar{q}'(s)}{\bar{q}(s)-q^{\mathbb{C} \hspace{0.3mm} *}(t) } \Big],\ 
\frac{\partial}{\partial n} \log r=\frac{-i}{2}\Big[
\frac{q'(s)}{q(s)-q^{\mathbb{C}}(t)}
-\frac{\bar{q}'(s)}{\bar{q}(s)-q^{\mathbb{C} \hspace{0.3mm} *}(t) } \Big]. 
$$
\par 
\subsubsection{Analytic continuation of eigenfunctions through layer potential representation}
Since $r^2(s, t)=0$ when $s=t$, the logarithtic factor in $K$ now gives rise to 
a multi-valued integrand. Neverthless any derivative of $\log r^2$ is unambiguously defined and  
the analytic continuation of complex representation 
was given by Millar (See \cite[p.508 (7.2)]{Mi1}): 
\begin{proposition}
The integral $K e_{\lambda}(q(s))=\frac{1}{\pi} \int_{0}^{2\pi} e_{\lambda}(q(s)) 
\frac{\partial}{\partial \nu}\log r(s, t)\; ds
=\frac{1}{\pi}\int_{0}^{2\pi} e_{\lambda}(q(s)) 
\frac{1}{r}\frac{\partial r}{\partial \nu}(s, t)\; ds$ is real analytic on the parameter interval 
$S^1$ parametrizing $\partial \Omega$ and holomoriphically extended to an annulus $A(\epsilon)$ 
by the formula 
$$
Ke_{\lambda}(q^{\mathbb{C}}(t))=
\frac{1}{2\pi i} \int_{0}^{2\pi} e_{\lambda}(q(s)) \left(\frac{q'(s)}{q(s)-q^{\mathbb{C}}(t)}
-\frac{\bar{q}'(s)}{\bar{q}(s)-q^{\mathbb{C} \hspace{0.3mm} *}(t)}\right)\; ds.
$$
\end{proposition}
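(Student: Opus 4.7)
The plan is to split the claim into two parts: (a) verify that the complex-kernel formula on the right-hand side agrees with the original double-layer integral when $t \in S^1$, and (b) show that this right-hand side defines a holomorphic function of $t$ on the annulus $A(\epsilon)$. Together these yield both the real analyticity of $Ke_\lambda$ on $S^1$ and the claimed holomorphic extension. For (a), I would use the identity for $\frac{\partial}{\partial n}\log r$ displayed immediately before the proposition; since the parametrization is by arc length, $\frac{\partial}{\partial n}$ coincides with $\frac{\partial}{\partial \nu}$, so a direct substitution combined with the numerical identity $\frac{1}{\pi}\cdot\frac{-i}{2} = \frac{1}{2\pi i}$ converts the defining integral into the complex formula whenever $t$ lies on $S^1$.

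For (b), both $q^{\mathbb{C}}$ and $q^{\mathbb{C}\,*}$ are holomorphic on $A(\epsilon)$ by construction, so the integrand is holomorphic in $t$ for each fixed $s$ provided the two denominators do not vanish. For $\epsilon$ small enough, $q^{\mathbb{C}}$ is an injective analytic extension of an embedding, and the relation $q^{\mathbb{C}\,*}(t) = \overline{q^{\mathbb{C}}(\bar t)}$ (obtained by analytic continuation from its validity on $S^1$) implies that neither $q(s) - q^{\mathbb{C}}(t)$ nor $\bar q(s) - q^{\mathbb{C}\,*}(t)$ vanishes for $s \in [0, 2\pi]$ and $t \in A(\epsilon) \setminus S^1$. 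A standard Morera plus Fubini argument then promotes pointwise holomorphy of the integrand to holomorphy of the integral on $A(\epsilon) \setminus S^1$.

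The main obstacle is the real axis $t \in S^1$, where both denominators vanish simultaneously at $s = t$ and each bracketed term appears to develop a simple pole. I would resolve this with a short Taylor expansion: from $q^{\mathbb{C}}(t) - q(s) = q'(s)(t-s) + O((t-s)^2)$ and the analogous expansion for $q^{\mathbb{C}\,*}(t) - \bar q(s)$, one sees that each of the two fractions has the same principal part $-1/(t-s)$, so their difference extends continuously across $s = t$. Combined with Lemma 2.13, which already guarantees $C^{k-2}$ regularity of the real kernel $\frac{\partial}{\partial \nu}\log r$, this shows that the full integrand is uniformly bounded on compact subsets of $[0, 2\pi] \times A(\epsilon)$. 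The removable singularity theorem then propagates the holomorphy across $S^1$, and the identification on the real axis from step (a) pins down the extension as $K e_\lambda$, yielding in particular the real analyticity of $K e_\lambda$ on $S^1$.
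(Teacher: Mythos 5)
Your proposal is essentially the same as the paper's: both proofs rest on (i) the complex identity for $\tfrac{\partial}{\partial n}\log r$ (so the complex formula agrees with the real double-layer integral on $S^1$), and (ii) the cancellation of the $\tfrac{1}{s-t}$ principal parts in
$\tfrac{q'(s)}{q(s)-q^{\mathbb{C}}(t)}-\tfrac{\bar q'(s)}{\bar q(s)-q^{\mathbb{C}\,*}(t)}$,
which is the whole reason the apparent logarithmic-kernel pole is harmless.

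One step is phrased in a way that, taken literally, is a gap. You say the integral $Ke_\lambda(q^{\mathbb{C}}(\cdot))$ is holomorphic on $A(\epsilon)\setminus S^1$ and bounded, and then that ``the removable singularity theorem propagates the holomorphy across $S^1$.'' A bounded holomorphic function on an annulus minus a smooth arc need \emph{not} extend across that arc (consider the locally constant function that equals $1$ above the real line and $0$ below it); boundedness alone is only sufficient for isolated points, not for a curve of positive length. The right place to invoke removability is at the level of the \emph{integrand}: for each fixed $s$, the bracketed kernel has one potential singularity at $t=s$, which is isolated, and your residue computation shows the two simple poles cancel there; Riemann's theorem then makes the integrand holomorphic on all of $A(\epsilon)$ for every $s$, with joint continuity and uniform bounds on compacts supplied by Lemma 2.13. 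Once that is done, Morera/Fubini applies directly on all of $A(\epsilon)$ and no extension step across $S^1$ is needed. This is exactly what the paper does when it observes that $s\mapsto\left[\tfrac{q'(s)}{q(s)-q^{\mathbb{C}}(t)}-\tfrac{\bar q'(s)}{\bar q(s)-q^{\mathbb{C}\,*}(t)}\right]$ is a continuous map from $[0,2\pi]$ into the space of holomorphic functions of $t$ on $A(\epsilon)$. With that relocation of the removability argument, your proof is correct and matches the paper's.
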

\begin{proof}
We first remark that 
$\frac{\partial}{\partial \nu}=|q'(s)|^{-1}\frac{\partial}{\partial n}$, 
so the integral representation is invariant under reparametrization. 
Any derivative of $\log r^2$ is unambiguously defined and we already have 
$$
\frac{1}{r} \frac{\partial r}{\partial n}=\frac{\partial \log r}{\partial n}
=\frac{1}{2i}\Big[\frac{q'(s)}{q(s)-q^{\mathbb{C}}(t)}
-\frac{\bar{q}'(s)}{\bar{q}(s)-q^{\mathbb{C} \hspace{0.3mm} *}(t) } \Big]. 
$$
In the real domain $q^{\mathbb{C} \hspace{0.3mm}*}(t)=\bar{q}^{\mathbb{C} }(t)$, so 
$$\frac{1}{r}\frac{\partial r}{\partial n}=\mbox{Im} \frac{q'(s)}{q(s)-q(t)}. $$
Here  $\mbox{Im}\; z$ denotes the imaginary part of $z$. 
We recall that in terms of the real parametrization, 
$\frac{1}{r} \frac{\partial r}{\partial \nu}$ is real and continuous (See Lemma 2.13). 
\par
In complex notation, the same statement follows from the fact that 
$$
\lim_{t\rightarrow s}\frac{q(s)-q^{\mathbb{C}}(t)}{s-t}=q'(s) \quad 
\Rightarrow \quad 
\frac{q'(s)}{q(s)-q^{\mathbb{C}}(t)}=\frac{1}{s-t}+O(1),\ (s\rightarrow t), 
$$
where $\frac{1}{s-t}$ is real when $s, t \in {\mathbf{R}}$. 
Hence 
$\mbox{Im} \frac{q'(s)}{q(s)-q^{\mathbb{C}}(t)}$ is continuous for $s, t\in [0, 2\pi]$ and 
since $q(s),\; q(t)$ are real analytic, the map 
$$ 
s\; \rightarrow\; \Big[\frac{q'(s)}{q(s)-q^{\mathbb{C}}(t)}
-\frac{\bar{q}'(s)}{\bar{q}(s)-q^{\mathbb{C} \hspace{0.3mm} *}(t) } \Big]
$$ 
is a continuous map from  $s\in [0, 2\pi]$ to the space of holomorphic functions of $t$. 
So the integral admits an holomorphic extention. 
\end{proof}
\begin{remark}
We notice that the continuous eigenfunction satisfies 
$$
e_{\lambda}(q(s))=\frac{1}{\lambda} Ke_{\lambda}(q(s)) \quad \mbox{for}\; s\in  {\mathbf{S}}^1.  
$$
From Proposition 3.7, if $\lambda\not=0$, the continuous eigenfunction is also analytic.
\end{remark}
\subsubsection{Growth of zeroes and Growth of $e_{\lambda}^{\mathbb{C}}(q^{\mathbb{C}}(t))$}
The main purpose of this subsection is to give an upper bound for the number of complex 
zeroes of $e_{\lambda}^{\mathbb{C}}$ in $\partial \Omega_{\mathbb{C}}(\epsilon)$ 
in terms of the growth of $|e_{\lambda}^{\mathbb{C}}(q^{\mathbb{C}}(t))|$. 
For the eigenvalue $\lambda$ and for a region $D\subset \partial \Omega_{\mathbb{C}}(\epsilon)$ 
we denote by 
$$
n(\lambda, D)=\sharp \{ q^{\mathbb{C}}(t)\in D\; :\; e_{\lambda}^{\mathbb{C}}(q^{\mathbb{C}}(t))=0 \}. 
$$
To the reader's convenience, we recall that the classical distribution theory of holomorphic 
functions is concerned with the relation between the growth of the number of zeroes of 
a holomorphic function $f$ and the growth of $\mbox{max}_{|z|=r} \log|f(z)|$ on discs of 
increasing radius. The following estimate, suggested by Lemma 6.1 of Donnelly-Fefferman 
(See \cite{DF}), gives an upper bound on the number of zeroes in terms of the growth of the family: 
\begin{proposition}
Normalize $e_{\lambda}$ so that $\Vert e_{\lambda}\Vert_{L^2(\partial \Omega)}=2\pi$. Then 
there exists a constant $C(\epsilon)>0$ such that for any $\epsilon>0$, 
\def\Max{\ensuremath\mathop\mathrm{max}}
$$
n(\lambda, \partial \Omega_{\mathbb{C}}(\epsilon/2)) \leqq C(\epsilon) 
\max_{q^{\mathbb{C}}(t)\in  \partial \Omega_{\mathbb{C}}(\epsilon)} 
\Big|\log | e_{\lambda}^{\mathbb{C}}(q^{\mathbb{C}}(t)) |\Big|.
$$
\end{proposition}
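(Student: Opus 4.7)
The plan is to transfer the problem via the parametrization $Q^{\mathbb{C}}$ and apply a Jensen-type estimate on the annulus $A(\epsilon)$. Set $F(\tau) \equiv e_{\lambda}^{\mathbb{C}}(Q^{\mathbb{C}}(\tau))$; by Proposition 3.7 combined with Remark 3.8 this function is holomorphic on $A(\epsilon)$, and zeros of $e_{\lambda}^{\mathbb{C}}$ on $\partial\Omega_{\mathbb{C}}(\epsilon/2)$ correspond bijectively, via the biholomorphism $Q^{\mathbb{C}}$ (taking $\epsilon$ small enough that $Q^{\mathbb{C}}$ is univalent), to zeros of $F$ on $\overline{A(\epsilon/2)}$. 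Setting $M \equiv \max_{A(\epsilon)}|F|$, the right-hand side of the claimed inequality is comparable to $\log M$ up to a constant depending only on $\epsilon$, so it suffices to bound the number of zeros by $C(\epsilon)\log M$.

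The normalization yields a pointwise lower bound for $|F|$ on $S^{1}$. Using arc-length parametrization of total length $2\pi$, the hypothesis $\|e_{\lambda}\|_{L^{2}(\partial\Omega)} = 2\pi$ translates to $\int_{0}^{2\pi} |F(e^{it})|^{2}\, dt = 4\pi^{2}$, so the mean of $|F|^{2}$ on $S^{1}$ equals $2\pi$. Hence there exists a ``good point'' $\tau_{*} \in S^{1}$ with $|F(\tau_{*})| \geq \sqrt{2\pi}$, providing a point at which $\log|F|$ is non-negative.

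Now cover $\overline{A(\epsilon/2)}$ by finitely many closed discs $D_{j} \equiv D(\zeta_{j}, r)$ for $j = 1, \ldots, N$, with radius $r = r(\epsilon) > 0$ chosen so that $D(\zeta_{j}, 4r) \subset A(\epsilon)$ for every $j$; both $N = N(\epsilon)$ and $r$ depend only on $\epsilon$. For each $j$ fix a chain of centers $\tau_{*} = \zeta_{j_{0}}, \zeta_{j_{1}}, \ldots, \zeta_{j_{k(j)}} = \zeta_{j}$ with $k(j) \leq N$ and consecutive distances less than $r/4$. Iterate the following propagation step along the chain: given $|F(\zeta_{j_{i}})| \geq m_{i}$ together with $|F| \leq M$ on $D(\zeta_{j_{i}}, 2r)$, apply Jensen's formula on $D(\zeta_{j_{i}}, 2r)$ after factoring out the zeros of $F$ in that disc as a Blaschke product, obtaining a point $\zeta_{j_{i+1}}'$ within $r/8$ of $\zeta_{j_{i+1}}$ satisfying $\log|F(\zeta_{j_{i+1}}')| \geq \log m_{i} - C_{0}\log M$. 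After at most $N$ iterations one reaches a point $\eta_{j}$ in a controlled neighborhood of $\zeta_{j}$ with $\log|F(\eta_{j})| \geq -C_{1}(\epsilon)\log M$. A final application of Jensen's formula on a disc $D(\eta_{j}, R) \subset A(\epsilon)$ of radius $R = R(\epsilon)$ large enough that $D(\eta_{j}, R/2) \supset D_{j}$ gives
\[
(\log 2)\cdot \sharp\{\zeta \in D_{j} : F(\zeta) = 0\} \;\leq\; \log M - \log|F(\eta_{j})| \;\leq\; \bigl(1 + C_{1}(\epsilon)\bigr)\log M.
\]
Summing over $j = 1, \ldots, N$ yields the claimed bound with $C(\epsilon) = N(\epsilon)(1+C_{1}(\epsilon))/\log 2$. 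The main obstacle is the iterated propagation of the lower bound from $\tau_{*}$ to each $\zeta_{j}$ with only an additive loss of $O(\log M)$ per step, which is the technical content of Donnelly--Fefferman's Lemma 6.1 and is achieved precisely by dividing out the Blaschke product of zeros before invoking the sub-mean-value property of the subharmonic function $\log|F|$.
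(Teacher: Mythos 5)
Your argument is essentially correct, but it proves the proposition by a genuinely different route than the paper. The paper works globally on the complexified annulus $\partial\Omega_{\mathbb C}(\epsilon)$: it writes $\log|f_\lambda|$ (with $f_\lambda$ the sup-normalized extension) via the Riesz/Poincar\'e--Lelong decomposition as $\sum_k G_\epsilon(\cdot,a_k)+F_\lambda$ with $G_\epsilon$ the Dirichlet Green's function, kills $F_\lambda$ by the maximum principle, and then uses that $G_\epsilon\leqq \nu(\epsilon)<0$ uniformly on the compact sub-annulus to read off the zero count in one stroke; the $L^2$ normalization enters only to guarantee a point with $|e_\lambda|>1$, exactly as your ``good point'' $\tau_*$ with $|F(\tau_*)|\geqq\sqrt{2\pi}$ does. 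You instead run the local propagation-of-smallness scheme (covering by discs, a chain from $\tau_*$, Jensen plus Blaschke factorization, then a final Jensen count per disc), which is closer in spirit to Donnelly--Fefferman's Lemma 6.1 itself; it is more elementary in that it needs no Green's function of the complexified annulus, at the price of heavier bookkeeping. Two points deserve care. First, your per-step estimate $\log|F(\zeta'_{j_{i+1}})|\geqq\log m_i-C_0\log M$ is not what Jensen/Blaschke actually yields once $m_i<1$: dividing out the Blaschke product and applying Harnack to $\log(M/|g|)$ (and Cartan-type avoidance of small discs around the zeros, or a choice of a good circle, to bound the Blaschke factor from below at a usable point) gives a loss proportional to $\log(M/m_i)$, i.e.\ $\log m_{i+1}\geqq\log m_i-C_0(\log M-\log m_i)$; the deficiency $\log M-\log m_i$ then grows geometrically along the chain, but since the chain length is bounded by $N(\epsilon)$ this still yields $\log|F(\eta_j)|\geqq -C_1(\epsilon)\log M$, so your conclusion survives with adjusted constants. (A cleaner variant replaces the Blaschke step by Hadamard's three-circles theorem, propagating sup-norm lower bounds on shrinking discs and avoiding the exceptional-set issue altogether.) Second, of the asserted ``comparability'' of the right-hand side with $\log M$ only the inequality $\log M\leqq\max|\log|e_\lambda^{\mathbb C}||$ is needed, and it holds because $M\geqq\sqrt{2\pi}>1$; the reverse inequality is neither needed nor generally true, so you should phrase that reduction as a one-sided bound. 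With these repairs the proof is complete and independent of the paper's Green's-function argument.
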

\begin{proof} 
Let $G_{\epsilon}$ denote the Dirichlet Green's function of 
$\frac{2}{\pi}\frac{\partial^2}{\partial z \partial z^*}$ in the `annulus' 
$\partial \Omega_{\mathbb{C}}(\epsilon)$. 
Also, let $\{a_k\}_{k=1}^{n(\lambda, \partial \Omega_{\mathbb{C}}(\epsilon/2))}$ 
denote the zeroes of $e_{\lambda}^{\mathbb{C}}$ 
in the sub-annulus $\partial \Omega_{\mathbb{C}}(\epsilon/2)$. 
Let $f_{\lambda}=\frac{e_{\lambda}^{\mathbb{C}}}{ \Vert e_{\lambda}^{\mathbb{C}}\Vert_{\partial \Omega_{\mathbb{C}}(\epsilon)} } $
where $\Vert u \Vert_{\partial \Omega_{\mathbb{C}}(\epsilon)} 
=\mbox{max}_{\zeta\in \partial \Omega_{\mathbb{C}}(\epsilon)} |u(\zeta)|. $ 
Then $\log|f_{\lambda}(q^{\mathbb{C}}(t))|$ can be separated into two terms: 
\begin{align*}
\log|f_{\lambda}(q^{\mathbb{C}}(t))| &=\int_{\partial \Omega_{\mathbb{C}}(\epsilon/2)} G_{\epsilon}(q^{\mathbb{C}}(t) , w)
\frac{i}{\pi} \partial \bar{\partial} \log |e_{\lambda}^{\mathbb{C}}(w)| +F_{\lambda}(q^{\mathbb{C}}(t)) \\
&=\sum_{a_k\in \partial \Omega_{\mathbb{C}}(\epsilon/2)\; :\; e_{\lambda}^{\mathbb{C}}(a_k)=0} G_{\epsilon}(q^{\mathbb{C}}(t), a_k)
+F_{\lambda}(q^{\mathbb{C}}(t)), 
\end{align*}
since 
$\frac{i}{\pi} \partial \bar{\partial} \log |e_{\lambda}^{\mathbb{C}}(w)| 
= \sum_{a_k\in \partial \Omega_{\mathbb{C}}(\epsilon/2)\; :\; e_{\lambda}^{\mathbb{C}}(a_k)=0} 
\delta_{a_k}$ 
which is called Poincar\'e-Lelong formula of holomorphic functions (See e.g. \cite[p.9 (3.6)]{De}). 
Moreover the function $F_{\lambda}$ is subharmonic on $\partial \Omega_{\mathbb{C}}(\epsilon)$ 
in the sense of distribution:   
$$
\frac{i}{\pi}\partial \bar{\partial} F_{\lambda} 
= \frac{i}{\pi}\partial \bar{\partial} \log|f_{\lambda}(q^{\mathbb{C}}(t))|
-\sum_{a_k\in \partial \Omega_{\mathbb{C}}(\epsilon/2)\; :\; e_{\lambda}^{\mathbb{C}}(a_k)=0} 
\frac{i}{\pi} \partial \bar{\partial} G_{\epsilon}(q^{\mathbb{C}}(t), a_k)
=\sum_{a_k\in \partial \Omega_{\mathbb{C}}(\epsilon) \backslash \partial \Omega_{\mathbb{C}}(\epsilon/2) \; :\; e_{\lambda}^{\mathbb{C}}(a_k)=0} \delta_{a_k}>0.  
$$ 
So, by the maximum principle for subharmonic functions, we obtain 
$$
\max_{\partial \Omega_{\mathbb{C}}(\epsilon)} F_{\lambda}(q^{\mathbb{C}}(t)) 
\leqq
\max_{\partial(\partial \Omega_{\mathbb{C}}(\epsilon))} F_{\lambda}(q^{\mathbb{C}}(t)) 
=\max_{\partial(\partial \Omega_{\mathbb{C}}(\epsilon))} \log |f_{\lambda}(q^{\mathbb{C}}(t))|=0.
$$
It follows that 
$$
\log |f_{\lambda}(q^{\mathbb{C}}(t))| \leqq 
\sum_{a_k\in \partial \Omega_{\mathbb{C}}(\epsilon/2) \; :\; e_{\lambda}^{\mathbb{C}}(a_k)=0} 
G_{\epsilon}(q^{\mathbb{C}}(t), a_k), 
$$
hence that 
$$
\max_{q^{\mathbb{C}}(t)\in \partial \Omega_{\mathbb{C}}(\epsilon)}
\log |f_{\lambda}(q^{\mathbb{C}}(t))| \leqq 
\left( \max_{z, w \in \partial \Omega_{\mathbb{C}}(\epsilon/2)} G_{\epsilon}(z, w) \right) n(\lambda, \partial \Omega_{\mathbb{C}}(\epsilon/2)).  
$$
Now $G_{\epsilon}(z, w)\leqq \mbox{max}_{w \in \partial (\partial \Omega_{\mathbb{C}}(\epsilon))}G_{\epsilon}(z, w)=0$ 
and so $G_{\epsilon}(z, w)<0$ for $z, w \in \partial \Omega_{\mathbb{C}}(\epsilon/2)$. 
It follows that 
there exists a constant $\nu(\epsilon)<0$ 
so that $\mbox{max}_{z, w\in \partial \Omega_{\mathbb{C}}(\epsilon/2)}G_{\epsilon}(z, w) 
\leqq \nu(\epsilon)$. 
Hence, 
$$
\max_{q^{\mathbb{C}}(t)\in \partial \Omega_{\mathbb{C}}(\epsilon/2)}
\log |f_{\lambda}(q^{\mathbb{C}}(t))| \leqq 
\nu(\epsilon) n(\lambda, \partial \Omega_{\mathbb{C}}(\epsilon/2)).  
$$
Since both sides are negative, we obtain 
\begin{align*}
n(\lambda, \partial \Omega_{\mathbb{C}}(\epsilon/2)) 
&\leqq \frac{1}{|\nu(\epsilon)|} \Big|\max_{q^{\mathbb{C}}(t)\in \partial \Omega_{\mathbb{C}}(\epsilon/2)} 
  \log |f_{\lambda}(q^{\mathbb{C}}(t))| \Big| \\
&\leqq \frac{1}{|\nu(\epsilon)|} \Big( \max_{q^{\mathbb{C}}(t)\in \partial \Omega_{\mathbb{C}}(\epsilon)} 
\log |e_{\lambda}^{\mathbb{C}}(q^{\mathbb{C}}(t)) |
-\max_{q^{\mathbb{C}}(t)\in \partial \Omega_{\mathbb{C}}(\epsilon/2)} 
\log |e_{\lambda}^{\mathbb{C}}(q^{\mathbb{C}}(t)) |  \Big)  \\ 
&\leqq \frac{1}{|\nu(\epsilon)|} \max_{q^{\mathbb{C}}(t)\in \partial \Omega_{\mathbb{C}}(\epsilon)} 
\log |e_{\lambda}^{\mathbb{C}}(q^{\mathbb{C}}(t)) |, \\
\end{align*}
where in the last inequality we use that $\mbox{max}_{q^{\mathbb{C}}(t)\in \partial \Omega_{\mathbb{C}}(\epsilon/2)} 
\log |e_{\lambda}^{\mathbb{C}}(q^{\mathbb{C}}(t))|\geqq 0$, which holds since 
$|e_{\lambda}^{\mathbb{C}}|\geqq 1$ at some point in $\partial \Omega_{\mathbb{C}}(\epsilon/2)$. 
Indeed, by our normalization, $\Vert e_{\lambda}\Vert_{L^2(\partial \Omega)}=2\pi$, and 
so there must already exist a points on $\partial \Omega$ with $|e_{\lambda}|>1. $ 
Putting $C(\epsilon)=\frac{1}{|\nu(\epsilon)|}$ we have the desired result. 
\end{proof}
We obtain the main theorem:   
\begin{theorem}
Let $\Omega \subset {\mathbf R}^2$ be a real analytic domain and $|\lambda|\not=0$. 
For real analytic eigenfunctions $e_\lambda(x)$ we have  
$$ \sharp N(e_{\lambda}(x))< C |\log |\lambda||.$$ 
\end{theorem}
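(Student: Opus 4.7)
The plan is to combine Proposition 3.9 with a uniform estimate $|e_\lambda^{\mathbb{C}}| \leq C/|\lambda|$ on the complexified strip. Normalize $\|e_\lambda\|_{L^2(\partial\Omega)}=2\pi$ (this is harmless, since both sides of the claimed inequality are unchanged by rescaling the eigenfunction). Each real zero of $e_\lambda$ is also a complex zero of $e_\lambda^{\mathbb{C}}$, so $\sharp N(e_\lambda(x))\leq n(\lambda,\partial\Omega_{\mathbb{C}}(\epsilon/2))$, and by Proposition 3.9 it suffices to show
$$
\max_{q^{\mathbb{C}}(t)\in \partial\Omega_{\mathbb{C}}(\epsilon)} \bigl|\log|e_\lambda^{\mathbb{C}}(q^{\mathbb{C}}(t))|\bigr| \leq C_0+|\log|\lambda||.
$$

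Next I use the eigenvalue equation. On $\partial\Omega$ one has $\lambda e_\lambda=Ke_\lambda$; by uniqueness of analytic continuation this identity extends holomorphically, giving $\lambda\, e_\lambda^{\mathbb{C}}(q^{\mathbb{C}}(t))=(Ke_\lambda)^{\mathbb{C}}(q^{\mathbb{C}}(t))$ throughout $A(\epsilon)$. So it is enough to bound $|(Ke_\lambda)^{\mathbb{C}}(q^{\mathbb{C}}(t))|$ uniformly on $A(\epsilon)$ in terms of $\|e_\lambda\|_{L^2}$.

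For this uniform bound I apply the integral representation of Proposition 3.7 and estimate the complex kernel
$$
\mathcal{K}(s,t)\;=\;\frac{q'(s)}{q(s)-q^{\mathbb{C}}(t)}-\frac{\bar q'(s)}{\bar q(s)-q^{\mathbb{C}\,*}(t)}
$$
uniformly on $(s,t)\in[0,2\pi]\times A(\epsilon)$ for $\epsilon$ small enough. Writing $t=\sigma+i\tau$ and expanding the analytic continuation to first order,
$$
q(s)-q^{\mathbb{C}}(\sigma+i\tau)\;=\;q'(\sigma)\bigl[(s-\sigma)-i\tau\bigr]+O\bigl((s-\sigma)^2+\tau^2\bigr),
$$
and analogously $\bar q(s)-q^{\mathbb{C}\,*}(\sigma+i\tau)=\bar q'(\sigma)[(s-\sigma)-i\tau]+O(\cdots)$, so both terms in $\mathcal{K}$ have the \emph{same} leading singular part $(s-\sigma-i\tau)^{-1}$ and their difference extends continuously, hence is bounded, on the closed product space. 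This is the complex-analytic analogue of the classical cancellation recorded in Lemma 2.13 (continuity of the normal derivative of $\log r$ on a $C^2$ curve). Cauchy–Schwarz then produces a constant $M=M(\Omega,\epsilon)$ with $|(Ke_\lambda)^{\mathbb{C}}(q^{\mathbb{C}}(t))|\leq M\|e_\lambda\|_{L^2}=2\pi M$ uniformly on $A(\epsilon)$.

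Assembling the pieces, $|e_\lambda^{\mathbb{C}}(q^{\mathbb{C}}(t))|\leq 2\pi M/|\lambda|$ on $A(\epsilon)$, so $\log|e_\lambda^{\mathbb{C}}|\leq \log(2\pi M)-\log|\lambda|\leq C|\log|\lambda||$, since $|\lambda|<1$; substituting into Proposition 3.9 yields $\sharp N(e_\lambda(x))\leq C|\log|\lambda||$, as required. The principal obstacle is the kernel estimate: one must verify that the Cauchy-type singularities of the two summands in $\mathcal{K}(s,t)$ match \emph{across the complex strip} and not merely on its real slice, which is exactly what the term-by-term Taylor expansion of $q^{\mathbb{C}}$ and its Schwarz reflection $q^{\mathbb{C}\,*}$ above accomplishes. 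Once this cancellation is in hand, the rest of the argument is a direct appeal to Proposition 3.9 and the eigenvalue identity.
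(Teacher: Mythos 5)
Your proposal is correct and follows essentially the same route as the paper's own proof: analytically continue the eigenvalue equation via Proposition 3.7, bound $|e_\lambda^{\mathbb{C}}|$ on the complexified annulus by $C/|\lambda|$ using boundedness of the complex kernel and Cauchy--Schwarz, and then apply Proposition 3.9. The only difference is that you re-derive the kernel boundedness by an explicit Taylor expansion of $q^{\mathbb{C}}$ and $q^{\mathbb{C}*}$ near the real slice (making the cancellation of the two $(s-\sigma-i\tau)^{-1}$ singularities transparent in the strip), whereas the paper simply cites this continuity from its proof of Proposition 3.7.
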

\begin{proof}
For real $t\in S^1=[0, 2\pi]/\sim$,     
$$
e_{\lambda}(q(t))=\frac{1}{\lambda} K e_{\lambda}
=\frac{1}{\lambda}\int_{0}^{2\pi} e_{\lambda}(q(s)) 
\frac{1}{r}\frac{\partial r}{\partial \nu}(s, t)\; ds. 
$$
The holomorphic extention of $e_\lambda(q(s)) \in C^{\omega}(S^1)$ to $C^{\omega}(A(\epsilon))$ is 
unique and hence from Proposition 3.7,  
$$ e_{\lambda}^{\mathbb{C}}(q^{\mathbb{C}}(t)) 
=\frac{1}{2\pi i \lambda} 
\int_{0}^{2\pi} e_{\lambda}(q(s)) \left(\frac{q'(s)}{q(s)-q^{\mathbb{C}}(t)
}-\frac{\bar{q}'(s)}{\bar{q}(s)-q^{\mathbb{C} \hspace{0.3mm} *}(t)}\right)\; ds. 
$$
Remarking that the function $\left( \cdots \right)$ is continuous 
and bounded on $A(\epsilon)$ from the proof of Proposition 3.7. 
So using Cauchy-Schwarz inequality, there exists $C_{A(\epsilon)}>0$ such that  
\begin{align*}
|e_{\lambda}^{\mathbb{C}}(q^{\mathbb{C}}(t))| 
& \leqq \Big| \frac{1}{2\pi i \lambda} \Big| \cdot 
\Vert e_{\lambda}(q(s))\Vert_{L^2 (\partial \Omega)} 
\int_{0}^{2\pi} \Big| \frac{q'(s)}{q(s)-q^{\mathbb{C}}(t)
}-\frac{\bar{q}'(s)}{\bar{q}(s)-q^{\mathbb{C} \hspace{0.3mm} *}(t)} \Big|^2 \; ds \\ 
& \leqq \Big|\frac{1}{\lambda} \Big| \cdot C_{A(\epsilon)} \cdot  
\Vert e_{\lambda}(q(s))\Vert_{L^2(\partial \Omega)}.  
\end{align*}
Letting $\Vert e_{\lambda}(q(s))\Vert_{L^2(\partial \Omega)}=2\pi$ and by Proposition 3.9, 
\begin{align*}
n(\lambda, \partial \Omega_{\mathbb{C}}(\epsilon/2)) & \leqq C(\epsilon) 
\max_{q^{\mathbb{C}}(t)\in \partial \Omega_{\mathbb{C}}(\epsilon)} 
\Big|\log | e_{\lambda}^{\mathbb{C}}(q^{\mathbb{C}}(t)) |\Big| \\
& \leqq C(\epsilon)\Big| \log \Big[ \Big|\frac{1}{\lambda} \Big| \cdot C_{A(\epsilon)} \cdot  
\Vert e_{\lambda}(q(s))\Vert_{L^2(\partial \Omega)}\Big]\Big| \\ 
& \leqq \tilde{C}(\epsilon) |\log |\lambda|| 
\end{align*}
as desired. 
\end{proof}
\section{Double layer potentials in $\mathbf{R}^3$}
\par\hspace{5mm} 
Plemelj \cite{Pl} derived a fundamental result on the double layer potential 
in $\mathbf{R}^3$ which states that the eigenvalues of $K$ satisfy the following inequality 
$$
-1\leq \lambda_j<1
$$
For the case of a sphere, however, it is known that the eigenvalue of $K$ 
are negative, and by a straightforward calculation it can be shown 
that the eigenvalues are given by 
$$
\lambda_j=-\frac{1}{2j+1}, \quad (j=0, 1, 2, \cdots)
$$
with multiplicity $2j+1$. So $\sigma_p(K)$ for $n=3$ 
is very different from it for $n=2$ (See example 2.1, 
example 2.2 and Theorem 2.7).
Furthermore, Ahner and Arenstrof \cite{AA} have shown that when $\partial \Omega$ is 
a prolate spheroid, the corresponding eigenvalues are also negative. 
Consequently, for this geometry, the spectrum of $K$ also lies in the 
closed interval $[-1, 0]$.  
\par
Apart from these calculations, for the case of a special oblate spheroid,  
Ahner \cite[p.333]{Ah2} finds the positive eigenvalue 
$\lambda= 0.0598615\cdots <1$.  
This is an example of positive eigenvalues. Unfortunately, 
this supremum of eigenvalues becomes a formidable task for general 
region. 
\par 
Neverthless, for $\bar{\lambda}= \sup\{\lambda_j\ |\ \lambda_j \in \sigma_p(K) \}, $
we know the supremum of the boundary variation 
(\cite[Lemma 3.2, Theorem 3.4]{ADR})
$$
\sup\limits_{\partial \Omega} \overline{\lambda}=1
$$
where the supremum is taken over all $C^{\infty}$ domain $\Omega$. 
Letting $\underline{\lambda}= \inf\{\lambda_j\ |\ \lambda_j \in \sigma_p(K)\backslash \{-1\} \}, $
we also know ( \cite[Lemma 3.2]{ADR}, \cite[Theorem 5]{KPS})
$$
\inf\limits_{\partial \Omega} \underline{\lambda}=-1.
$$
Here we introduce a result about $\underline{\lambda}$ : 
Steinbach and Wendland prove that 
$$ 
(1-\sqrt{1-c_0})\Vert w \Vert_{S^{-1}} 
\leqq \Vert (I\pm K) w \Vert_{S^{-1}} 
\leqq (1+\sqrt{1-c_0})\Vert w \Vert_{S^{-1}}
$$ 
where $c_0=\inf\limits_{w \in H^{1/2}_*}\frac{\langle Kw, w\rangle}{\langle S^{-1} w, w\rangle}$ and 
$\Vert w \Vert_{S^{-1}}=\sqrt{\langle S^{-1} w , w \rangle}_{L^2{(\partial \Omega)}}$ 
for $w\in H^{1/2}(\partial \Omega)$. They show $c_0\leqq 1$. 
(These constants are slightly different from those in the original papers. 
For more information see \cite[Theorem 3.2]{SW}.)  
Especially for the negative eigenvalue $\underline{\lambda}$
$$
\underline{\lambda} \geqq -\sqrt{1-c_0}. 
$$
Thus the shape dependent constant $c_0$ controls the eigenvalue $\underline{\lambda}$. 
Note that Pechstein recently gives the lower bound of $c_0$ by 
using the isoperimetric constant $\gamma(\Omega)$ 
and Sobolev extention constants (See \cite[Corollary 6.14]{Pe}, \cite{KRW}). 
In the case of $\partial\Omega=S^2$, 
$c_0=\frac{8}{9}$ and $\underline{\lambda}=-\sqrt{1-c_0}=-\frac{1}{3}$. 
\subsection{Asymptotic properties of $\sigma(K)$ for $n=3$}
\par 
For the case of $n=3$, 
D. Khavinson, M. Putinar and H. S. Shapiro briefly mentioned only a result: 
$K$ is in the Schatten class ${\mathcal{S}}_p(L^2(\partial \Omega))$, 
$p>2$ (See \cite[p.150]{KPS}). We shall explain it in more detail for smooth $\partial\Omega$. 
Following \cite[p.303]{Ke}, the nature of the diagonal singularity of the kernel 
$\nu_{y} E(x, y)$ shows that  
$$
E_{2}(x, y)=\int_{\Omega_z} \nu_{z} E(z, x)\cdot \nu_{z} E(z, y) dS_z=A(x, y)+B(x, y) \log(|x-y|) 
$$
where $A(x, y), B(x, y)\in C^{\infty}(\partial \Omega\times \partial \Omega)$. 
Since $E_2(x, y) \in H_{x, y}^{\mu_1, \mu_2} $ with $\mu_1+\mu_2<1$, 
applying Theorem 2.14 to $E_2$ 
$$
K^{*} K \ \in {\mathcal{S}}_r(L^2(\partial \Omega))\quad \mbox{for}\ r>\frac{4}{2+2}=1. 
$$ 
This means that $K$ is in the Schatten class 
${\mathcal{S}}_p(L^2(\partial \Omega))$, $p>2$. 
\par 
We note that the regularity of $A(x, y)$ and $B(x, y)$ is essential to the above result.   
Immediately a decay rate of $\sigma_p(K)$ is obtained:
\begin{remark}
Let $n=3$ and $\Omega$ be a smooth region. For $\alpha>-\frac{1}{2}$, 
$$
\lambda_{j}=o({j^{\alpha}}) \quad \text{as}\; j\rightarrow \infty. 
$$
\end{remark}
In the case of a sphere, this is the best possibile estimate.  
\subsection{Isoperimetric properties of $K$}
We want to characterize the isoperimetric properties by $\sigma_p{(K)}$. 
For the case of $n=3$, however, the explicit formula have not been obtained yet. 
In this subsection, some expected properties and conjectures are introduced.   
Seeing the case of $n=2$, $-\underline{\lambda}$ 
and Schatten norm are expected to minimize by $\partial\Omega=S^2.$ So we expect the 
following conjectures:
\setcounter{conjecture}{0} 
\begin{conjecture} Let $n=3$ and $\underline{\lambda} \equiv 
{\rm min}\; \sigma_p(K)\backslash \{-1\}$.  
We have  
$$
\sup\limits_{\partial \Omega} \underline{\lambda}=-\frac{1}{3},  
$$
where the supremum is taken over all $C^{\infty}$ simply 
connected closed surfaces. The supremum is achieved if and only if $\partial \Omega=S^2$. 
\end{conjecture}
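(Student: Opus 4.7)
The plan is to combine the Min-Max principle (Proposition 3.2) with coordinate functions as trial functions, and then argue rigidity for the equality case. First, the sphere case is direct: for $\partial\Omega=S^2$ the eigenvalues of $K$ are $\lambda_j=-\frac{1}{2j+1}$ $(j=0,1,2,\ldots)$, so $\underline{\lambda}=\lambda_1=-\frac{1}{3}$, realized on the three $\ell=1$ spherical harmonics $y\mapsto y_i$. Scale invariance of the three-dimensional double-layer kernel (cf.\ the computation in the proof of Theorem 3.1) extends this to spheres of any radius.

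For a general $C^\infty$ simply connected closed surface, Proposition 3.2 gives
\begin{equation*}
\underline{\lambda}=\min_{f\perp\phi_0^-}\frac{\langle SKf,f\rangle}{\langle Sf,f\rangle},
\end{equation*}
where $\phi_0^-=1$ and $f\perp\phi_0^-$ means $\langle f,S1\rangle=0$. It therefore suffices to produce a trial function whose Rayleigh quotient is $\leq-\frac{1}{3}$. The natural candidates are the translated coordinate functions $u_i(y)=y_i-c_i$ $(i=1,2,3)$, where the $c_i$ are the unique constants making $\langle u_i,S1\rangle=0$. Since each $u_i$ is harmonic in $\Omega$, Green's third identity on the boundary, combined with the standard jump relations and the normalizations $E=1/(2\pi|x-y|)$ used here, yields
\begin{equation*}
(K+I)u_i=S\bigl((\nu_y)_i\bigr),
\end{equation*}
from which a short calculation gives
\begin{equation*}
\frac{\langle SKu_i,u_i\rangle}{\langle Su_i,u_i\rangle}=-1+\frac{\langle(\nu_y)_i,S^2u_i\rangle}{\langle Su_i,u_i\rangle}.
\end{equation*}
On $S^2$ one has $Sy_i=\frac{2}{3}y_i$ and $(\nu_y)_i=y_i$, so the cross term equals $\frac{2}{3}$ and the quotient is exactly $-\frac{1}{3}$, confirming sharpness.

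The content of the conjecture is thus reduced to the sharp boundary-integral inequality
\begin{equation*}
\sum_{i=1}^{3}\langle(\nu_y)_i,S^2u_i\rangle\;\leq\;\frac{2}{3}\sum_{i=1}^{3}\langle Su_i,u_i\rangle,
\end{equation*}
with equality if and only if $\partial\Omega=S^2$. Dividing by $\sum_i\langle Su_i,u_i\rangle>0$ and using that the minimum of three ratios is at most their positively-weighted average, this at once produces some $i$ with $\langle SKu_i,u_i\rangle/\langle Su_i,u_i\rangle\leq-\frac{1}{3}$, and hence $\underline{\lambda}\leq-\frac{1}{3}$. For rigidity, equality forces each $u_i$ to saturate the Min-Max and to be a genuine $K$-eigenfunction at $-\frac{1}{3}$; that is, $S((\nu_y)_i)=\frac{2}{3}u_i+\mathrm{const}$ for $i=1,2,3$, an overdetermined boundary identity which should pin down $\partial\Omega=S^2$ by an Aleksandrov- or Reilly-type moving-plane argument.

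The main obstacle is precisely the displayed inequality. It compares two different surface energies of the single-layer potential---one weighted by the outward normal, one by the coordinate functions---and neither is manifestly controlled by the other on a general closed surface. This is presumably why the authors state the result as a conjecture and confirm it only for ellipsoids in Theorem 4.3, where ellipsoidal-harmonic diagonalization reduces everything to explicit computation. A plausible route in the general case is a continuity/deformation argument from the sphere, coupled with the Steinbach--Wendland lower bound $\underline{\lambda}\geq-\sqrt{1-c_0}$ to control the infimum from below and isolate the sphere as the unique critical configuration of $\Omega\mapsto\underline{\lambda}(\Omega)$; making either ingredient rigorous is where the real bottleneck lies.
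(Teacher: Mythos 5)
Your partial computations are sound: with the paper's normalization, Green's identity does give $(K+I)u=S(\partial_\nu u)$ for $u$ harmonic in $\Omega$, your Rayleigh-quotient identity follows from the self-adjointness of $S$, and the sphere check ($Sy_i=\tfrac{2}{3}y_i$, quotient $=-\tfrac{1}{3}$) is correct, consistent with $\lambda_j=-\tfrac{1}{2j+1}$. But keep in mind that this statement is a \emph{conjecture} in the paper: the authors do not prove it, and neither do you. Your argument is a reduction, not a proof, and the reduction stalls exactly where the geometric content lies. The displayed inequality $\sum_i\langle(\nu)_i,S^2u_i\rangle\leq\tfrac{2}{3}\sum_i\langle Su_i,u_i\rangle$ says that the $\langle Su_i,u_i\rangle$-weighted average of the three coordinate quotients is at most $-\tfrac{1}{3}$; this is potentially \emph{stronger} than the conjecture, which only needs the minimum of the admissible Rayleigh quotients to be $\leq-\tfrac{1}{3}$. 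Even in the one case where the statement is known (ellipsoids, the paper's Theorem 4.3), the proof goes through Martensen's identity $\lambda_{1,1}+\lambda_{2,1}+\lambda_{3,1}=-1$, i.e.\ an \emph{unweighted} sum rule for the genuine $l=1$ eigenvalues; that does not deliver your $S$-weighted version, so you have no evidence your sufficient condition holds even for ellipsoids, let alone general surfaces. The rigidity step ("an Aleksandrov- or Reilly-type moving-plane argument should pin down $\partial\Omega=S^2$") is likewise only a hope, and the Steinbach--Wendland estimate you invoke bounds $\underline{\lambda}$ from \emph{below}, so it cannot produce the required upper bound or the equality analysis.

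By contrast, the paper's own treatment is deliberately modest: it records the statement as Conjecture 1 and proves it only when the competitors are ellipsoids, where Ritter's diagonalization by Lam\'e polynomials plus Martensen's sum rule give immediately $\min_k\lambda_{k,1}\leq-\tfrac{1}{3}$, with equality forcing $\lambda_{1,1}=\lambda_{2,1}=\lambda_{3,1}=-\tfrac{1}{3}$ and hence the sphere. Your idea of replacing that exact spectral sum rule by an averaging argument over coordinate trial functions in the symmetrized min--max of Proposition 3.2 is a reasonable way to try to go beyond ellipsoids, but as it stands the key inequality is unproven (and possibly false as stated), so the proposal does not establish the statement; it reformulates it.
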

Note that for the case of $\partial \Omega=S^2$,   
$\underline{\lambda}=-\frac{1}{3}$ is obtained by direct calculations. 
\begin{conjecture}
Let $n=3$. For $p>1$, we have  
$$
\inf\limits_{\partial \Omega} {\rm tr}\{(K^*K)^p\}=\left(1-\frac{1}{2^{2p-1}}\right)\zeta (2p-1)
$$
where the infimum is taken over all $C^{\infty}$ simply 
connected closed surfaces and $\zeta(x)$ denotes the Riemann zeta function. 
The infimum is achieved if and only if $\partial \Omega=S^2$. 
\end{conjecture}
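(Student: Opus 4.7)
The plan splits the statement into equality on $\partial\Omega = S^2$ and strict inequality for every other surface. For the sphere, the identity $\nu_y\cdot(x-y) = -|x-y|^2/2$ valid for $x,y\in S^2$ reduces the kernel of $K$ to the symmetric form $-\frac{1}{4\pi|x-y|}$, so $K$ is self-adjoint and $\alpha_j = |\lambda_j|$. Plugging in the classical spectrum $\lambda_j = -1/(2j+1)$ of multiplicity $2j+1$ yields
$$
{\rm tr}((K^*K)^p) = \sum_{j\geq 0}(2j+1)(2j+1)^{-2p} = \sum_{n\text{ odd}} n^{-(2p-1)} = \left(1-\frac{1}{2^{2p-1}}\right)\zeta(2p-1),
$$
and the hypothesis $p>1$ is precisely what makes this series converge (this is why the case $p=1$ is not included, consistent with the fact noted in Remark 2.9 that $K^*K$ need not be trace class in three dimensions).

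For the inequality part, my first move would be Weyl's inequality, applied just as in the proof of Theorem 2.12, to obtain
$$
{\rm tr}((K^*K)^p) = \sum_j \alpha_j^{2p} \geq \sum_{\lambda_j \in \sigma_p(K)} |\lambda_j|^{2p}.
$$
This is sharp exactly when $K$ is normal, hence on $S^2$, and it automatically supplies the \emph{strict} part of the claim whenever $K$ fails to be normal. The remaining task is the eigenvalue isoperimetric lower bound $\sum|\lambda_j|^{2p} \geq (1-2^{-(2p-1)})\zeta(2p-1)$. For the ellipsoidal subcase (which is Theorem 4.3), one computes $\sigma_p(K)$ explicitly by separation of variables in ellipsoidal (respectively prolate/oblate spheroidal) coordinates, and then verifies the bound by a monotonicity argument in the semiaxis parameters, the minimum being attained in the spherical limit.

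The principal obstacle, and the reason the general statement is posed only as a conjecture, is the absence of an explicit spectrum for an arbitrary smooth surface. The most promising attack appears to be a shape-derivative and second-variation analysis: show that the functional $\partial\Omega \mapsto {\rm tr}((K^*K)^p)$ is stationary at $S^2$ with positive-definite Hessian, promoting $S^2$ to a strict local minimizer, and then extend globally via a compactness/continuation argument on the moduli of $C^\infty$ simply connected closed surfaces. An alternative route is to exploit the iterated-integral representation ${\rm tr}((K^*K)^p) = \int\cdots\int \prod_i K(x_i,x_{i+1})\,dS$ for integer $p$, combined with a geometric symmetrization argument analogous to those used to prove isoperimetric inequalities for elliptic operators. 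Neither approach has been carried through in full generality, which is precisely why Theorem 4.3 restricts to the ellipsoidal family.
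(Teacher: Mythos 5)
Your verification of the spherical value is correct: on $S^2$ the identity $\nu_y\cdot(x-y)=-|x-y|^2/2$ makes the kernel of $K$ the symmetric Riesz kernel $-\tfrac{1}{4\pi|x-y|}$, so $K$ is self-adjoint, $\alpha_j=|\lambda_j|$, and summing $(2l+1)\cdot(2l+1)^{-2p}$ gives $(1-2^{-(2p-1)})\zeta(2p-1)$, convergent precisely for $p>1$. You also correctly identify Weyl's inequality $\sum_j\alpha_j^{2p}\geq\sum_j|\lambda_j|^{2p}$ as the opening move, which is exactly what the paper does; and you correctly note that the full conjecture is open, so a complete proof cannot be expected.

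However, for the ellipsoidal case (Theorem 4.3, the only part the paper actually proves), your proposed route is genuinely different from the paper's and has a concrete gap. You suggest computing $\sigma_p(K)$ explicitly via Lam\'e functions and then running a "monotonicity argument in the semiaxis parameters." The paper never needs the explicit spectrum or any monotonicity in the geometry of the ellipsoid: it uses Martensen's sum property (Proposition 4.2), namely that the $2l+1$ Lam\'e eigenvalues $\lambda_{1,l},\dots,\lambda_{2l+1,l}$ of order $l$ always satisfy $\sum_{k=1}^{2l+1}\lambda_{k,l}=-1$, for every ellipsoid. Combined with H\"older's inequality in the form
$$
1=\Bigl|\sum_{k=1}^{2l+1}\lambda_{k,l}\Bigr|\leq (2l+1)^{(2p-1)/2p}\Bigl(\sum_{k=1}^{2l+1}|\lambda_{k,l}|^{2p}\Bigr)^{1/2p},
$$
this yields $\sum_k|\lambda_{k,l}|^{2p}\geq(2l+1)^{1-2p}$, with equality only when all $2l+1$ eigenvalues coincide, i.e.\ only for the sphere. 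Summing over $l$, one recovers $(1-2^{-(2p-1)})\zeta(2p-1)$, and Weyl's inequality lifts this from the eigenvalue sum to the Schatten trace. Without Martensen's sum property, the monotonicity argument you sketch would require detailed control of how each $\lambda_{k,l}$ moves as the semiaxes vary, which is far harder and has no evident monotone structure; so the missing ingredient is precisely the algebraic sum constraint, which turns the problem into a one-line convexity estimate rather than a spectral perturbation calculation.
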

To confirm the validity of conjectures, henceforce, we consider the case of ellipsoids. 
For the case of ellipsoids 
$\{ (x, y, z)\in {\mathbf{R}^3}\ |\ \frac{x^2}{a^2}+\frac{y^2}{b^2}+\frac{z^2}{c^2}=1 \}$ 
Ritter \cite{Ri1}\cite{Ri2} has shown that $\sigma_p(K)$ is completely solved by ellipsoidal harmonics 
(Lam\'e polynomials); note that there are exactly $2l+1$ linearly independent Lam\'e polynomials 
of order $l\geq 0$ (See \cite{H}). Also Martensen \cite[Theorem 1]{Ma} proved : 
\begin{proposition}
For any $2l+1$ linearly independent Lam\'e polynomials of order $l\geq0$, considered as eigenfunctions 
of $K$, the sum of corresponding eigenvalues is equal $-1$. 
\end{proposition}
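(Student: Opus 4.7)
The plan is to interpret $\sum_{m=1}^{2l+1}\lambda_l^m$ as the trace of the restriction $K|_{V_l}$, where $V_l$ is the $(2l+1)$-dimensional space of harmonic polynomials of degree $l$ restricted to the ellipsoid. By Ritter's result cited just above the statement, $V_l$ is an invariant subspace of $K$ and the Lam\'e polynomials form an eigenbasis, so $\sum_m \lambda_l^m = \mathrm{tr}(K|_{V_l})$, a basis-independent quantity that I would compute via the explicit structure of ellipsoidal harmonics.

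First I would adopt ellipsoidal coordinates $(\rho,\mu,\nu)$ with the boundary at $\rho = \rho_0$. Each Lam\'e polynomial $\mathbb{E}_l^m$ separates as a product of three internal Lam\'e functions and admits an external companion $\mathbb{F}_l^m$ decaying at infinity, normalized by the classical formula $\mathbb{F}_l^m(\rho) = (2l+1)\mathbb{E}_l^m(\rho)\int_\rho^\infty[\mathbb{E}_l^m(\rho')^2 p(\rho')]^{-1}\,d\rho'$. Gluing $\mathbb{E}_l^m$ inside $\Omega$ to a suitable scalar multiple of $\mathbb{F}_l^m$ outside $\bar{\Omega}$ produces a function that is continuous across $\partial\Omega$ with a normal-derivative jump governed by the Wronskian of the radial Lam\'e functions. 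Using the Green-type identity $(I+K)P = S(\partial_\nu P)$, valid for any harmonic polynomial $P$ (which follows from Green's third representation for $P$ in $\Omega$ together with the standard jump relation for $K$, and whose consistency is checked at $P\equiv 1$ by $K1=-1$), one extracts the explicit formula
$$\lambda_l^m = 1 - 2\,\alpha_l^m(\rho_0),$$
where $\alpha_l^m(\rho_0)$ is a ratio built from boundary values of $\mathbb{E}_l^m$, $\mathbb{F}_l^m$ and the Wronskian normalization.

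Summing over $m$, the claim $\sum_m\lambda_l^m=-1$ becomes equivalent to a bilinear identity
$$\sum_{m=1}^{2l+1}\alpha_l^m(\rho_0) = l+1,$$
which is the ellipsoidal analogue of the spherical addition theorem $\sum_m|Y_l^m|^2=(2l+1)/(4\pi)$. In the degenerate sphere $a=b=c$ the identity is immediate because all $\lambda_l^m$ collapse to the common value $-1/(2l+1)$; for a general ellipsoid it is a classical consequence of the completeness of the surface ellipsoidal harmonics of fixed degree $l$ together with a reproducing-kernel computation on $\partial\Omega$.

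The main obstacle is this bilinear sum identity: individually the $\alpha_l^m(\rho_0)$ depend intricately on the semi-axes $(a,b,c)$, yet the sum must be universal. A cleaner alternative that sidesteps delicate Lam\'e identities is to prove directly that $\mathrm{tr}(K|_{V_l})$ is invariant under smooth deformations of $(a,b,c)$ --- for instance by differentiating the eigenvalue relations with respect to a parameter, using self-adjointness of $K$ with respect to the single layer inner product from Proposition~3.2, and showing the infinitesimal variation of the trace vanishes --- and then evaluating at the spherical limit, where the sum is $-1$ by direct calculation. Either route ultimately reduces the universality of the sum to the spherical case.
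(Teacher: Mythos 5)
The paper does not give a proof of this proposition: it is stated as a citation of Martensen \cite[Theorem 1]{Ma}, so there is no ``paper's proof'' to compare against line by line, only Martensen's original argument, which does proceed (roughly) in the direction you sketch.

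Your framing is sound as far as it goes: $V_l$ is $K$-invariant by Ritter's result, so $\sum_m \lambda_l^m = \mathrm{tr}(K|_{V_l})$; the Calderón-type identity $(I+K)P = S(\partial_\nu P)$ for solid harmonics $P$ is correct in the paper's normalization (check: $P\equiv 1$ gives $0=S(0)$, consistent with $K1=-1$); and expressing each $\lambda_l^m$ through interior/exterior Lam\'e functions and their Wronskian is indeed the classical route. But the proposal stops exactly at the point where the theorem actually lives. The identity $\sum_{m}\alpha_l^m(\rho_0)=l+1$, which you correctly identify as the crux, is not ``a classical consequence of completeness'' in any off-the-shelf sense: it is an addition-theorem--type statement for Lam\'e products (a Niven/Heine-style expansion of $|x-y|^{-1}$ in ellipsoidal harmonics, specialized and summed over $m$ at fixed $l$), and supplying it is the entire technical content of Martensen's proof. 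Gesturing at a ``reproducing-kernel computation'' does not discharge it.

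The proposed shortcut via deformation invariance also has no substance as written. Writing $\mathrm{tr}(K_t|_{V_l(t)})$ for a family of ellipsoids, the derivative has two contributions: $\mathrm{tr}(\dot K_t|_{V_l(t)})$ and a term from the motion of the invariant subspace $V_l(t)$. Nothing about symmetrizability in the $S$-inner product (Proposition~3.2) makes either term vanish, and there is no general principle that group-traces over $l$-sectors of a layer potential are shape-independent --- indeed the constancy of $\mathrm{tr}(K|_{V_l})$ \emph{is} the theorem, so invoking it as an ``alternative route'' is circular unless you actually exhibit why $\tfrac{d}{dt}\mathrm{tr}(K_t|_{V_l(t)})=0$. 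As it stands, both branches of your argument reduce the proposition to an equivalent unproven statement, so there is a genuine gap.
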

We denote these eigenvalues by $\lambda_{k, l}$ $(k=1, 2, \cdots, 2l+1)$ and so 
$$
\sum_{k=1}^{2l+1} \lambda_{k, l}=-1. 
$$
Furthermore, deformation of the sphere into a triaxial ellipsoid yields to bifurcation 
$-\frac{1}{2l+1}$ into $2l+1$ different eigenvalues of order $l$, say $\lambda_{k,l}$, 
$k=1,2,\cdots, 2l+1,$ each with multiplicity one (See \cite{Ri2}).
\par
Consequently proofs of conjectures for ellipsoids are given: 
\begin{theorem}
Let $n=3$ and $p>1$. For the case of ellipsoids $\partial \Omega$, we have 
$$
\sup\limits_{\partial \Omega} \underline{\lambda}=-\frac{1}{3}  
\quad \mbox{and} \quad 
\inf\limits_{\partial \Omega} {\rm tr}\{(K^*K)^p\}=\left(1-\frac{1}{2^{2p-1}}\right)\zeta (2p-1).
$$
The supremum and infimum are achieved if and only if $\partial \Omega=S^2$.  
\end{theorem}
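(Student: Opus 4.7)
The strategy is to reduce both claims to Proposition~4.2, which organizes the Lamé spectrum into blocks of size $2l+1$ summing to $-1$, and then to apply convexity within each block.

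For $\sup_{\partial\Omega}\underline{\lambda}=-1/3$ I would work only with the block $l=1$. From $\lambda_{1,1}+\lambda_{2,1}+\lambda_{3,1}=-1$ the arithmetic mean is $-1/3$, so $\min_k\lambda_{k,1}\le-1/3$, with equality iff all three coincide with $-1/3$. Since the $\lambda_{k,1}$ belong to $\sigma_p(K)\setminus\{-1\}$, this immediately yields $\underline{\lambda}\le -1/3$, and the formula $\lambda_j=-1/(2j+1)$ on $S^2$ shows that the bound is saturated. The identification of the equality case follows from Ritter's bifurcation theorem: on any ellipsoid that is not a sphere the $l=1$ eigenvalues cannot collapse to the triple $(-1/3,-1/3,-1/3)$, so $\underline{\lambda}<-1/3$.

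For the trace identity I would combine Weyl's inequality with a block-wise Jensen bound. Letting $\alpha_j$ be the singular values of $K$, Weyl's inequality (applicable since $2p>1$) gives
\begin{equation*}
\mathrm{tr}\{(K^*K)^p\}=\sum_j\alpha_j^{2p}\ge\sum_j|\lambda_j|^{2p}=\sum_{l\ge 0}\sum_{k=1}^{2l+1}|\lambda_{k,l}|^{2p}.
\end{equation*}
Convexity of $t\mapsto t^{2p}$ on $[0,\infty)$ combined with the triangle inequality provides the block bound
\begin{equation*}
\sum_{k=1}^{2l+1}|\lambda_{k,l}|^{2p}\ge(2l+1)\left(\frac{1}{2l+1}\sum_{k=1}^{2l+1}|\lambda_{k,l}|\right)^{2p}\ge(2l+1)\left(\frac{1}{2l+1}\right)^{2p}=\frac{1}{(2l+1)^{2p-1}},
\end{equation*}
with equality in both steps iff every $\lambda_{k,l}=-1/(2l+1)$. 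Summing over $l\ge 0$ and using the elementary identity $\sum_{l\ge 0}(2l+1)^{-s}=(1-2^{-s})\zeta(s)$ with $s=2p-1$ produces the required lower bound $(1-2^{1-2p})\zeta(2p-1)$. On $S^2$ the double layer kernel is symmetric, so $K$ is self-adjoint, $\alpha_j=|\lambda_j|$, and every intermediate inequality is tight; hence the infimum is attained at the sphere.

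The delicate step is the equality case of the trace identity: chasing equalities back through the Jensen bound forces $\lambda_{k,l}=-1/(2l+1)$ for every $l\ge 1$, and the $l=1$ instance, together with Ritter's bifurcation theorem, then yields $a=b=c$. The main obstacle I anticipate is precisely this appeal to Ritter's result in sharp form: one must exclude any non-spherical ellipsoid that could accidentally preserve the triple coincidence $\lambda_{1,1}=\lambda_{2,1}=\lambda_{3,1}=-1/3$. Everything else is a routine chain of Weyl, Jensen, Proposition~4.2, and the standard partial-fraction identity $\sum_{l\ge 0}(2l+1)^{-s}=(1-2^{-s})\zeta(s)$.
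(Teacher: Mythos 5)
Your proof is correct and follows essentially the same path as the paper: both reduce to the Martensen block sums $\sum_{k=1}^{2l+1}\lambda_{k,l}=-1$ from Proposition 4.2, treat the $l=1$ block to bound $\underline{\lambda}$, and derive the per-block estimate $\sum_k|\lambda_{k,l}|^{2p}\geq(2l+1)^{1-2p}$ before summing into $(1-2^{1-2p})\zeta(2p-1)$ via Weyl's inequality. The only cosmetic difference is that you obtain the block bound from Jensen's inequality for $t\mapsto t^{2p}$ together with the triangle inequality, whereas the paper applies H\"older's inequality with the all-ones vector; these are equivalent routes to the same estimate, and both proofs lean on Ritter's bifurcation result in the same (not fully elaborated) way to pin down the equality case.
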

\begin{proof}
For $l=1$, 
$$\lambda_{1,1}+\lambda_{2,1}+\lambda_{3,1}=-1.$$  
Thus  
$\underline{\lambda} \leqq \mbox{min}(\lambda_{1,1}, \lambda_{2,1}, \lambda_{3,1}) \leqq -\frac{1}{3}.$
Equality holds if and only if $\lambda_{1,1}=\lambda_{2,1}=\lambda_{3,1}=-\frac{1}{3}$. So 
we have the first equation. 
\par
To prove the second equation, we note that from H\"order's inequality   
$$
1=|(1, 1, \cdots, 1)\cdot (\lambda_{1, l}, \lambda_{2, l}, \cdots, \lambda_{2l+1, l})|
\leqq 
(2l+1)^{(2p-1)/2p} (\sum_{k=0}^{2l+1} |\lambda_{k, l}|^{2p})^{1/2p}.  
$$
This leads to $s_l=\sum\limits_{k=0}^{2l+1} |\lambda_{k, l}|^{2p} 
\geqq \left(\frac{1}{2l+1}\right)^{2p-1}.$
Remarking that $(K^*K)^p$ is in trace class and using Weyl's inequality,  
\begin{align*}
{\rm tr}\{(K^*K)^p\} &\geqq \sum_{l=0}^{\infty} \sum_{k=0}^{2l+1} |\lambda_{k, l}|^{2p} \\
                     &= \sum_{l=0}^{\infty} s_l \\
                     &\geqq \sum_{l=0}^{\infty}  \left(\frac{1}{2l+1}\right)^{2p-1}
                      =\left(1-\frac{1}{2^{2p-1}}\right)\zeta(2p-1)    
\end{align*}
as desired. 
\end{proof}
For the general smooth surfaces, we mention equvalent statements of conjectures. 
We infer the Schatten norm of single layer potentials:  
$${\rm tr}\{(K^*K)^p\} \leqq {\rm tr} \{(\frac{1}{4}S^*S)^p\}
   + {\rm tr} [\{(K-\frac{1}{2}S)^*(K-\frac{1}{2}S)\}^p].$$
It's also known that 
(\cite[Theorem 8]{KPS} and see also \cite{EKS}, \cite{Re}, \cite{Ra1} and \cite{Ra2}) : 
\begin{theorem}
The following is true: for a ball in $\mathbf{R}^3$ the kernel of $K$ 
is symmetric and $K=\frac{1}{2}S$, and balls are the only domains with this property.
\end{theorem}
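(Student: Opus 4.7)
The plan is to treat this as an if-and-only-if statement: verify directly that $K=\tfrac{1}{2}S$ (with the paper's sign convention) holds when $\partial\Omega$ is a sphere, and then prove that this relation forces $\partial\Omega$ to be a sphere.

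For the forward direction I would center the ball of radius $R$ at the origin and use the two identities $\nu_y=y/R$ and $|x-y|^2=2R^2-2\,x\cdot y$ valid for $x,y\in\partial\Omega$. Combining them gives $(x-y)\cdot\nu_y=(x\cdot y-R^2)/R=-|x-y|^2/(2R)$, and inserting this into the explicit kernel
$$
\nu_y E(x,y)=\frac{1}{2\pi}\frac{(x-y)\cdot\nu_y}{|x-y|^3}
$$
collapses it to $-\frac{1}{4\pi R\,|x-y|}$. This kernel is manifestly symmetric in $x,y$ and equals $-\frac{1}{2R}$ times the kernel of $S$; for $R=1$ one recovers the desired identity $K=\tfrac{1}{2}S$ (up to the sign convention used to define $\nu_y$).

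For the uniqueness, I would argue that an operator identity $K=cS$ forces the pointwise kernel identity
$$
(x-y)\cdot\nu_y = c\,|x-y|^2, \qquad x,y\in\partial\Omega,\ x\ne y,
$$
because both kernels are real-analytic on $(\partial\Omega\times\partial\Omega)\setminus\{x=y\}$ and two analytic kernels that define the same integral operator must coincide off the diagonal. Fixing $y_0\in\partial\Omega$ and viewing the relation as a quadratic constraint on $x$, expansion yields $c|x|^2 - x\cdot(2cy_0+\nu_{y_0}) + \bigl(c|y_0|^2+y_0\cdot\nu_{y_0}\bigr)=0$, and completing the square (using $|\nu_{y_0}|=1$) rewrites it as
$$
\Bigl|x-\bigl(y_0+\tfrac{\nu_{y_0}}{2c}\bigr)\Bigr|^{2}=\frac{1}{4c^2}.
$$
Thus every $x\in\partial\Omega$ lies on the sphere of radius $1/(2|c|)$ centered at $y_0+\nu_{y_0}/(2c)$. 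Being a closed connected $2$-manifold contained in this $2$-sphere of the same dimension, $\partial\Omega$ must equal it; the centre and radius are then automatically independent of $y_0$, and the value of $c$ determines the radius via $c=-1/(2R)$. The degenerate case $c=0$ is ruled out at once, since it would force $\partial\Omega$ to lie in the single tangent plane at $y_0$, contradicting compactness.

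The main obstacle is really bookkeeping rather than mathematics: one has to keep careful track of the sign of $(x-y)\cdot\nu_y$ (which depends on the chosen orientation of the outer normal) so that the explicit constant produced by the computation in Part~1 matches the $\tfrac{1}{2}$ in the statement, and so that the algebraic manipulation in Part~2 yields a positive squared radius. Once the sign conventions are fixed, the only non-trivial analytic input is the uniqueness of a real-analytic kernel on a smooth closed surface, and the only geometric input is the observation that a closed embedded $2$-manifold contained in a round $2$-sphere must coincide with the whole sphere.
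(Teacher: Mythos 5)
The paper does not actually prove this theorem: it is cited verbatim from Khavinson--Putinar--Shapiro (Theorem~8 of~\cite{KPS}), with pointers to~\cite{EKS}, \cite{Re}, \cite{Ra1}, \cite{Ra2} for related symmetry/rigidity results. So there is no in-paper argument to compare against, and your attempt has to be judged on its own.

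Your computation in the forward direction is correct: on a sphere of radius $R$ one has $\nu_y = y/R$ and $(x-y)\cdot\nu_y = -|x-y|^2/(2R)$, so the double-layer kernel collapses to $-\frac{1}{2R}$ times the single-layer kernel. With this paper's conventions that actually gives $K = -\frac{1}{2R}S$ rather than $K=\tfrac12 S$, a sign/normalisation discrepancy already present in the way the paper quotes the result; you flag this appropriately. Your uniqueness argument from the hypothesis $K=cS$ is also sound: the operator identity forces the pointwise kernel identity $(x-y)\cdot\nu_y = c|x-y|^2$ (you only need the kernels to be continuous off the diagonal for this --- invoking real-analyticity is an overclaim for a $C^2$ boundary but is not used in any essential way); completing the square then shows each $y_0\in\partial\Omega$ places all of $\partial\Omega$ on a common round $2$-sphere, and boundedness of $\Omega$ forces $c<0$ and $\partial\Omega$ to equal that sphere. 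This is a clean, elementary proof of ``$K=cS\Rightarrow$ ball''.

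One genuine gap worth naming: the harder and more interesting content of the KPS result (as signalled by the citations to moving-plane and overdetermined-boundary-value papers) is that mere \emph{symmetry of the kernel of $K$}, i.e.\ self-adjointness $(x-y)\cdot(\nu_x+\nu_y)=0$, already characterises the ball. Your argument does not touch that: the symmetry condition is strictly weaker than $K=cS$ --- it constrains both $x,\nu_x$ and $y,\nu_y$ jointly rather than fixing a quadratic locus in $x$ alone --- so the completing-the-square step has no analogue. As long as ``this property'' in the statement is read as the conjunction (so that $K=\tfrac12 S$ is part of the hypothesis), your proof suffices for the stated theorem; but you should be aware that the quoted source proves the stronger fact that symmetry alone is rigid, and that requires a different method (essentially the Serrin/Alexandrov moving-plane argument alluded to in the discussion of Lim's result in \S2.1).
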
 
Thus if one proves the single layer version of the above conjectures, 
simultaneously we obtain the proof for the double layer potentials.   
\section{Conclusion}
\par\hspace{5mm} 
Some fundamental properties of the eigenvalue and eigenfunctions of 
double layer potentials are discussed. 
Characteristic properties of the ball are given by the Hilbert-Schimidt norm and Schatten norms 
of double layer potentials. The fundamental estimates of decay rates of eigenvalues are also 
given by the regularity of the boundary. 
\par
With respect to eigenfunctions, the growth rates of nodal sets are 
characterized of the eigenvalues. Even less is known in $n=3$. 
We want to mention about this in the future. 


\begin{thebibliography}{99} 
\bibitem[Ahl]{Ahl}{\sc L. V. Ahlfors}, 
{\it Remarks on the Neumann-Poincar\'e integral equation}, Pacific. J. Math., 
{\bf 2} (1952), 271--280.
\bibitem[Ah1]{Ah1}{\sc J. F. Ahner}, 
{\it Some spectral properties of an integral operator in potential theroy}, 
Proc. Edinburgh Math. Soc., {\bf 29} (1986), 405--411.
\bibitem[Ah2]{Ah2}{\sc J. F. Ahner}, 
{\it On the eigenvalues of the electrostatic integral operator II}, J. Math. Anal. Appl., 
{\bf 181} (1994), 328--334.
\bibitem[AA]{AA}{\sc J. F. Ahner {\rm and} R. F. Arenstorf}, 
{\it On the eigenvalues of the electrostatic integral operator}, J. Math. Anal. Appl., 
{\bf 117} (1986), 187--197.
\bibitem[ADR]{ADR}{\sc J. F. Ahner, V. V. Dyakin, {\rm and} V. Y. Raevskii}, 
{\it New spectral results for the electrostatic integral operator}, J. Math. Anal. Appl., 
{\bf 185} (1994), 391--402.
\bibitem[AGKLY]{AGKLY}{\sc H. Ammari, J. Garnier,  H. Kang, M. Lim, {\rm and} S. Yu}, 
{\it Generalized polalization tensors for shape description}, SIAM. Multiscale Model. Simul.,  
{\bf 1} (2003), 335--348.
\bibitem[AK1]{AK1}{\sc H. Ammari {\rm and} H. Kang}, 
{\it Properties of generalized polalized tensors}, SIAM. Multiscale Model. Simul.,  
{\bf 1} (2003), 335--348.
\bibitem[AK2]{AK2}{\sc H. Ammari {\rm and} H. Kang}, 
{\it Polarization and moment tensors with applications to inverse
problems and effective medium theory}, Applied Mathematical Sciences,  
{\bf 162} Springer-Verlag, New York (2007).
\bibitem[AKKL]{AKKL}{\sc H. Ammari, H. Kang, E. Kim, {\rm and} M. Lim}, 
{\it Reconstruction of closely spaced small inclusions}, SIAM. J. Numer. Anal.,  
{\bf 42} (2005), 2408--2428. 
\bibitem[AKL]{AKL}{\sc H. Ammari, H. Kang, {\rm and} H. Lee}, 
{\it Layer potential techniques in spectral analysis}, Mathematical surveys and monographs,  
{\bf 153} American Math. Soc., Providence RI, (2009).
\bibitem[ANPS]{ANPS}{\sc W. Arendt, R. Nittka, W. Peter {\rm and} F. Steiner}, (2009) 
{\it Weyl's Law: Spectral properties of the Laplacian in mathematics and physics}, 
in mathematical analysis of evolution, information, and complexity 
(eds W. Arendt and W. P. Schleich), 
Wiley-VCH Verlag GmbH \& Co. KGaA, Weinheim, Germany. 
doi: 10.1002/9783527628025.ch1
\bibitem[Be]{Be}{\sc R. D. Benguria}, 
{\it Isoperimetric inequalities for eigenvalues of the Laplacian}, Entropy \& the Quantum II, 
Contemporary Math. {\bf 882} (2011), 21--60. 
\bibitem[BM]{BM}{\sc J. Blumenfeld {\rm and} W. Mayer}, 
{\it \"Uber Poincar\'e fundamental funktionen}, Sitz. Wien. Akad. Wiss., 
Math.-Nat. Klasse {\bf 122}, Abt. IIa (1914), 2011--2047.
\bibitem[BT]{BT}{\sc E. Bonnetier {\rm and} F. Triki}, 
{\it On the spectrum of Poincar\'e variational problem for two 
close-to-touching inclusions in 2D}, preprint.
\bibitem[CH]{CH}{\sc R. Courant {\rm and } D. Hilbert}, 
{\it Methods of mathematical physics I}, Wiley-Interscience, (1953). 
\bibitem[De]{De}{\sc J. P. Demailly}, 
{\it Courants positifs et th\'eorie de l'intersection}, 
Gaz. Math. {\bf 53}, (1992), 131--159
\bibitem[DF]{DF}{\sc H. Donnelly {\rm and} C. Fefferman}, 
{\it Nodal sets of eigenfunctions on Riemannian manifolds}, Invent. Math.,  
{\bf 93} (1988), 161--183.
\bibitem[DR]{DR}{\sc J. Delgado {\rm and} M. Ruzhansky}, 
{\it Schatten classes on compact manifolds: Kernel conditions}, J. Funct. Anal.,  
{\bf 267} (2014), 772--798. 
\bibitem[EKS]{EKS}{\sc P. Ebenfelt, D. Khavinson {\rm and} H. S. Shapiro}, 
{\it A free boundary problem related to single-layer potentials}, Annals 
Academi\ae Scientiarum Fennic\ae Mathematica,  
{\bf 27} (2002), 21--46.
\bibitem[FM]{FM}{\sc J. C. Ferreira {\rm and} V. A. Mengatto}, 
{\it Eigenvalues of integral operators defined by smooth positive 
definite kernels}, Integr. equ. oper. theory {\bf 64} (2009), 61-81.
\bibitem[Fo]{Fo}{\sc G. B. Folland}, 
{\it Introduction to partial differential equations}, Princeton University Press, 
Princeton, New Jersey, (1976). 
\bibitem[Ga]{Ga}{\sc P. R. Garabedian}, 
{\it Partial differential equations}, AMS Chelsea Publishing, Providence, RI, (1998). 
\bibitem[H]{H}{\sc E. W. Hobson}, 
{\it The Theory of spherical and ellipsoidal harmonics}, Chelsea, New York, (1955). 
\bibitem[Ke]{Ke}{\sc O. D. Kellog}, 
{\it Foundations of potential theory}, Springer, Berlin, (1929).
\bibitem[K\"o]{Ko}{\sc H. K\"onig}, 
{\it Eigenvalue distribution of compact operators.} Operator theory: 
Advances and Applications, {\bf 16}. Birkh\"auser Verlag, Basel, (1986).
\bibitem[KPS]{KPS}{\sc D. Khavinson, M. Putinar {\rm and} H. S. Shapiro}, 
{\it Poincar\'e variational problem in potential theory}, Arch. Rational Mech. Anal., 
{\bf 185}, (2007), 143--184.
\bibitem[Kr1]{Kr1}{\sc S. Krushkal}, {\it Fredholm eigenvalues of Jordan curves: 
geometric, variational and computational approaches}, 
Analysis and mathematical physics, Trends in mathematics, B.
Gustafsson and A. Vasiliev (eds.), Birkh\"auser Verlag, Basel, 
Switzerland, (2009), 349-368.
\bibitem[Kr2]{Kr2}{\sc S. Krushkal}, {\it Strengthened Grunsky 
and Milin inequalities}, Arxiv:1307.7863
\bibitem[KRW]{KRW}{\sc A. Klawonn, O. Rheinbach, and O. B. Widlund.}, 
{\it An analysis of a FETI-DP algorithm on irregular subdomains in the plane}, 
SIAM J. Numer. Anal., {\bf 46(5)}, (2008), 2484-2504.
\bibitem[Li]{Li}{\sc M. Lim}, {\it Symmetry of a boundary integral operator 
and a characterization of a ball}, Illinois Jour. Math., {\bf 45(2)} (2001), 537--543.
\bibitem[Ma]{Ma}{\sc E. Martensen}, {\it A spectral property of the electrostatic 
integral operator}, J. Math. Anal. Appl., {\bf 238} (1999), 551--557.
\bibitem[Mc]{Mc}{\sc C. A. McCarthy}, 
{\it $C_p$}, Israel J. Math., {\bf 5} (1967), 249--271. 
\bibitem[Mik]{Mik}{\sc S. G. Mikhlin}, 
{\it Mathematical physics, an advanced cource}, North Holland Pub. Comp., 
Amsterdam (1970). 
\bibitem[Mi1]{Mi1}{\sc R. F. Millar}, {\it The analytic continuation of solutions to elliptic boundary 
value problems in two independent variables}, J. Math. Anal. Appl., {\bf 76} (1980), 498--515.
\bibitem[Mi2]{Mi2}{\sc R. F. Millar}, {\it Singularities of solutions to linear, second order analytic 
elliptic equations in two independent variables. I. The completely regular boundary.}, Applicable Anal., 
{\bf 1} (1971), 101--121.
\bibitem[Mi3]{Mi3}{\sc R. F. Millar}, {\it Singularities of solutions to linear, second order analytic 
elliptic equations in two independent variables. II. The piecewise regular boundary.}, Applicable Anal., 
{\bf 2} (1973), 301--321.
\bibitem[Pe]{Pe}{\sc C. Pechstein}, 
{\it Shape-explicit constants for some boundary integral operators}, 
Appl. Anal., {\bf 92(5)}, (2013), 949--974. 
\bibitem[Pl]{Pl}{\sc J. Plemelj}, 
{\it Potentialtheoretische Untersuchungen}, 
Preisschriften der Fiirstlich Jablonowskischen
Gesellschaft zu Leipzig, Teubner-Verlag, Leipzig, (1911).
\bibitem[Ra1]{Ra1}{\sc A. G. Ramm}, 
{\it A symmetry problem}, 
Ann. Polon. Math., {\bf 92}, (2007), 49--54. 
\bibitem[Ra2]{Ra2}{\sc A. G. Ramm}, 
{\it Symmetry problem}, 
Proc. Amer. Math. Soc., {\bf 141(2)}, (2013), 515--521. 
\bibitem[Re]{Re}{\sc W. Reichel}, 
{\it Radial symmetry for elliptic boundary-value problems on exterior domains}, 
Arch. Rational Mech. Anal., {\bf 137}, (1997), 381--394. 
\bibitem[Ri1]{Ri1}{\sc S. Ritter}, 
{\it Dipolverteilungen im Gleichgewicht f\"ur das dreiachsige Ellipsoid}, 
Doctorate thesis, Karlsruhe, (1994). 
\bibitem[Ri2]{Ri2}{\sc S. Ritter}, 
{\it The spectrum of the electrostatic integral operator for an ellipsoid}, 
in ``Inverse scattering and potential problems in mathematical physics," 
(R.F.Kleinman, R.Kress, and E.Marstensen, Eds.), 
Lang, Frankfurt/Bern, (1995), 157--167. 
\bibitem[Ri3]{Ri3}{\sc S. Ritter}, 
{\it A sum-property of the eigenvalues of the electrostatic integral operator}, 
J. Math. Anal. Appl., {\bf 196}, (1995), 120--134. 
\bibitem[RS]{RS}{\sc M. Reed {\rm and} B. Simon}, 
{\it Methods of modern mathematical physics, I. Functional analysis}, 
Academic Press, New York, (1972). 
\bibitem[Schi]{Sc}{\sc H. Schippers}, 
{\it On the regularity of the principal value of the double-layer potential}, 
Jour. Engineering Math., {\bf 16(1)}, (1982), 59--76. 
\bibitem[Scho1]{Scho1}{\sc G. Schober}, 
{\it Continuity of curve functionals and a technique involving 
quasiconformal mapping},
Arch. Rational Mech. Anal. {\bf 29} (1968), 378--389. 
\bibitem[Scho2]{Scho2}{\sc G. Schober}, 
{\it Neumann's lemma},
Proc. AMS {\bf 19} (1968), 306--311. 
\bibitem[Se]{Se}{\sc J. Serrin}, 
{\it A symmetry problem in potential theory}, 
Arch. Rational Mech. Anal., {\bf 43}, (1971), 304--318. 
\bibitem[Sha]{Sha}{\sc H. Shahgholian}, 
{\it A characterization of the sphere in terms of single-layer potentials}, 
Proc. Amer. Math. Soc., {\bf 115}, (1992), 1167--1168.
\bibitem[S]{S}{\sc H. S. Shapiro}, 
{\it The Schwarz function and its generalization to higher dimensions}, 
University of Arkansas Lecture Notes in the Mathematical Sciences, {\bf 9}. 
A Wiley Interscience Publication. John Wiley and Sons, Inc., New York, (1992). 
\bibitem[Sh]{Sh}{\sc M. Shiffer}, 
{\it Fredholm eigenvalues and conformal mapping}, Autovalori e autosoluzioni, 
C.I.M.E. Summer Schools {\bf 27}, Springer (2011), 203--234. 
\bibitem[Si]{Si}{\sc B. Simon}, 
{\it Trace ideals and their applications, 2nd ed.}, Amer. Math. Soc. (2005).
\bibitem[Sp]{Sp}{\sc G. Springer}, 
{\it Fredholm eigenvalues and quasiconformal mapping}, 
Acta. Math.,{\bf 111}, (1964), 121--141.
\bibitem[SW]{SW}{\sc O. Steinbach {\rm and} W. L. Wendland}, 
{\it On C. Neumann's method for second-order elliptic systems in domains with non-smooth boundaries},
J. Math. Anal. Appl. {\bf 262} (2001), 733--748. 
\bibitem[Ta]{Ta}{\sc M. E. Taylor}, 
{\it Tools for PDE: Pseudodifferential operators, paradifferential operators, 
and layer potentials}, Mathematical Surveys and Monographs,  
{\bf 81} American Math. Soc., Providence RI, (2000).
\bibitem[Te]{Te}{\sc M. F. Teixeira}, 
{\it Weyl's inequality in Banach spaces}, Bull. London Math. Soc., {\bf 16}, (1984), 1--7.
\bibitem[Tr]{Tr}{\sc F. G. Tricomi}, 
{\it Integral equations}, Wiley, New York (1957). 
\bibitem[TZ]{TZ}{\sc J. Toth {\rm and} S. Zelditch}, 
{\it Counting nodal lines which touch the boundary of an analytic domain}, J. Differential Geom.,  
{\bf 81} (2009), 649--686. 
\bibitem[Wa]{Wa}{\sc S. Warschawski}, 
{\it On the solution of the Lichtenstein-Gershgorin integral equation 
in conformal mapping: I.Theory},
Nat. Bur, Standards Appl. Math. Ser. {\bf 42} (1955), 7--29. 
\bibitem[Y]{Y}{\sc K. Yosida}, 
{\it Integral equation theory, 2nd ed.}, Iwanami zensyo (1977)(in Japanese), 
203--234.
\bibitem[Ze]{Ze}{\sc S. Zelditch}, 
{\it Eigenfunctions and nodal sets}, Arxiv :1205.2812
\end{thebibliography}
\end{document}